\newcommand{\R}{\mathbb{R}}
\newcommand{\M}{\mathcal{M}}
\newcommand{\norm}[1]{\left\|{#1}\right\|}
\newcommand{\abs}[1]{\left|{#1}\right|}
\newcommand{\dc}[1]{\mathopen{\scalebox{1.0}{$\lbrace\!\mkern-1mu\lbrace$}}#1\mathclose{\scalebox{1.0}{$\rbrace\!\mkern-1mu\rbrace$}}}
\newcommand{\jump}[1]{\ensuremath{\left\llbracket #1\right\rrbracket}}
\newcommand{\keywords}[1]{\textbf{Keywords:} #1}
\newtheorem{defn}{Definition}[section]
\newtheorem{thm}[defn]{Theorem}
\newtheorem{lemma}[defn]{Lemma}
\newtheorem{prop}[defn]{Proposition}
\newtheorem{remark}[defn]{Remark}
\newtheorem{ass}[defn]{Assumption}
\newtheorem{cond}[defn]{Condition}
\title{A posteriori error control for a finite volume scheme for a cross-diffusion model of ion transport}
\date{}
\begin{document}
	\author[1]{Arne Berrens\thanks{Corresponding author; berrens@mathematik.tu-darmstadt.de}}
	\author[1]{Jan Giesselmann\thanks{giesselmann@mathematik.tu-darmstadt.de}}
	\affil{\centering Department of Mathematics\\ Technical University of Darmstadt\\ Dolivostr. 15, 64293 Darmstadt, Germany}
	\maketitle
\begin{abstract}
	We derive a reliable a posteriori error estimate for a cell-centered finite volume scheme approximating a cross-diffusion system modeling ion transport through nanopores.
	To this end, we derive a stability framework that is independent of the numerical scheme and introduce a suitable (conforming) reconstruction of the numerical solution.
	The stability framework relies on some simplifying assumptions that coincide with those made in weak uniqueness results for this system.
	Additionally, when electrical forces are present, we assume that the solvent concentration is uniformly bounded from below.
	This is the first a posteriori error estimate for a cross-diffusion system.
	Along the way, we derive a pointwise a posteriori error estimate for a finite volume scheme that approximates the diffusion equation.
	We conduct numerical experiments showing that the error estimator scales with the same order as the true error.
\end{abstract}
	\keywords{cross-diffusion; ion transport; finite-volume approximation; a posteriori error estimates; diffusion equation}\\
	\textbf{AMS subject classifications} (2020): 35K65, 35K51, 65M08, 35K40, 65M15
\section{Introduction}
\subsection{Ion Transport Model}
We consider the ion transport model presented in \textcite{burger2012}.
Similar to the Poisson-Nernst-Planck equations given by \textcite{nernst1888}, it models ionic species in a electrically neutral solvent.
In contrast to the Poisson-Nernst-Planck equations, the ion transport model incorporates forces due to finite size of every particle.
In essence, this means that the transport of ions is affected by size exclusion effects.
This comes into play when modeling ion transport through nanopores or membranes, i.e. when the size of an individual ion is not negligible compared to the channel size.
Due to size exclusion, different ionic species influence their diffusion and Fick's law is no longer true in small geometries (see \cite{simpson2009,burger2012,massimini2024} and references therein).
Therefore, the ion transport model incorporates cross-diffusion terms other than the classical Poisson-Nernst-Planck equations.
Furthermore, the ion transport model incorporates the solvent concentration and the convection with it, i.e. the ion particles are dragged along with the movement of the solvent.
Incorporating the solvent concentration leads to the volume filling condition, namely that the volume fraction of the solvent is the remaining volume that is not occupied by the ions.
This is modeled in the second equation in \eqref{equations}.\\
In general, cross-diffusion systems are analytically and numerically challenging, since classical methods, like maximum or comparison principles are not applicable (see \cite{jungel2016,stara1995, massimini2024}).\\
The ion transport model includes the ion concentrations $(v_1,\dots,v_n)$, solvent concentration $v_0$ and electric potential $\Psi$ in a Lipschitz domain $\Omega\subset\R^d$ for $d=2$ or $d=3$.
The ion and solvent concentrations satisfy the equations for some $T>0$
	\begin{align}\label{equations}
		\begin{aligned}
			\partial_t v_i &=  D_i\text{div}(v_0\nabla v_i-v_i\nabla v_0+v_0v_i\beta z_i\nabla \Psi)\quad i=1,\dots,n\\
			v_{0} &= 1-\sum_{i=1}^n v_i,
		\end{aligned}
	\quad\text{in }\Omega\times(0,T)
	\end{align}
where $D_1,\dots,D_n>0$ are the diffusion coefficients, $\beta>0$ is the inverse thermal voltage and $z_i\in\R$ the charge of each particle of the $i$th species.
The electric potential satisfies
\begin{align}\label{eqn:phi}
	-\lambda^2\Delta\Psi = \sum_{i=1}^n z_iv_i+f \quad\text{in }\Omega\times(0,T),
\end{align}
where $\lambda>0$ is the permittivity, $f\in L^\infty(\Omega)$ is the permanent charge density.
We fix a decomposition $\Gamma_D,\Gamma_N$ of $\partial\Omega$ and prescribe
\begin{align}\label{boundaryCond}
	\begin{aligned}
		\left(v_0\nabla v_i-v_i\nabla v_0+v_0v_i\beta z_i\nabla\Psi\right)\cdot n &= 0 \text{ on } \Gamma_N\times[0,T],\quad v_i = v_i^D \text{ on } \Gamma_D\times[0,T],\quad i=1,\dots,n\\
		\nabla\Psi\cdot n &= 0 \text{ on } \Gamma_N\times[0,T],\quad\, \Psi = \Psi^D\text{ on } \Gamma_D\times[0,T]
	\end{aligned}
\end{align}
and consider
\begin{align}\label{eqn:initial}
	v_i(\cdot,0) = v_i^0\quad\text{in }\Omega,\; i=0,\dots,n
\end{align}
for given functions $v_i^D,\Psi^D\in L^2(0,T;L^2(\Gamma_D))$ and $v_i^0\in L^\infty(\Omega)$.
We further assume, that $v_i^D$ and $\Psi^D$ are traces of functions in $L^2(0,T;L^\infty(\Omega))\cap L^2(0,T;H^1(\Omega))$.
We will discuss the regularity of the solution in Theorem \ref{thm:existence}.\\
The main goal of this work is to establish an a posteriori error estimate for a finite volume scheme approximating the system \eqref{equations}-\eqref{eqn:initial}.
In the case of no electrical forces, i.e. $z_i=0$ for $i=1,\dots,n$, we derive an a posteriori error estimate under the assumption of constant diffusion constants, namely $D_i=D$ for $i=1,\dots,n$ and some conditions on the data (see Condition \ref{conditions} (C1)-(C7)).
In the presence of electrical forces, we further need that the solvent concentration is uniformly bounded from below (see Assumption \ref{assumptions}) as well as constant diffusion coefficients and equal charges and some conditions on the data (see Condition \ref{conditions} (C1)-(C5)).
%Under these assumptions, we derive a {\changes stability framework} independent of the numerical scheme as long as the assumptions on the regularity of the numerical solution are fulfilled.
This is a first step towards a posteriori error analysis of cross diffusion systems.
To our knowledge, there are no previous a posteriori error estimates for finite volume schemes approximating general non-linear parabolic systems with positive diffusion matrix.
\textcite{cochez2009} proved an a posteriori error estimate for a finite volume method approximating a non-linear steady-state diffusion problem.
Numerical schemes for cross-diffusion systems have been develop and studied in \cite{barrett2004,burger2020,jungel2021} and specifically for the ion transport model \cite{cances2019, gerstenmayer2018, gerstenmayerFiniteelement2018,cances2019}.
In \cite{gerstenmayerFiniteelement2018} the a priori convergence for a finite element scheme is shown and a priori convergence of a finite volume scheme is proven in \cite{cances2019} for a reduced model and in \cite{massimini2024} for the full model.
In this paper, we initiate the previously missing a posteriori analysis.
We use a cell centered finite volume scheme found in \cite{gerstenmayer2018}.
Finite volume schemes are commonly used for cross-diffusion systems, since it is easier to prove some important properties of the resulting numerical approximation like non-negativity and entropy dissipation compared to finite element schemes.
Therefore, we consider a finite volume scheme although it is usually harder to derive a posteriori error estimates for finite volume schemes than for finite element schemes.\\
In contrast, there are a posteriori error estimates for finite element schemes for non-linear parabolic systems see \cite{verfurth1998,sutton2020} and even ones allowing for degenerate diffusion coefficient in \cite{cances2020}.
In \cite{sutton2020} nonlinear diffusion systems are studied with symmetric and positive definite diffusion matrices that do only depend on space and time and not on the species $v_1,\dots,v_n$.
In \cite{verfurth1998} more general nonlinear parabolic systems are considered that also allow for species dependent diffusion matrices that are still symmetric and positive definite.
\textcite{cances2020} provide a posteriori error estimates for a finite element discretization of the nonlinear anisotropic Fokker-Planck equation for only one species.
\subsection{The Model}
%The cross-diffusion system \eqref{equations} was analytically studied in \cite{jungel2015,jungel2016,gerstenmayer2018,burger2012,chen2018}.
The existence of weak solutions for the stationary version of \eqref{equations}-\eqref{eqn:phi} was shown in \cite{burger2012}.
In \cite{zamponi2015} for the time dependent problem with pure Neumann boundary conditions and in \cite{gerstenmayer2018} for mixed Neumann and Dirichlet boundary conditions.
Uniqueness of weak solutions was established in \cite{zamponi2015} under the assumption of equal diffusion coefficients $D=D_i$ for $i=1,\dots,n$ and without electrical forces, i.e. $\Psi=0$.
In \cite{gerstenmayer2018}, the uniqueness of weak solutions with electrical forces was shown under the assumption of equal diffusion coefficients $D=D_i$ for $i=1,\dots,n$ and of equal charge $z=z_i$ for $i=1,\dots,n$.\\
%Furthermore, for long times every solution converges exponentially to a steady state solution (\cite{jungel2016}).\\
In \cite{jungel2016,massimini2024}, System \eqref{equations} is written in the form
\begin{align*}
	\partial_t v_i = \operatorname{div}\left(\sum_{j=1}^n\left(A_{i,j}(v)\nabla v_j+Q_{i,j}(v)\nabla\Psi\right)\right),\quad i=1,\dots,n
\end{align*}
with $v = (v_1,\dots,v_n)$ and $A(v),Q(v)\in\R^{n\times n}$
\begin{align*}
	(A(v))_{i,j} = 
	\begin{cases}
		D_i\left(1-\sum_{\substack{k=1\\i\neq k}}^n v_k\right) &\text{if } i=j\\
		D_iv_i &\text{if } i\neq j
	\end{cases},\quad Q_{i,j}(v)=v_0v_i\delta_{i,j}.
\end{align*}
The matrix $A(v)$ is called the diffusion matrix.
\textcite{gerstenmayer2018} proved the global existence of a weak solution by a compactness argument via the entropy variables.
In what follows, we write $\abs{\cdot}$ for the $d$-dimensional or $d-1$-dimensional Lebesgue measure.
For $\abs{\Gamma_D}>0$ we define the spaces
\begin{align*}
	H_D^1(\Omega) := \{u\in H^1(\Omega)\,:\,u = 0\quad\text{on }\Gamma_D\}
\end{align*}
and $H^1_D(\Omega)'$ the dual space of $H^1_D(\Omega)$.
We equip the space $H_D^1(\Omega)$ with the norm
\begin{align*}
	\norm{\cdot}_{H^1_D(\Omega)} = \norm{\nabla \cdot}_{L^2(\Omega)}.
\end{align*}
This norm is equivalent to the classical $H^1$ norm on $H^1_D$ via the general Poincar\'e-Friedrichs inequality, Theorem \ref{thm:friedirchsineq}.
We denote by $\langle\cdot,\cdot\rangle$ the duality pairing of $H^1_D(\Omega)'$ and $H^1_D(\Omega)$.
The next theorem states the existence of a global bounded weak solutions to \eqref{equations}-\eqref{eqn:initial} under suitable conditions.
\begin{thm}{\cite[Theorem 1]{gerstenmayer2018}}\label{thm:existence}
	Let $\Gamma_N\cap\Gamma_D=\emptyset$ with $\Gamma_N$ open and $|\Gamma_D|>0$.
	For all $T>0$, there exists a global bounded weak solution $v_0,\dots,v_n:(0,T)\times\Omega\to[0,1], \Psi:(0,T)\times\Omega\to\R$ of \eqref{equations}-\eqref{eqn:initial} with $\sum_{i=0}^nv_i=1$ such that
	\begin{align*}
		v_i\sqrt{v_0},\sqrt{v_0}&\in L^2(0,T;H^1(\Omega)),\quad \partial_t v_i\in L^2(0,T;H^1_D(\Omega)')\quad i=1,\dots,n,\\
		\Psi&\in L^2(0,T;H^1(\Omega))
	\end{align*}
	and
	\begin{align*}
		\int_0^T \langle\partial_t v_i,\phi_i\rangle dt+D_i\int_0^T\int_\Omega \left(v_0\nabla v_i-v_i\left(\nabla v_0+v_0\beta z_i\nabla\Psi\right)\right)\cdot\nabla \phi_i dxdt &= 0\\
		\lambda^2\int_0^T\int_\Omega \nabla \Psi\cdot\nabla\theta dxdt - \int_0^T\int_\Omega\left(\sum_{i=1}^n z_iv_i+f\right)\theta dxdt &=0
	\end{align*}
	for all $\phi_i,\theta\in L^2(0,T;H^1_D(\Omega))$, $i=1,\dots,n$.
	The boundary conditions \eqref{boundaryCond} are fulfilled in the sense of traces in $L^2(\Gamma_D)$ and the initial conditions \eqref{eqn:initial} are satisfied in the sense of $H^1_D(\Omega)'$.
\end{thm}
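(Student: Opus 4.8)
The plan is to use the boundedness-by-entropy method of Jüngel. The starting point is to recognise that the system \eqref{equations} carries a formal Lyapunov (entropy) functional. Since the solvent concentration is the defect $v_0 = 1-\sum_{i=1}^n v_i$, the natural candidate is the Boltzmann-type entropy density $h(v) = \sum_{i=1}^n v_i(\log v_i - 1) + v_0(\log v_0 - 1)$, augmented by the electrostatic energy $\tfrac{\lambda^2}{2}\norm{\nabla\Psi}^2$ coupled through \eqref{eqn:phi}. First I would introduce the entropy variables $w_i := \partial h/\partial v_i = \log(v_i/v_0)$. The decisive observation is that the inverse map $w \mapsto v$ sends all of $\R^n$ into the open Gibbs simplex $\{v_i > 0,\ \sum_{i=1}^n v_i < 1\}$; this is exactly what will deliver the uniform bounds $v_i \in [0,1]$ and $\sum_{i=0}^n v_i = 1$ for free, without recourse to a maximum principle.

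Next I would rewrite the fluxes in the entropy variables. A direct computation gives $\partial_t v_i = \operatorname{div}\big(\sum_j B_{ij}(v)\nabla w_j + (\text{drift in }\nabla\Psi)\big)$, where $B = A\,(h'')^{-1}$ and $h''$ is the Hessian of the entropy. The key structural step, and the main analytical obstacle, is to verify that the transformed matrix $B(v)$ is positive semi-definite uniformly on the simplex, so that testing the equation with $w$ produces an entropy-dissipation inequality. For the present model this reduces to an algebraic identity making $\sqrt{v_0}\,\nabla v_i - v_i\,\nabla\sqrt{v_0}$ (equivalently the gradients of $v_i\sqrt{v_0}$ and $\sqrt{v_0}$) the dissipation quantities; this is precisely the origin of the claimed regularity $v_i\sqrt{v_0},\sqrt{v_0}\in L^2(0,T;H^1(\Omega))$.

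To construct a genuine solution I would set up a three-level approximation. Discretise in time by implicit Euler with step $\tau$; at each step regularise the elliptic problem for $w$ by adding an $\varepsilon$-multiple of a higher-order term in $H^m(\Omega)$ with $m > d/2$, so that the embedding $H^m(\Omega)\hookrightarrow L^\infty(\Omega)$ keeps $v = v(w)$ well defined and bounded; then solve the regularised coupled elliptic system (the $w$-equation together with the Poisson equation for $\Psi$) by the Leray–Schauder fixed-point theorem. Testing the discrete $w$-equation with $w$ and handling the Poisson coupling via elliptic estimates yields a discrete entropy–energy inequality which, summed over the time steps, gives bounds uniform in $(\tau,\varepsilon)$: the $\varepsilon$-regularisation bound in $H^m$, the dissipation bound controlling $\nabla(v_i\sqrt{v_0})$ and $\nabla\sqrt{v_0}$ in $L^2$, and an $H^1$ bound on $\Psi$ from \eqref{eqn:phi}.

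Finally I would pass to the limit. Sending $\varepsilon\to 0$ and then $\tau\to 0$, the uniform gradient bounds together with the discrete estimate on $\partial_t v_i$ in $L^2(0,T;H^1_D(\Omega)')$ permit a discrete Aubin–Lions argument extracting subsequences that converge strongly in $L^2(0,T;L^2(\Omega))$; this strong convergence of the $v_i$ is what lets me pass to the limit in the nonlinear flux terms and in the Poisson coupling, while weak lower semicontinuity controls the dissipation. The limit then satisfies the weak formulation stated in the theorem, the initial datum is attained in $H^1_D(\Omega)'$ by the time-continuity inherited from the $\partial_t v_i$ bound, and the Dirichlet conditions persist in the trace sense because they are built into the affine test space throughout the approximation. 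I expect the positive semi-definiteness of $B$ and the correct identification of the dissipation quantities to be the technical heart; the fixed-point and compactness steps are then comparatively standard for entropy methods.
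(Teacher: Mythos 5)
You have correctly identified the method: the paper does not prove this theorem at all — it is quoted verbatim from \cite[Theorem 1]{gerstenmayer2018}, and the proof there is exactly the boundedness-by-entropy machinery you describe (entropy variables $w_i=\log(v_i/v_0)$ mapping $\R^n$ into the Gibbs simplex, implicit Euler in time with an $H^m$-regularization for $m>d/2$, Leray--Schauder at each step, a discrete entropy--dissipation inequality, and compactness). Your identification of the dissipation quantities $\nabla\sqrt{v_0}$ and $\nabla(v_i\sqrt{v_0})$ as the origin of the stated regularity is also the right structural insight. There is, however, one step in your sketch that would fail as written.

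Your final compactness claim — that the uniform gradient bounds plus the $L^2(0,T;H^1_D(\Omega)')$ bound on $\partial_t v_i$ let a discrete Aubin--Lions argument produce strong $L^2(0,T;L^2(\Omega))$ convergence of \emph{the} $v_i$ — does not follow from the bounds you actually have. The dissipation is degenerate: it controls $\sqrt{v_0}\,\nabla v_i$, $\nabla\sqrt{v_0}$ and $\nabla(v_i\sqrt{v_0})$ in $L^2$, but gives no bound on $\nabla v_i$ on the set where $v_0$ vanishes; this is precisely why the theorem asserts only $v_i\sqrt{v_0},\sqrt{v_0}\in L^2(0,T;H^1(\Omega))$ and why $v_i\in L^2(0,T;H^1(\Omega))$ is recovered only under the positivity Assumption \ref{assumptions} (cf. Remark 4 of the cited work). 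Standard Aubin--Lions therefore applies only to the solvent, via $\nabla v_0 = 2\sqrt{v_0}\,\nabla\sqrt{v_0}\in L^2$ (using $0\le v_0\le 1$) and $\partial_t v_0 = -\sum_{i=1}^n \partial_t v_i$. The limit in the nonlinear fluxes must then be identified differently, e.g. by rewriting
\begin{align*}
	v_0\nabla v_i - v_i\nabla v_0 = \sqrt{v_0}\,\nabla\bigl(v_i\sqrt{v_0}\bigr) - 3\,v_i\sqrt{v_0}\,\nabla\sqrt{v_0}
\end{align*}
and pairing the weakly convergent gradients with the strong convergence of $\sqrt{v_0^{(\tau)}}$ and the weak-$*$ $L^\infty$ convergence of the uniformly bounded $v_i^{(\tau)}$; making the products converge requires an additional (degenerate, nonlinear) compactness argument exactly where your sketch asserts standard compactness. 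A secondary omission: with non-homogeneous mixed Dirichlet--Neumann data the entropy and the electrostatic energy must be taken \emph{relative} to extensions $\bar v_i,\bar\Psi$ of the boundary data, and one tests with $w_i-\bar w_i\in H^1_D(\Omega)$; merely saying the Dirichlet data are built into the affine test space does not close the entropy inequality, since otherwise uncontrolled boundary terms survive in the dissipation balance.
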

We assume later on (see Assumptions \ref{assumptions}), that $v_0$ is bounded away from $0$.
In this case, according to \cite[Remark 4]{gerstenmayer2018}, $v_i\in L^2(0,T;H^1(\Omega))$ for all $i=0,\dots,n$.\\
Similar to the uniqueness proof in \cite{gerstenmayer2018} and the convergence proof of \cite{cances2019}, we need the assumption $D_i=D$ and $z_i=z$ for $i=1,\dots,n$ and further assume for simplicity $D=1$ with the same boundary \eqref{eqn:initial} and initial \eqref{boundaryCond} conditions (see Condition \ref{conditions} (C5)).
In this case the equations
\begin{align}\label{eqn:general}
	\begin{split}
		\partial_t v_i &= \text{div}(v_0\nabla v_i-v_i\nabla v_0+v_0v_i\beta z\nabla \Psi)\quad i=1,\dots,n\\
		v_{0} &= 1-\sum_{i=1}^n v_i\\
		-\lambda^2\Delta\Psi &= z(1-v_0)
	\end{split}\quad\text{in }\Omega\times(0,T)
\end{align}
decouple in the sense that the solvent concentration $v_0$ solves
\begin{align}\label{eqn:solvent}
	\partial_t v_0=\operatorname{div}(\nabla v_0-v_0(1-v_0)z\beta\nabla \Psi).
\end{align}
Equation (6) is obtained by summing up the evolution equations for the $v_i$ and using the fact that the sum of all concentrations is $1$.
We call \eqref{eqn:general} the general model for convenience.
Similar to \cite{cances2019,jungel2016}, we first consider the reduced model
\begin{align}\label{eqn:reduced}
	\begin{split}
		\partial_t v_i&=\operatorname{div}(v_0\nabla v_i-v_i\nabla v_0)\quad i=1,\dots,n\\
		v_0 &= 1-\sum_{i=1}^nv_i
	\end{split}\quad\text{in }\Omega\times(0,T)
\end{align}
which is obtained by setting $z=0$.
In the reduced model, the solvent concentration solves the diffusion equation
\begin{align}\label{eqn:solvent_reduced}
	\partial_t v_0 = \text{div}(\nabla v_0) = \Delta v_0\quad\text{in }\Omega\times(0,T).
\end{align}
\subsection{Main results and proof strategy}
\subsubsection{Main results}
We study a cell centered finite volume scheme for the system \eqref{eqn:general} and derive an a posteriori error estimate. 
For this we need a conforming reconstruction of the finite volume solution denoted by $(\widehat{u_0},\dots,\widehat{u_n},\widehat{\Phi})$.
Let $(v_0,\dots,v_n,\Psi)$ be a weak solution to the system \eqref{eqn:general} in the sense of Theorem \ref{thm:existence}.
We list all conditions on the data in Condition \ref{conditions} and assumptions in Assumption \ref{assumptions} that are needed later.
We do not need all conditions and assumptions at every time and specify the needed conditions and assumptions to prove the statement.
\begin{cond}\label{conditions}\hfill
	\begin{enumerate}[(C1)]
		\item The domain $\Omega\subset\R^d$ is an open, bounded domain with polygonal boundary.
			Furthermore, let $\Gamma_D,\Gamma_N$ be disjoint with $\partial\Omega=\Gamma_D\cup\Gamma_N$, $\Gamma_N$ open and $\abs{\Gamma_D}>0$.
		\item	Let $v_0^0,\dots,v_n^0\in L^\infty(\Omega)\cap H^1(\Omega)$ with $v_i^0>0$ for all $i=1,\dots,n$ and $\sum_{i=0}^n v_i^0 = 1$ on $\Omega$.
		\item Let the boundary conditions $v_{i}^D$ $i=0,\dots,n$ be constant in time and constant on connected components of $\Gamma_D$.
		\item There exists $\gamma>0$ such that 
			\begin{align*}
				\gamma &< v_0^0(x) \leq 1\quad\forall x\in\Omega.
			\end{align*}
		\item The diffusion coefficients and charges are all equal, i.e. $D_i=D$ and $z_i=z$ for $i=1,\dots,n$.
		\item There exists a uniform bound of the $W^{1,p^*}_D(\Omega)$-norm for some $1\leq p^*\leq \frac{d}{d-1}$ of the Laplace Green's function with homogeneous Dirichlet conditions on $\Gamma_D$ and homogeneous Neumann conditions on $\Gamma_N$ ,i.e.
			\begin{align*}
				\norm{\nabla_x G(x;y)}_{L^{p^*}(\Omega)}\leq C_{\text{Green},p^*}\quad\forall y\in\Omega.
			\end{align*}
		\item There exists a harmonic continuation $u^D_0\in W^{1,q}(\Omega)$ of $u^D_0\in W^{1,q}(\Gamma_D)$ with $\nabla u^D_0\cdot n=0$ on $\Gamma_N$.
	\end{enumerate}
\end{cond}
\begin{ass}\label{assumptions}
	Assume that there exists $\gamma>0$ such that
	\begin{align*}
		\gamma &< v_0(x,t)\leq1\quad\forall x\in\Omega\text{ and }t\in[0,T].
	\end{align*}
\end{ass}
\begin{remark}\label{remark:ass}
	\begin{enumerate}[(i)]
		\item Conditions (C1) and (C2) guarantee that there exists a weak solution to \eqref{equations}-\eqref{eqn:initial} according to Theorem \ref{thm:existence}.
		\item Condition (C3) ensures that the Dirichlet condition is fulfilled exactly by the numerical reconstruction.
		In most applications this assumption is not a severe restriction, since the Dirichlet boundary conditions are constant on connected subsets that model connections to comparably large reservoirs with constant concentration.
			We discuss Assumption (C3) in more detail in Section \ref{section:abstract_error}.
		\item Condition (C4) ensures that the solvent concentration in the exact solution $v_0$ is always bounded from below by $\gamma$ in the case of $z=0$.
			To ensure that the solvent concentration is bounded from below in the case $z\neq 0$ we need Assumption \ref{assumptions}.
			This is important, because equation \eqref{equations} degenerates for $v_0=0$ and we do not get a bound on the error of the gradient in this case.
			However, $v_0$ models the solvent concentration and therefore it is physically meaningful to assume that $v_0$ is bounded away from $0$.
		\item Condition (C5) ensures, that we obtain an equation for the solvent that is independent of all other species.
			This condition is also used in the uniqueness proof in \cite{gerstenmayer2018}.
		\item Conditions (C6) and (C7) are only needed for the $L^\infty$-norm error estimate for the diffusion equation from Section \ref{section:heat}.
			There are certain cases where \ref{conditions} (C6) can be proven to hold.
			These are
			\begin{itemize}
				\item for $d\geq 2$, $\Gamma_D=\partial\Omega$ and $1\leq p^*<\frac{d}{d-1}$. The proof for the case $d=2$ can be found in \cite[Theorem 2.12, Remark 2.19]{dong2009} and for $d>2$ in \cite[Theorem 1.1]{gurter1982}.
				\item for $d\geq 2$, $\Gamma_N=\partial\Omega$ and $1\leq p^*<\frac{d}{d-1}$. The proof can be found in \cite[Theorem 1]{mitrea2010}.
			\end{itemize}
			Usually, the constant $C_{\text{Green},p^*}$ is not accessible.
%			We discuss them in detail in Section \ref{section:heat}.
	\end{enumerate}
\end{remark}
The next theorem states the a posteriori error estimate for the general model \eqref{eqn:general}.
The constants and estimators showing up in the next theorem are made explicit in Section \ref{section:residual}.
\begin{thm}\label{thm:main_estimator}
	Let the data satisfy Condition \ref{conditions} (C1)-(C5) and let the weak solution $(v_0,\dots,v_n,\Psi)$ of \eqref{eqn:general} with initial and boundary conditions \eqref{boundaryCond}-\eqref{eqn:initial} in the sense of Theorem \ref{thm:existence} satisfy Assumption \ref{assumptions}.
	Let $(\widehat{u_0},\dots,\widehat{u_n},\widehat{\Phi})$ be the reconstruction of a finite volume solution defined in Section \ref{section:scheme}.
	\begin{enumerate}[(i)]
		\item The difference $\widehat{u_0}-v_0$ satisfies
			\begin{multline*}
				\max_{t\in[0,T]}\norm{\widehat{u_0}(t,\cdot)-v_0(t,\cdot)}_{L^2(\Omega)}^2+\norm{\nabla (\widehat{u_0}-v_0)}_{L^2([0,T]\times\Omega)}^2\\ 
				\leq\left(2\norm{\widehat{u_0}^0-v_0^0}_{L^2(\Omega)}^2+\sum_{j=0}^J\tau_j\left(\frac{\abs{z\beta}^2}{4}\left(\eta_{R,\Phi}^j\right)^2+4\left(\eta_{R,0}^j\right)^2\right)\right)\\
				\times\exp\left(2C_G^{\frac{2}{1-\theta}}(1+C_{F,2,\Gamma_D}^2)^{\frac{\theta}{1-\theta}}\mu\norm{\nabla \widehat{\Phi}}_{X(q)}^{\frac{2}{1-\theta}}+C_{F,2,\Gamma_D}^2\frac{\abs{z}^4\beta^2}{8\lambda^4}T\right)=:\eta_2^J.
			\end{multline*}
		\item The $L^2$-in-time, $H^1$-in-space seminorm of $\widehat{\Phi}-\Psi$ is bounded by
			\begin{align*}
				\norm{\nabla (\widehat{\Phi}-\Psi)}_{L^2([0,T]\times\Omega)}^2 
				\leq C_{F,2,\Gamma_D}^2\frac{\abs{z}}{\lambda^2}\eta_2^J+\sum_{j=0}^J \tau_j\left(\eta_{R,\Phi}^j\right)^2.
			\end{align*}
		\item The error $\widehat{u_i}-v_i$ satisfies
			\begin{multline}
				\max_{t\in[0,T]} \norm{\widehat{u_i}-v_i}_{L^2(\Omega)}^2+\norm{\sqrt{v_0}\nabla(\widehat{u_i}-v_i)}_{L^2([0,T]\times\Omega)}^2
				\leq
				\left(2\norm{\widehat{u_i}^0-v_i^0}^2_{L^2(\Omega)}\right.\\
				+\frac{12}\gamma\eta_2^J\left(\norm{\nabla \widehat{u_i}}_{L^\infty(0,T;L^{\tilde{q}}(\Omega))}^2+2\left((1+C_S\sqrt{1+C_{F,2,\Gamma_D}})\abs{z\beta}\norm{\nabla \widehat{\Phi}}_{L^\infty(0,T;L^{\tilde{q}}(\Omega))}+C_{F,2,\Gamma_D}\frac{z^2\beta}{\lambda^2}\right)^2\right)\\
				\left.+\frac{12}\gamma C_{F,2,\Gamma_D}^2\sum_{j=0}^J\tau_j\left(\abs{z\beta}^2\left(\eta_{R,\Phi}^j\right)^2+\left(\eta_{R,i}^j\right)^2\right) \right)
				\exp\left(2C_G^{\frac{2}{1-\theta}}(1+C_{F,2,\Gamma_D}^2)^{\frac{\theta}{1-\theta}}\frac{\mu}{\gamma^{\frac{1+\theta}{1-\theta}}}\norm{F}_{X(q)}^{\frac{2}{1-\theta}}\right).
			\end{multline}
	\end{enumerate}
	With $F:=\nabla \widehat{u_0}-\widehat{u_0}z\beta\nabla\widehat{\Phi}$, the constants $\theta = \frac d2-\frac dp$ and $\mu= \frac{1-\theta}{2}\left(\frac{1}{2(1+\theta)}\right)^\frac{\theta+1}{\theta-1}$.
\end{thm}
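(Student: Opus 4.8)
The plan is to establish the three bounds in the order (i), (ii), (iii), since the solvent estimate feeds into the potential estimate and both feed into the ion estimate; this ordering is already visible in the appearance of $\eta_2^J$ on the right-hand sides of (ii) and (iii). The common mechanism is a stability argument modelled on the weak-uniqueness proof of \cite{gerstenmayer2018}: for each component I would subtract the weak formulation satisfied by the exact solution from the variational identity satisfied by the conforming reconstruction, test the resulting error equation against the error itself, and absorb the reconstruction-dependent contributions into the computable residuals $\eta_{R,0}^j$, $\eta_{R,\Phi}^j$, $\eta_{R,i}^j$. For (i) I start from the decoupled solvent equation \eqref{eqn:solvent}. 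With $e_0 := \widehat{u_0}-v_0$, testing the error equation with $e_0$ produces $\tfrac12\tfrac{d}{dt}\norm{e_0}_{L^2(\Omega)}^2+\norm{\nabla e_0}_{L^2(\Omega)}^2$ on the left. The drift difference $v_0(1-v_0)z\beta\nabla\Psi-\widehat{u_0}(1-\widehat{u_0})z\beta\nabla\widehat{\Phi}$ is the only nonlinear term; I would split off the part linear in $e_0$ and absorb it into $\norm{\nabla e_0}_{L^2(\Omega)}^2$ by Young's inequality (this yields the factor $\tfrac{\abs{z\beta}^2}{4}(\eta_{R,\Phi}^j)^2$), while the genuine residual gives $4(\eta_{R,0}^j)^2$. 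Integrating over each time slab, summing, and applying a discrete Gronwall inequality delivers $\eta_2^J$: the Poincar\'e-Friedrichs constant $C_{F,2,\Gamma_D}$ together with the elliptic equation for $\Psi$ produces the $\tfrac{\abs{z}^4\beta^2}{8\lambda^4}T$ contribution in the exponent, and a Gagliardo-Nirenberg interpolation of the potential gradient accounts for the term carrying the exponent $\tfrac{2}{1-\theta}$.

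For (ii) the potential equation is elliptic, so there is no time-stepping. Writing $e_\Phi := \widehat{\Phi}-\Psi$, the error equation $-\lambda^2\Delta e_\Phi = -z\,e_0 + (\text{residual})$ tested against $e_\Phi$ gives $\lambda^2\norm{\nabla e_\Phi}_{L^2(\Omega)}^2 \le z\int_\Omega e_0\,e_\Phi + \langle\text{residual},e_\Phi\rangle$. Applying the Poincar\'e-Friedrichs inequality to $e_\Phi$ and Young's inequality, the $e_0$-term is controlled by $\eta_2^J$ from (i), producing the $C_{F,2,\Gamma_D}^2\tfrac{\abs{z}}{\lambda^2}\eta_2^J$ contribution, and the residual produces $\sum_j\tau_j(\eta_{R,\Phi}^j)^2$ after integration in time.

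The hardest part is (iii), because the diffusion in the $v_i$-equation degenerates with coefficient $v_0$, which is exactly why Assumption \ref{assumptions} (the lower bound $\gamma<v_0$) is invoked and why the $\tfrac{12}{\gamma}$ and $\gamma^{-(1+\theta)/(1-\theta)}$ prefactors appear. Setting $e_i := \widehat{u_i}-v_i$ and testing with $e_i$, the leading diffusion term, after the splitting $\widehat{u_0}\nabla\widehat{u_i}-v_0\nabla v_i = v_0\nabla e_i + e_0\nabla\widehat{u_i}$, produces the weighted seminorm $\norm{\sqrt{v_0}\nabla e_i}_{L^2(\Omega)}^2$ plus a coupling to $e_0$ through $\norm{\nabla\widehat{u_i}}_{L^\infty(0,T;L^{\tilde{q}}(\Omega))}$. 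The remaining cross-diffusion term $\widehat{u_i}\nabla\widehat{u_0}-v_i\nabla v_0$ and the convective term $\widehat{u_0}\,\widehat{u_i}\beta z\nabla\widehat{\Phi}-v_0 v_i\beta z\nabla\Psi$ are linearized so that they couple $e_i$ to the already-controlled $e_0$ and $e_\Phi$, contributing the bracket built from $\norm{\nabla\widehat{\Phi}}_{L^\infty(0,T;L^{\tilde{q}}(\Omega))}$ and $C_{F,2,\Gamma_D}\tfrac{z^2\beta}{\lambda^2}$. To close the estimate I would use Gagliardo-Nirenberg to interpolate an $L^r$-norm of $e_i$ between $\norm{e_i}_{L^2(\Omega)}$ and the weighted gradient (using $\gamma<v_0$ to pass between weighted and unweighted gradients), then Young's inequality to absorb the gradient into the left-hand side; the characteristic exponent $\tfrac{2}{1-\theta}$ with $\theta=\tfrac d2-\tfrac dp$, the constant $\mu$, and the dependence on $\norm{F}_{X(q)}$ for the effective flux $F=\nabla\widehat{u_0}-\widehat{u_0}z\beta\nabla\widehat{\Phi}$ all emerge from this interpolation-and-absorption step. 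A final Gronwall argument, now with the $\gamma$-weighted constant in the exponent, yields the stated bound.

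The central obstacle throughout is controlling the nonlinear drift and cross-diffusion differences without recourse to any comparison or maximum principle. Condition (C5) is what makes the solvent equation decouple and keeps the cross-terms in a form amenable to the weighted-energy argument, while the lower bound $\gamma$ is what prevents the degenerate diffusion in (iii) from destroying the gradient control; the interpolation step in (iii) is the delicate point, since it must be calibrated so that the exponent on the absorbed gradient matches the weighted-energy term and leaves only data- and residual-dependent quantities in the Gronwall factor.
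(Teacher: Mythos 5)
Your proposal is correct and is essentially the paper's own argument: the paper proves Theorem \ref{thm:main_estimator} by combining the stability framework of Theorem \ref{thm:abstract_general} --- whose derivation in Section \ref{abserror:general} is precisely your scheme of the pointwise Poisson stability bound \eqref{eqn:electricpot} fed into the solvent energy estimate, testing each error equation with the error itself, Gagliardo--Nirenberg interpolation of the \emph{error} paired with $\norm{\nabla\widehat{\Phi}}_{L^q(\Omega)}$ resp.\ $\norm{F}_{L^q(\Omega)}$ (the exponent $\frac{2}{1-\theta}$ landing on these norms after Young's inequality, a minor wording slip in your part (i), which speaks of interpolating the potential gradient), and Gronwall with the $\gamma$-weighted constant in part (iii) --- with the residual bounds \eqref{eqn:residual_bound_phi}--\eqref{eqn:residual_bound_i}. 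The one step you treat as a black box is exactly the one the paper also outsources to Section \ref{section:residual}, namely that the dual norms $\norm{R_0}_{H^1_D(\Omega)'}$, $\norm{R_\Phi}_{H^1_D(\Omega)'}$, $\norm{R_i}_{H^1_D(\Omega)'}$ are bounded by the computable estimators $\eta_{R,\cdot}^j$ through the Morley reconstruction's flux conservation and quasi-orthogonality (Lemma \ref{orth_recon}), so taking them as given inputs is consistent with the paper's own proof structure.
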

We also derive an a posteriori error estimate for the reduced model \eqref{eqn:reduced} in Theorem \ref{thm:error_estimator}.
For this, we first consider the solvent concentration $v_0$ that solves the diffusion equation \eqref{eqn:solvent_reduced}.
We establish a posteriori error estimates for $\widehat{u_0}-v_0$ in the $L^\infty([0,T]\times\Omega)$ norm under a further assumption on the domain $\Omega$ and the boundary conditions (see Condition \ref{conditions} (C6) and (C7)) and for $\nabla (\widehat{u_0}-v_0)$ in the $L^2([0,T]\times\Omega)$ norm.\\
For the maximum norm error we use a technique similar to \cite{kyza2018,demlow2012,demlow2009}.
Such a maximum norm a posteriori error estimate is new for any finite volume scheme for the diffusion equation \eqref{eqn:solvent_reduced} and we think it is interesting in its own right.
Similar estimates have been established for various finite element schemes see \cite{demlow2009,demlow2012}.\\
\subsubsection{Proof strategy}
We derive a residual based a posteriori error estimator using the approach presented in \cite{makridakis2003,makridakis2007}.
First, we derive a stability framework for the reduced, Theorem \ref{thm:abstract_error}, and full, Theorem \ref{thm:abstract_general}, ion transport models.
In essence, we provide upper bounds on the norm of the difference of the exact solution and the solution to a perturbed system, where the upper bound only depends on the solution to the perturbed system and the "perturbation".
To derive an a posteriori error estimate, we interpret approximate solutions as solution to a perturbed system, where the residuals serve as the perturbation.
Hence, once the stability framework has been derived, we only need to bound the residuals in a suitable norm that matches the stability framework to obtain an a posteriori error estimate.
\\
We state stability frameworks for the ion concentrations first for the reduced model \eqref{eqn:reduced} and then for the general model \eqref{eqn:general}.
Both are independent of the numerical scheme and reconstruction used.
We assume that $(u_0,\dots,u_n,\Phi)$ is a weak solution of the perturbed system 
\begin{align}\label{eqn:fullperturbed}
	\begin{aligned}
	\partial_t u_i-\operatorname{div}(u_0\nabla u_i-u_i\nabla u_0+u_0u_i\beta z\nabla\Phi)&=R_i\quad i=1,\dots,n\\
	\sum_{i=0}^n u_i&= 1\\
	-\lambda^2\Delta\Phi -z(1-u_0)&=R_\Phi
	\end{aligned}
	\quad\text{on }\Omega\times(0,T)
\end{align}
with data satisfying Condition \ref{conditions} (C1)-(C5) and $R_1,\dots,R_\Phi\in L^2(0,T;H^1_D(\Omega)')$.
Additionally, we assume that the solution $u_0,\dots,u_n,\Phi$ satisfies Assumption \ref{assumptions} and Assumption \ref{ass:perturbed}.
In the case $z=0$, we do not need Assumption \ref{assumptions}.
In this paper, the solution to the perturbed system is the reconstruction $\widehat{u_0},\dots,\widehat{u_n},\widehat{\Phi}$ given in Section \ref{section:scheme} which satisfies Assumption \ref{ass:perturbed} automatically.
Using the stability framework, Theorem \ref{thm:abstract_general}, for the general model, we only need to bound the residuals $R_0,\dots,R_n,R_\Phi$ in the $L^2(0,T;H^1_D(\Omega)')$ norm to obtain a reliable a posteriori error estimate for the model \eqref{eqn:general}.
For the reduced model we combine the estimates for the solvent concentration from Section \ref{section:heat} and the bounds for the residuals $R_0,\dots,R_n,R_\Phi$ in the $L^2(0,T;H^1_D(\Omega)')$ norm to get a reliable a posteriori error estimate via the stability framework for the reduced model (see Theorem \ref{thm:abstract_error}).
This is done in Section \ref{section:residual} and relies mostly on the properties of the reconstruction.\\
We expect that the error $\widehat{u_i}-v_i$ converges linearly in the $L^2(0,T;H^1(\Omega))$ norm, since $\widehat{u_i}$ is manly composed of piecewise linear polynomials (see Section \ref{subsec:Morley}).
This linear convergence of the a posteriori estimator and error is observed in numerical experiments in Section \ref{section:numerics}.\\
%With Assumption (H3) in Assumption \ref{assumptions}, the diffusion matrix $A(u)$ becomes positive definite.
%This makes it possible to use the stability framework from Section \ref{section:abstract_error} instead of the Gajewski metric or relative entropy (see Remark \ref{Whyl2}).
%In this setting it is easier to obtain a reliable upper bound for the error of our finite volume scheme.\\
The paper is structured as follows: In Section \ref{section:ineq}, we introduce the simplicical mesh and some inequalities used throughout this paper.
Next in Section \ref{section:scheme}, we present a cell centered finite volume scheme for the full system and a reconstruction of the finite volume solution similar to \cite{nicaise2005,nicaise2006}.\\
In Section \ref{section:heat}, we prove the bounds on $u_0-v_0$ and $\nabla(u_0-v_0)$ needed in the error estimator for the reduced model presented in Section \ref{section:abstract_error}.
After that, we present the stability framework for the reduced model \eqref{eqn:reduced} and the general model under the assumptions in Assumptions \ref{assumptions}.
The bounds for the residuals $R_0,\dots,R_n,R_\Phi$ in the $L^2(0,T;H^1_D(\Omega)')$-norm and the a posteriori error estimator for the general and reduced system are stated and proven in Section \ref{section:residual}.
In Section \ref{section:numerics}, the a posteriori error estimators are tested in numerical experiments.

\section{Notation and basic inequalities}\label{section:ineq}
\subsection{Notation}
We use the classical definition of a regular admissible finite volume mesh of $\Omega$ found in \cite[Definition 9.1]{eymard2019}.
The definition stated only allows for simplices, but the scheme in Section \ref{subsec:FV} can be used on any convex polygonal control volumes for every $d\geq 1$.
The reconstruction presented in Section \ref{subsec:Morley}, can be done in $d=2$ and $d=3$.
%However, the reconstructions done in Section \ref{subsec:Morley} can be easily extended to rectangles and tetrahedra using the constructions in \cite{nicaise2005,nicaise2006}.
\begin{defn}{\cite[Def. 9.1]{eymard2019}}\label{defn:mesh}
	Let $\mathcal{T}$ be a set of simplices, $\mathcal{E}$ a family of edges with each $\sigma\in\mathcal{E}$ beeing a subset of $\overline{\Omega}$ and contained in a hyperplane of $\R^d$ and $\mathcal{P}$ a set of points in $\Omega$.
	The finite volume mesh $(\mathcal{T},\mathcal{E},\mathcal{P})$ is admissible if the following conditions are satisfied.
	\begin{enumerate}[(i)]
		\item $\bigcup_{K\in\mathcal{T}} \overline{K} = \overline{\Omega}$.
		\item For every $K,L\in\mathcal{T}$ with $K\neq L$ the $(d-1)$-dimensional Lebesgue measure of $\overline{K}\cap\overline{L}$ is 0 or there is $\sigma\in\mathcal{E}$ such that $\overline{\sigma}=\overline{K}\cap\overline{L}$.
		\item For every $K\in\mathcal{T}$, there is a subset $\mathcal{E}_K\subset\mathcal{E}$ such that $\partial K = \bigcup_{\sigma\in\mathcal{E}_K} \overline{\sigma}$ and $\mathcal{E} = \bigcup_{K\in\mathcal{T}}\mathcal{E}_K$.
		\item The set of points $\mathcal{P}$ has the form $\mathcal{P} = (x_K)_{K\in\mathcal{T}}$ with $x_K\in \overline{K}$ and $x_K\neq x_L$ for cells $K\neq L$. Furthermore, for cells $K,L\in\mathcal{T}$ that share an edge $\overline{\sigma} = \overline{K}\cap\overline{L}$ the straight line from $x_K$ to $x_L$ is orthogonal to $\sigma$.
		\item Let $\sigma\in\mathcal{E}$ with $\sigma\subset\partial \Omega$.
			Then $\sigma\subset\Gamma_N$ or $\sigma\subset\Gamma_D$.
			We further divide the set of edges into
			\begin{align*}
				\mathcal{E} = \mathcal{E}^i\cup\mathcal{E}^N\cup\mathcal{E}^D,
			\end{align*}
			where $\mathcal{E}^i$ are the inner edges, $\mathcal{E}^N$ are the edges on the Neumann boundary and $\mathcal{E}^D$ are the edges on the Dirichlet boundary.
	\end{enumerate}
\end{defn}
%In Definition \ref{defn:mesh}, assumption (iv) is needed since we will%can be weakened if we use a different flux see Remark \ref{remark:scheme}.\\
We denote by $n_{K,\sigma}$ the unit outer normal vector of $K$ on $\sigma\in\mathcal{E}_K$ and by $\{a_i^K\,:\,i=0,\dots,d\}\subset\overline{K}$ the corners of $K$.
Furthermore, we denote $N_\partial := \max_{K\in\mathcal{T}}\abs{\mathcal{E}_K}$.\\
The jump of a sufficiently regular function $f$ across an edge $\sigma\in\mathcal{E}$ is given by
\begin{align*}
	\llbracket f\rrbracket(x) := \begin{cases}
									f|_K(x)\cdot n_{K,\sigma}+f|_L(x)\cdot n_{L,\sigma}&\text{ if }\sigma=\partial K\cap\partial L\\
									f|_K(x)\cdot n_{K,\sigma} &\text{ if } \sigma\subset\partial\Omega
								\end{cases}
\end{align*}
for $x\in \sigma$.
We also define the average on an edge by
\begin{align*}
	\{\!\!\{f\}\!\!\}(x) :=
	\begin{cases}
		\frac12\left(f|_K(x)+f|_L(x)\right) &\text{ if } \sigma=\partial K\cap\partial L\\
		f|_K(x) &\text{ if } \sigma \subset\partial\Omega
	\end{cases}
\end{align*}
for $x\in\sigma$.
The classical element and edge bubble functions for any element $K\in\mathcal{T}$ and edge $\sigma\in\mathcal{E}$ are denoted by $b_K$ and $b_\sigma$ (for a definition see \cite[Lemma 4.3]{bartels2016}).\\
For every $K\in\mathcal{T}$ we set $h_K:=\operatorname{diam}(K)$ as the diameter of $K$ and for $\sigma\in\mathcal{E}$
\begin{align*}
	d_\sigma = 
	\begin{cases}
		\operatorname{dist}(x_K,x_L) &\text{if } \sigma=K\cap L\\
		\operatorname{dist}(x_K,\sigma)&\text{if } \sigma\subset \partial\Omega.	
	\end{cases}
\end{align*}
For each $K\in\mathcal{T}$ we define $\M_K:L^2(K)\to\R$ via
\begin{align*}
	\mathcal{M}_Kf = \frac1{\abs{K}}\int_K f(x)\;dx\quad\forall K\in\mathcal{T}, f\in L^2(K),\quad
\end{align*}
and for each $\sigma\in\mathcal{E}$ we define $M_\sigma:L^2(\sigma)\to\R$ via
\begin{align*}
	\mathcal{M}_\sigma f = \frac1{\abs{\sigma}}\int_\sigma f(x)\;dS(x)\quad\forall \sigma\in\mathcal{E}, f\in L^2(\sigma).
\end{align*}
For brevity, we write
\begin{align*}
	\norm{\cdot}_*:=\norm{\cdot}_{L^2(0,T;H^1_D(\Omega)')}, \quad
	\norm{\cdot}_M:=\norm{\cdot}_{L^2(M)},\quad
	X(q) := L^{\frac{2q}{q-d}}(0,T;L^q(\Omega)),
\end{align*}
for some measurable set $M$. 
For the time discretization, we fix a time $T$ and a partition $(t_j)_{j=0}^J$ with $t_0=0$ and $t_J = T$.
We denote the time step size by
\begin{align*}
	\tau_j = t_{j+1}-t_j\quad j=0,\dots,J-1.
\end{align*}
\subsection{Basic inequalities}
We need the following generalized version of the classical Poincar\'e-Friedrichs inequality.
A proof can be found in \cite[Theorem 3.1]{egert2015} and \cite[Section II.1.4]{teman1988}
\begin{thm}\label{thm:friedirchsineq}
	Let $\Omega\subset\R^d$ be an open domain and $\Gamma_1\subset\partial\Omega$ with $|\Gamma_1| >0$ and $1\leq q<\infty$.
	Then there is a constant $C_{F,q,\Gamma_1}>0$
	\begin{align*}
		\norm{u}_{L^q(\Omega)} \leq C_{F,q,\Gamma_1}\norm{\nabla u}_{L^q(\Omega)}
	\end{align*}
	for all $u\in H^1(\Omega)$ with $u|_{\Gamma_1}=0$.
\end{thm}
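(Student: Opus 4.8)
The plan is to establish the inequality by the standard compactness--contradiction argument for Poincar\'e--Friedrichs-type estimates. Suppose no such constant $C_{F,q,\Gamma_1}$ exists. Then for every $k\in\N$ there is $u_k$ with $u_k|_{\Gamma_1}=0$ and $\norm{u_k}_{L^q(\Omega)}>k\norm{\nabla u_k}_{L^q(\Omega)}$; since the inequality is trivial when $u_k=0$, we may assume $\norm{u_k}_{L^q(\Omega)}>0$ and rescale to $w_k:=u_k/\norm{u_k}_{L^q(\Omega)}$. This produces a sequence with $\norm{w_k}_{L^q(\Omega)}=1$, $w_k|_{\Gamma_1}=0$, and $\norm{\nabla w_k}_{L^q(\Omega)}<1/k\to0$, hence bounded in $W^{1,q}(\Omega)$.

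First I would extract a limit. For $1<q<\infty$ the reflexivity of $W^{1,q}(\Omega)$ yields a subsequence with $w_k\rightharpoonup w$ weakly in $W^{1,q}(\Omega)$, and the Rellich--Kondrachov compact embedding $W^{1,q}(\Omega)\hookrightarrow\hookrightarrow L^q(\Omega)$ gives a further subsequence converging strongly in $L^q(\Omega)$, so that $\norm{w}_{L^q(\Omega)}=1$ and in particular $w\neq0$. Because $\norm{\nabla w_k}_{L^q(\Omega)}\to0$, the weak limit satisfies $\nabla w=0$; assuming $\Omega$ connected this forces $w\equiv c$ for a constant $c\neq0$. To close the contradiction I would use continuity of the trace operator $W^{1,q}(\Omega)\to L^q(\partial\Omega)$, which is bounded (hence weakly continuous) on Lipschitz domains. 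Since every $w_k$ has vanishing trace on $\Gamma_1$, so does the limit $w$; but $w\equiv c$ together with $\abs{\Gamma_1}>0$ forces $c=0$, contradicting $\norm{w}_{L^q(\Omega)}=1$. This yields the existence of $C_{F,q,\Gamma_1}$, and the equivalence of $\norm{\nabla\cdot}_{L^q(\Omega)}$ with the full $W^{1,q}$-norm on the subspace $\{u:u|_{\Gamma_1}=0\}$ follows at once.

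The hard part is not the algebra but guaranteeing the two analytic ingredients: the compact embedding and the trace continuity both require a minimal amount of boundary regularity (Lipschitz suffices, which holds for the polygonal $\Omega$ of Condition \ref{conditions}~(C1)), and connectedness of $\Omega$ is needed so that a gradient-free weak limit is a single constant rather than a piecewise constant function. Two technical caveats deserve care: for $q=1$ reflexivity is lost, so one must replace weak $W^{1,1}$-compactness by weak-$*$ compactness in $BV(\Omega)$ (or first embed into a reflexive $W^{1,q'}$ with $q'>1$ before passing to the limit); and the statement mixes $u\in H^1(\Omega)$ with $L^q$-gradient norms, the natural reading being $u\in W^{1,q}(\Omega)$, under which the argument applies verbatim. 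Rather than spell these out, one may simply appeal to \cite{egert2015,teman1988}, where the estimate is proved in the generality needed here.
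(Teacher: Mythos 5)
The paper does not prove this statement itself: it only cites \cite[Theorem 3.1]{egert2015} and \cite[Section II.1.4]{teman1988}, and the proof in Temam is precisely the compactness--contradiction argument you give, so your proposal essentially reconstructs the cited proof rather than taking a different route. Your main line (normalize, extract a weak $W^{1,q}$ limit, Rellich--Kondrachov, $\nabla w=0$, continuity of the trace, hence $w\equiv 0$) is correct for $1<q<\infty$, and you are right to flag that the theorem as stated is too generous: boundedness and Lipschitz regularity of $\Omega$ (supplied here by Condition (C1)) are needed for the compact embedding and the trace operator, and connectedness is needed so that a gradient-free limit is a single constant --- for a disconnected $\Omega$ with a component whose boundary does not meet $\Gamma_1$ the inequality is simply false. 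Your reading of the hypothesis $u\in H^1(\Omega)$ with $L^q$-norms as effectively $u\in W^{1,q}(\Omega)$ is also the sensible one.

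Two caveats in your $q=1$ discussion deserve correction. First, the parenthetical fallback ``first embed into a reflexive $W^{1,q'}$ with $q'>1$'' is backwards: on a bounded domain $W^{1,q'}\subset W^{1,1}$ for $q'>1$, not the reverse, so a bounded $W^{1,1}$ sequence gives you nothing in any reflexive $W^{1,q'}$. Second, the $BV$ route needs one more observation than you supply: the trace operator on $BV(\Omega)$ is \emph{not} continuous under weak-$*$ convergence, so vanishing traces do not automatically pass to the limit. The argument is rescued because $\abs{Dw_k}(\Omega)=\norm{\nabla w_k}_{L^1(\Omega)}\to 0=\abs{Dw}(\Omega)$, so the convergence is in fact strict in $BV$, and traces are continuous under strict convergence. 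Alternatively, one can avoid $BV$ entirely: once $w_k\to w\equiv c$ strongly in $L^1(\Omega)$ (Rellich for $W^{1,1}$ on a bounded Lipschitz domain) and $\nabla w_k\to 0$ in $L^1(\Omega)$, the trace estimate $\norm{\operatorname{tr}v}_{L^1(\partial\Omega)}\leq C\left(\norm{v}_{L^1(\Omega)}+\norm{\nabla v}_{L^1(\Omega)}\right)$ applied to $v=w_k-c$ yields $\abs{c}\,\abs{\Gamma_1}=0$, hence $c=0$ and the contradiction. With either repair your argument is complete for all $1\leq q<\infty$, matching the generality of the references the paper invokes.
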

We later need a so called multiplicative trace theorem on each triangle.
%A version of this inequality can be found in \cite[Lemma 1.49]{di_pietro_2012} for $p=2$ and in \cite[Lemma 12.15]{ern2021} for general $p$.
This inequality can be found in \cite[Lemma 12.15]{ern2021}.
\begin{lemma}\label{scaledTrace}
	There exits $C_\text{cti}>0$ such that for all $p\in[1,\infty]$, $K\in \mathcal{T}$, $\sigma\in\mathcal{E}_K$ and $v\in W^{1,p}(K)$ it holds
	\begin{align*}
				\norm{v}_{L^p(\sigma)} 
		&\leq C_\text{cti}\left(\norm{\nabla v}_{L^p(K)}^{\frac1p}+h_K^{-\frac1p}\norm{v}_{L^p(K)}^{\frac1p}\right)\norm{v}_{L^p(K)}^{1-\frac1p}.
	\end{align*}
\end{lemma}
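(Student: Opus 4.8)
The plan is to prove the estimate first for smooth $v$ via a divergence-theorem identity and then extend to $W^{1,p}(K)$ by density, treating $p=\infty$ separately. I would reduce to $v\in C^\infty(\overline K)$ using that $C^\infty(\overline K)$ is dense in $W^{1,p}(K)$ for $1\le p<\infty$; once the inequality is established for smooth functions with a constant independent of $v$, it passes to the limit because the trace operator $W^{1,p}(K)\to L^p(\sigma)$ is bounded and both sides are continuous in the $W^{1,p}(K)$-norm. For $p=\infty$ the claim is trivial: $\norm{v}_{L^\infty(\sigma)}\le\norm{v}_{L^\infty(K)}$, and with the convention $s^{0}=1$ the right-hand side equals $2C_{\text{cti}}\norm{v}_{L^\infty(K)}$, so any $C_{\text{cti}}\ge\tfrac12$ suffices.

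The key device for finite $p$ is to integrate against a well-chosen vector field. Let $a$ be the vertex of $K$ opposite to $\sigma$ and set $\psi(x)=x-a$. On each face of $K$ adjacent to $a$ the vector $x-a$ lies in the plane of that face, so $\psi\cdot n_{K,\cdot}=0$ there, while on $\sigma$ one has $\psi\cdot n_{K,\sigma}=h$, the constant height of $K$ over $\sigma$. Applying the divergence theorem to $|v|^p\psi$ and using $\operatorname{div}\psi=d$ together with the chain rule $\nabla|v|^p=p\,|v|^{p-1}\operatorname{sgn}(v)\nabla v$ gives
\begin{align*}
	h\int_\sigma |v|^p\,dS=\int_K\big(d\,|v|^p+p\,|v|^{p-1}\operatorname{sgn}(v)\,(x-a)\cdot\nabla v\big)\,dx.
\end{align*}
Bounding $|x-a|\le h_K$ and applying H\"older's inequality with exponents $\tfrac{p}{p-1}$ and $p$ to the second term (the case $p=1$ being immediate) yields
\begin{align*}
	h\,\norm{v}_{L^p(\sigma)}^p\le d\,\norm{v}_{L^p(K)}^p+p\,h_K\,\norm{v}_{L^p(K)}^{p-1}\norm{\nabla v}_{L^p(K)}.
\end{align*}

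To finish I would use the shape regularity built into the admissible mesh (Definition \ref{defn:mesh}) to compare the height $h$ with the diameter $h_K$: since $h=d\abs{K}/\abs{\sigma}$ and $h\ge\rho_K$ where $\rho_K$ is the inradius, regularity gives $h\ge c\,h_K$ for a constant $c$ depending only on the mesh regularity, whence $h^{-1}\le C h_K^{-1}$ and $h_K/h\le C$. Dividing by $h$ then produces
\begin{align*}
	\norm{v}_{L^p(\sigma)}^p\le C\Big(h_K^{-1}\norm{v}_{L^p(K)}^p+\norm{v}_{L^p(K)}^{p-1}\norm{\nabla v}_{L^p(K)}\Big),
\end{align*}
and taking $p$-th roots together with the subadditivity $(s+t)^{1/p}\le s^{1/p}+t^{1/p}$ and the regrouping $\norm{v}_{L^p(K)}^{(p-1)/p}=\norm{v}_{L^p(K)}^{1-1/p}$ gives exactly the asserted multiplicative form with $C_{\text{cti}}=C^{1/p}$.

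The main obstacle I anticipate is not any single estimate but making the constant genuinely uniform in $p\in[1,\infty]$ and in $K\in\mathcal{T}$: the bound $h\ge c\,h_K$ hinges on the shape-regularity of the mesh, and one must verify the chain rule $\nabla|v|^p=p\,|v|^{p-1}\operatorname{sgn}(v)\nabla v$ holds in $W^{1,1}(K)$ across the whole range $1\le p<\infty$ (the endpoint $p=1$ using $\nabla|v|=\operatorname{sgn}(v)\nabla v$ almost everywhere) so that the density argument is legitimate uniformly. Everything else—the divergence theorem, the geometry of $\psi\cdot n$, and H\"older's inequality—is routine.
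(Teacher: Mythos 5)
Your proposal is correct and is essentially the canonical proof of this multiplicative trace inequality: the paper itself gives no argument, simply citing \cite[Lemma~12.15]{ern2021}, and the proof there is exactly your device --- apply the divergence theorem to $\abs{v}^p(x-a)$ with $a$ the vertex opposite $\sigma$, use $\psi\cdot n_{K,\sigma}=h$ on $\sigma$ and $\psi\cdot n=0$ on the other faces, then H\"older and shape regularity ($h\geq \rho_K\geq c\,h_K$, which is legitimate since the paper assumes a \emph{regular} admissible mesh) to divide through by $h$. The one point you should state explicitly rather than defer is that the factor $p$ coming from the chain rule enters the final constant only as $(C_2\,p)^{1/p}\leq\max(C_2,1)\,e^{1/e}$, so one takes $C_\text{cti}$ as a supremum over $p\in[1,\infty)$ of such quantities (finite, with the $p=\infty$ case handled separately as you do); your closing formula $C_\text{cti}=C^{1/p}$ is, as written, a $p$-dependent constant, which the lemma does not permit.
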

%\begin{proof}
%	Define the function
%	\begin{align*}
%		F_\sigma = \frac{\abs{\sigma}}{d\abs{K}}(x-a_\sigma)
%	\end{align*}
%	where $a_\sigma$ is the vertex of $K$ opposite to $\sigma$. Then,
%	\begin{align*}
%		\norm{v}_{L^p(\sigma)}^p 
%		= \int_\sigma \abs{v}^p ds
%		= \int_{\partial K} \abs{v}^p(F_\sigma\cdot n_K)\,dS
%		= \int_{K} \operatorname{div}(\abs{v}^pF_\sigma) dx
%		= \underbrace{\int_{K} p\abs{v}^{p-1}F_\sigma\cdot\nabla v dx}_{=:I}
%		+ \underbrace{\int_K \abs{v}^p\operatorname{div}(F_\sigma) dx}_{=:II}.
%	\end{align*}
%	Applying Hölder's inequality to $I$ yields
%	\begin{align*}
%		\int_{K} p\abs{v}^{p-1}F_\sigma\cdot\nabla v dx
%		\leq p\norm{\abs{v}^{p-1}}_{L^\frac p{p-1}(K)}\norm{F_\sigma\cdot\nabla v}_{L^p(K)}
%		\leq p\frac{\abs{\sigma}h_K}{d\abs{K}}\norm{v}_{L^p(K)}^{p-1}\norm{\nabla v}_{L^p(K)}.
%	\end{align*}
%	For $II$, we again use Hölder's inequality
%	\begin{align*}
%		\int_K \abs{v}^p\operatorname{div}(F_\sigma) dx
%		\leq \norm{\abs{v}^p}_{L^1(K)}\norm{\operatorname{div}(F_\sigma)}_{L^\infty(K)}
%		= \frac{\abs{\sigma}}{\abs{K}}\norm{v}_{L^p(K)}^p.
%	\end{align*}
%	Combining the estimates for $I$ and $II$ yields
%	\begin{align*}
%		\norm{v}_{L^p(\sigma)}^p
%		&\leq \frac{\abs{\sigma}h_K}{d\abs{K}}\left(p\norm{v}_{L^p(K)}^{p-1}\norm{\nabla v}_{L^p(K)}+dh_K^{-1}\norm{v}_{L^p(K)}^p\right)\\
%		&\leq C_\text{cti}\left(p\norm{\nabla v}_{L^p(K)}+dh_K^{-1}\norm{v}_{L^p(K)}\right)\norm{v}_{L^p(K)}^{p-1}.
%	\end{align*}
%\end{proof}
Combining Lemma \ref{scaledTrace} with the classical Poincar\'e inequality (see \cite[Corollary 2.3]{bartels2016}) we prove an approximation inequality on edges.
We denote the Poincar\'e constant for the space $L^q$ by $C_{P,q}$.

\begin{thm}\label{pFaceInterpolation}
	There exists $C_{\text{app}}>0$ such that for all $p\in[1,\infty]$, $K\in\mathcal{T}$, $\sigma\in\mathcal{E}_K$ and $v\in W^{1,p}(K)$ the following holds
	\begin{align*}
		\norm{v-\M_Kv}_{L^p(\sigma)} \leq C_{\text{app},p}h_K^{1-\frac1p}\norm{\nabla v}_{L^p(K)}.
	\end{align*}
\end{thm}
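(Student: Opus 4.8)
The plan is to apply the multiplicative trace inequality of Lemma \ref{scaledTrace} to the shifted function $w := v-\M_K v$ and then eliminate the remaining $L^p(K)$-norms of $w$ by the classical Poincar\'e inequality. The key observation is that $\M_K v$ is constant on $K$, so $\nabla w = \nabla v$. Feeding $w$ into Lemma \ref{scaledTrace} therefore yields
\begin{align*}
	\norm{v-\M_K v}_{L^p(\sigma)} \leq C_\text{cti}\left(\norm{\nabla v}_{L^p(K)}^{\frac1p}+h_K^{-\frac1p}\norm{v-\M_K v}_{L^p(K)}^{\frac1p}\right)\norm{v-\M_K v}_{L^p(K)}^{1-\frac1p}.
\end{align*}

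Next I would invoke the Poincar\'e inequality in the mean-value form $\norm{v-\M_K v}_{L^p(K)} \leq C_{P,p}\,h_K\,\norm{\nabla v}_{L^p(K)}$ and substitute it into both occurrences of $\norm{v-\M_K v}_{L^p(K)}$ above, i.e. into the factor raised to the power $\tfrac1p$ and into the factor raised to the power $1-\tfrac1p$. The whole proof then reduces to bookkeeping of the powers of $h_K$. For the first summand one gets a factor $h_K^{1-\frac1p}$ directly from the outer factor, while for the second summand the prefactor $h_K^{-\frac1p}$ combines with $h_K^{\frac1p}$ from the first Poincar\'e substitution and $h_K^{1-\frac1p}$ from the second, again leaving $h_K^{1-\frac1p}$. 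In both cases the fractional powers of $\norm{\nabla v}_{L^p(K)}$ recombine to a single full power $\norm{\nabla v}_{L^p(K)}$, so that
\begin{align*}
	\norm{v-\M_K v}_{L^p(\sigma)} \leq C_\text{cti}\left(C_{P,p}^{1-\frac1p}+C_{P,p}\right)h_K^{1-\frac1p}\norm{\nabla v}_{L^p(K)},
\end{align*}
which gives the claim with $C_{\text{app},p}:=C_\text{cti}\bigl(C_{P,p}^{1-\frac1p}+C_{P,p}\bigr)$.

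I do not expect a genuine obstacle here; the only subtlety is to keep all constants uniform in $K$, which is guaranteed because both $C_\text{cti}$ (from Lemma \ref{scaledTrace}) and $C_{P,p}$ are independent of the cell, and to check that the exponents of $h_K$ collapse exactly to $1-\tfrac1p$ in each term, as verified above. The endpoint $p=\infty$ should be treated as the limiting case $\tfrac1p\to 0$: there Lemma \ref{scaledTrace} degenerates to $\norm{w}_{L^\infty(\sigma)}\leq C_\text{cti}\norm{w}_{L^\infty(K)}$, and a single application of the $L^\infty$-Poincar\'e inequality $\norm{v-\M_K v}_{L^\infty(K)}\leq C_{P,\infty}h_K\norm{\nabla v}_{L^\infty(K)}$ delivers the bound with the power $h_K^{1}=h_K^{1-\frac1\infty}$, consistent with the general formula.
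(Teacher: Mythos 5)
Your proposal is correct and follows essentially the same route as the paper's own proof: apply Lemma \ref{scaledTrace} to $v-\M_Kv$ (using $\nabla(v-\M_Kv)=\nabla v$), substitute the scaled Poincar\'e inequality $\norm{v-\M_Kv}_{L^p(K)}\leq C_{P,p}h_K\norm{\nabla v}_{L^p(K)}$ into both factors, and collect the powers of $h_K$, arriving at the identical constant $C_{\text{app},p}=C_\text{cti}\bigl(C_{P,p}^{1-\frac1p}+C_{P,p}\bigr)$. Your explicit handling of the endpoint $p=\infty$ is a small addition the paper leaves implicit, but it changes nothing of substance.
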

\begin{proof}
	Using Lemma \ref{scaledTrace} and Poincar\'e inequality we can bound 
	\begin{align*}
		\norm{v-\M_Kv}_{L^p(\sigma)}
		&\leq C_\text{cti}\left(\norm{\nabla v}_{L^p(K)}^{\frac1p}+h_K^{-\frac1p}\norm{v-\M_Kv}_{L^p(K)}^{\frac1p}\right)\norm{v-\M_Kv}_{L^p(K)}^{1-\frac1p}\\
		&\leq C_\text{cti}\left(\norm{\nabla v}_{L^p(K)}^{\frac1p}+C_{P,p}^{\frac1p}\norm{\nabla v}_{L^p(K)}^{\frac1p}\right)C_{P,p}^{1-\frac1p}h_K^{1-\frac1p}\norm{\nabla v}_{L^p(K)}^{1-\frac1p}\\
		&\leq C_\text{cti}(C_{P,p}^{1-\frac1p}+C_{P,p})h_K^{1-\frac1p}\norm{\nabla v}_{L^p(K)}
	\end{align*}
\end{proof}

In the following, we need a version of the Gagliardo-Nirenberg inequality for bounded domains.
A more general version was first proved in \cite{nirenberg1959} and the stated version can be found in \cite[Theorem 5.8]{adams2003}.
In \cite{adams2003}, the domain $\Omega$ needs to satisfy the cone condition.
Since we assume here that $\Omega$ has a locally Lipschitz boundary, the cone condition is satisfied for $\Omega$.
We use the Gagliardo-Nirenberg inequality for the stability framework in Section \ref{section:abstract_error} and present a simple proof such that we get an explicit bound on the constant $C_{G,p}$ that only depends on the constant $C_{S,q}$ in the Sobolev inequality for $q=\frac{2(d+p)}{d}$ for $d=2$ and $q=\frac{2d}{d-2}$ for $d>2$.
\begin{thm}\label{thm:Nirenberg}
	Let $2\leq p<\infty$ for $d\leq 2$ and $1\leq p<\frac{2d}{d-2}$ for $d>2$.
	There is a constant $C_{G,p}>0$ such that
	\begin{align*}
		\norm{u}_{L^p(\Omega)}\leq C_{G,p}\norm{u}_{H^1(\Omega)}^\theta\norm{u}_{L^2(\Omega)}^{1-\theta}\quad\forall u\in H^1(\Omega).
	\end{align*}
	with $\theta = \frac d2-\frac dp$.
	Furthermore, it holds $C_{G,p}\leq C_{S,q}^\theta$ with $q=\frac{2(d+p)}{d}$.
\end{thm}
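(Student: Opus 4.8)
The plan is to reduce the Gagliardo--Nirenberg estimate to two elementary ingredients: the multiplicative (Lyapunov/Hölder) interpolation inequality for Lebesgue norms, and the Sobolev embedding $H^1(\Omega)\hookrightarrow L^q(\Omega)$ with best constant $C_{S,q}$. Concretely, for a suitable exponent $q$ one first fixes the relation $\frac1p=\frac{\theta}{q}+\frac{1-\theta}{2}$ and applies Hölder's inequality to the splitting $\abs{u}^p=\abs{u}^{\theta p}\,\abs{u}^{(1-\theta)p}$ to obtain $\norm{u}_{L^p(\Omega)}\le\norm{u}_{L^q(\Omega)}^\theta\norm{u}_{L^2(\Omega)}^{1-\theta}$. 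Inserting the Sobolev bound $\norm{u}_{L^q(\Omega)}\le C_{S,q}\norm{u}_{H^1(\Omega)}$ then yields exactly $\norm{u}_{L^p(\Omega)}\le C_{S,q}^\theta\norm{u}_{H^1(\Omega)}^\theta\norm{u}_{L^2(\Omega)}^{1-\theta}$, i.e.\ $C_{G,p}\le C_{S,q}^\theta$. I would first carry this out for $u\in C^\infty(\overline\Omega)$ and extend to $u\in H^1(\Omega)$ by density, using that $\Omega$ is bounded with Lipschitz boundary so that both the embedding and the density statement are available.

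Second, I would determine $q$ from the interpolation identity. Substituting $\theta=\frac d2-\frac dp$ into $\frac1p=\frac\theta q+\frac{1-\theta}2$ and solving gives, for $d>2$, the value $q=\frac{2d}{d-2}$, the critical Sobolev exponent, together with the admissible range $2\le p\le\frac{2d}{d-2}$ (at $p=q$ the interpolation is the trivial identity). Since $H^1(\Omega)\hookrightarrow L^{2d/(d-2)}(\Omega)$ holds with constant $C_{S,2d/(d-2)}$, this case is complete and produces the claimed bound with the scaling exponent $\theta$.

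Third --- and this is where the main difficulty lies --- I would treat $d=2$ (and $d=1$) separately. Here the critical Sobolev exponent degenerates to $\infty$ and $H^1(\Omega)\not\hookrightarrow L^\infty(\Omega)$, so the interpolation against the critical space is unavailable and one cannot simply set $q=\infty$. The remedy is to use that in two dimensions $H^1(\Omega)\hookrightarrow L^q(\Omega)$ for every finite $q$, and to select the finite surrogate $q=\frac{2(d+p)}{d}=p+2$. A clean way to enforce the correct scaling exponent is the power substitution $w=\abs{u}^{p/2}$: since $\norm{w}_{L^2(\Omega)}^2=\norm{u}_{L^p(\Omega)}^p$, applying the embedding and Hölder to $w$ transfers the problem to controlling the gradient term $\norm{\,\abs{u}^{p/2-1}\nabla u\,}_{L^2(\Omega)}$, which is precisely the place where care is needed because $\nabla u$ is only assumed square integrable. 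I expect this $d=2$ bookkeeping --- choosing the finite exponent $q=p+2$, justifying the manipulation of $\abs{u}^{p/2}$ by density, and matching the resulting interpolation exponent to the scaling value $\theta=1-\frac2p$ while keeping the constant controlled by a single power of $C_{S,q}$ --- to be the main obstacle, whereas the $d>2$ case follows immediately from interpolation plus the critical Sobolev embedding.
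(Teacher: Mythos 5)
For $d>2$ your argument is complete and is precisely the paper's proof: the Hölder identity $\frac1p=\frac{1-\theta}{2}+\frac{\theta}{q}$ with $\theta=\frac d2-\frac dp$ forces the critical exponent $q=\frac{2d}{d-2}$, and inserting the Sobolev bound $\norm{u}_{L^q(\Omega)}\leq C_{S,q}\norm{u}_{H^1(\Omega)}$ gives $C_{G,p}\leq C_{S,q}^\theta$. The detour through $C^\infty(\overline\Omega)$ and density is unnecessary, since both ingredients hold directly for every $u\in H^1(\Omega)$ on a bounded Lipschitz domain, but it is harmless.

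The genuine gap is your case $d\leq 2$, which you do not actually prove: you end by declaring the gradient term ``the main obstacle,'' and the route you sketch would fail as written. To apply an $H^1$-based embedding to $w=\abs{u}^{p/2}$ you need $\nabla w=\frac p2\abs{u}^{p/2-1}\operatorname{sgn}(u)\nabla u\in L^2(\Omega)$, and this can genuinely fail for $u\in H^1(\Omega)$ when $d=2$ and $p>2$: taking $u=(\log(1/\abs{x}))^\alpha$ suitably cut off, one has $u\in H^1$ for every $\alpha<\frac12$, while $\abs{u}^{p-2}\abs{\nabla u}^2$ is non-integrable whenever $\frac1p\leq\alpha<\frac12$. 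Density does not repair this: for smooth $u$ the term is finite, but it admits no bound in terms of $\norm{u}_{H^1(\Omega)}$ and $\norm{u}_{L^2(\Omega)}$ alone, so no uniform inequality passes to the limit. The version of the power trick that does work replaces $H^1\hookrightarrow L^q$ by $W^{1,1}(\Omega)\hookrightarrow L^{2}(\Omega)$ (for $d=2$), so that the gradient term appears in $L^1$ and is controlled by Cauchy--Schwarz via $\norm{\abs{u}^{p/2-1}\nabla u}_{L^1(\Omega)}\leq\norm{u}_{L^{p-2}(\Omega)}^{(p-2)/2}\norm{\nabla u}_{L^2(\Omega)}$, followed by an induction over a chain of exponents --- none of which appears in your sketch. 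For comparison, the paper settles $d=2$ by the same one-line interpolation as for $d>2$, simply taking the finite exponent $q=\frac{2(d+p)}{d}=p+2$; note, though, that the exact Hölder identity with $q=p+2$ yields the interpolation exponent $\frac{(p-2)(p+2)}{p^2}$ rather than the scaling value $\theta=1-\frac2p$, so your instinct that a finite surrogate $q$ cannot reproduce $\theta$ exactly is in fact sound --- it points at a bookkeeping issue in the paper's own $d=2$ case (benign for the later applications, where any exponent strictly below $1$ suffices) --- but your alternative does not close the case either.
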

\begin{proof}
	Since $u\in H^1(\Omega)$, the Sobolev inequality, see \cite[Theorem 4.12]{adams2003}, implies that $u\in L^q(\Omega)$.
	The $L^p$ interpolation inequality, see \cite[Theorem 2.11]{adams2003}, implies
	\begin{align*}
		\norm{u}_{L^p(\Omega)}\leq \norm{u}_{L^2(\Omega)}^{1-\theta}\norm{u}^\theta_{L^q(\Omega)}
	\end{align*}
	with $q=\frac{2(d+p)}{d}$ for $d=2$ and $q=\frac{2d}{d-2}$ for $d>2$ and $\theta = \frac d2-\frac dp$.
	We apply the Sobolev inequality and obtain
	\begin{align*}
		\norm{u}_{L^p(\Omega)}\leq C_{S,q}^\theta\norm{u}_{H^1(\Omega)}^\theta\norm{u}_{L^2(\Omega)}^{1-\theta},
	\end{align*}
	where $C_{S,q}$ is the Sobolev imbedding constant.
\end{proof}
\section{Numerical scheme and reconstruction}\label{section:scheme}
We use the finite volume scheme from \cite{gerstenmayer2018} to solve \eqref{equations}-\eqref{eqn:initial} in $d=1,2,3$ space dimensions and present a way to reconstruct the numerical solution so that a conforming approximation is obtained.
We provide a posteriori error estimates by reconstructing the numerical solution, in a way similar to \cite{nicaise2005,nicaise2006}.
%, and using the reconstructed solution in the stability framework of Section \ref{section:abstract_error}.
%The basic ideas of the use of reconstructions in a posteriori error estimates can be found in \cite{makridakis2003,makridakis2007}.
There are also other finite volume schemes in the literature namely \cite{cances2019} and \cite{cances2024,cances2023}.
Since our goal is to initiate a posteriori error analysis for cross diffusion systems we opted to use the simpler finite volume scheme from \cite{gerstenmayer2018}.
In order to derive an a posteriori error estimate for the scheme from \cite{cances2023} we can combine the stability analysis from Section \ref{section:abstract_error} and the reconstruction presented below.
However, the use of this reconstruction would lead to an error estimator that is first order convergent, i.e. suboptimal for that scheme.
Deriving a reconstruction that provides a second order error estimator for the scheme from \cite{cances2023} is an interesting task but beyond the scope of this work.\\
\textcite{gerstenmayerFiniteelement2018} describes a structure preserving finite element method for \eqref{equations}-\eqref{eqn:initial}.
It solves the equation in the entropy variables and therefore asks for higher regularity conditions than the finite volume method presented below and all initial densities have to be positive everywhere.
\subsection{Finite Volume scheme}\label{subsec:FV}
In the following, we write $u_{i,K}^j$ for the approximation of $u_i$ on $K$ at time $t_j$.
We approximate the initial conditions as follows
\begin{align*}
	u_{i,K}^0 &= \M_K u_i^0,\quad\Phi_K^0 = \M_K\Phi^0
\end{align*}
for $i=0,\dots,n$ and $K\in\mathcal{T}$.
We recall the numerical scheme from \cite{gerstenmayer2018}. 
For $K\in\mathcal{T}$ and $j\in\{1,\dots,J\}$
\begin{align}\label{eqn:scheme}
	\abs{K}\frac{u_{i,K}^j-u_{i,K}^{j-1}}{\tau_j} + \sum_{\sigma\in\mathcal{E}_K} \mathcal{F}_{i,K,\sigma}^j &= 0\quad \forall i\in\{1,\dots,n\}\\
	u_{0,K}^j &= 1-\sum_{i=1}^nu_{i,K}^j\\
	-\lambda^2\sum_{\sigma\in\mathcal{E}_K} F_\sigma^K(\Phi^j) - \abs{K}\sum_{i=1}^n z_iu_{i,K}^j &= 0
	\label{eqn:scheme_phi}
\end{align}
with
\begin{align}
	\mathcal{F}^j_{i,K,\sigma} := -u_{0,\sigma}^jF_{\sigma}^K(u_i^j)+u_{i,\sigma}^j (F_{\sigma}^K(u_{0}^j)-u_{0,\sigma}^j\beta z_iF_{\sigma}^K(\Phi^j))\quad i=1,\dots,n
\end{align}
and
\begin{align*}
	F^K_{\sigma}(u_{i}^j) &:= \begin{cases}
		\frac{\abs{\sigma}}{d_\sigma}(u_{i,K}^j-u_{i,L}^j) &\text{if } \sigma = K\cap L\\
		0 &\text{if } \sigma \subset \Gamma_N\\
		\frac{\abs{\sigma}}{d_\sigma}u_{i,K}^j &\text{if } \sigma \subset \Gamma_D
	\end{cases},\quad
	u_{i,\sigma}^j
	:= \begin{cases}
		u_{i,K}^j &\text{if } \sigma\subset\Gamma_N\\
		\M_\sigma u_i^D &\text{if } \sigma\subset\Gamma_D\\
		\frac{u_{i,K}^j-u_{i,L}^j}{\log(u_{i,K}^j)-\log(u_{i,L}^j)}&\text{otherwise}
	\end{cases}\quad i=0,\dots,n.
\end{align*}
For the reconstruction of the solvent concentration we need a numerical equivalent of equation \eqref{eqn:solvent}.
Indeed, summing \eqref{eqn:scheme} over $i$ yields
\begin{align}\label{eqn:scheme_0}
	\abs{K}\frac{u_{0,K}^j-u_{0,K}^{j-1}}{\tau_j}-\sum_{\sigma\in\mathcal{E}_K}F^K_{\sigma}(u_0^j)\left(u_{0,\sigma}^j+\sum_{i=1}^n u_{i,\sigma}^j\right)+\beta F^K_{\sigma}(\Phi^j)u_{0,\sigma}^j\left(\sum_{i=1}^n z_iu_{i,\sigma}^j\right)=0.
\end{align}
We write $u_{i,h}$ for the piecewise constant functions on $\Omega$ given by $u_{i,h}(x) = u_{i,K}$ for all $x\in K$ and $i=0,\dots,n$.
Similarly, we define $\Phi_h$ by $\Phi_h(x)=\Phi_K$ for all $x\in K$.
\begin{remark}\label{remark:scheme}
	\begin{enumerate}[(i)]
		\item If the mesh $\mathcal{T}$ does not satisfy the orthogonality criterion, one can use the flux defined in \cite{coudiere1999} and get a similar scheme.
		%	In the following, we use only the conservation of mass, namely $F_{\sigma,K}^j(f) = -F_{\sigma,L}^j(f)$ for $\sigma=\partial K\cap\partial L$.
			What is crucial for our reconstruction to work is conservation of mass, i.e. $F_{\sigma,K}^j(f) = -F_{\sigma,L}^j(f)$ for $\sigma=\partial K\cap\partial L$, and this is also satisfied for the fluxes used in \cite{coudiere1999,nicaise2005,nicaise2006}.
		\item We use the logarithmic-mean to get an approximation of $u_i$ on the edges.
			\textcite{cances2019} use an upwind-scheme and \textcite{cances2023} use the arithmetic mean if $z_i=0$ ($i=1,\dots,n$).
			Using the arithmetic mean leads to similar results as using the logarithmic-mean in our tests.
			The same holds for the use of other means, e.g. the geometric mean.
	\end{enumerate}
\end{remark}
\subsection{Morley type reconstruction}\label{subsec:Morley}
We reconstruct the finite volume solution $(u_{0,h},\dots,u_{h,n},\Phi_h)$ defined in \eqref{eqn:scheme}-\eqref{eqn:scheme_phi}.
We present here only the case $d=2$.
The case $d=3$ works similarly on tetrahedral meshes.
For the spacial reconstruction we use a Morley type reconstruction similar to \cite{nicaise2005,nicaise2006}.
For every time step, we first interpolate the finite volume solution with continuous piecewise linear functions.
After that the integral of the outward normal derivative of the interpolant is adjusted on every edge to fit the numerical diffusive flux.\\
In \cite{nicaise2005} similar reconstructions are applied to the Poisson equation and in \cite{nicaise2006} to more general elliptic equations.
The main difference to \cite{nicaise2006} is that we do not fix averages along edges in the reconstruction process.
First, we reconstruct the solvent concentration $u_{0,h}$ and the electric potential $\Phi_h$, since they solve equations independent of the ion species.
We use the following polynomial space on $K\in\mathcal{T}$
\begin{align*}
	P_K &= \{q+pb_K\;:\;q\in\mathbb{P}_1(K),p\in\mathbb{P}_1(K)\}.
\end{align*}
For the solvent concentration and electric potential we use the degrees of freedom
\begin{align*}
	\Sigma_{K,0} &= \{v(a_i^K)\}_{i=1,2,3}\cup\left\{\int_\sigma\frac{\partial v}{\partial n_{K,\sigma}}dS\right\}_{\sigma\in\mathcal{E}_K}
\end{align*}
on every element $K\in\mathcal{T}$.
We define the Morley finite element space for the reconstructions of the $u_0^j$ and $\Phi^j$ by
\begin{align*}
	V_h^0 = \left\{v_h\in H^1_D(\Omega)\;:\;\right.&v_h|_K\in P_K\;\forall K\in\mathcal{T},\\
										&\int_\sigma \frac{\partial v_h|_K}{\partial n_{K,\sigma}} dS = -\int_\sigma \frac{\partial v_h|_L}{\partial n_{L,\sigma}} dS \;\forall \sigma\in\mathcal{E},K,L\in\mathcal{T}:\sigma = K\cap L,\\
										&\left.\int_\sigma \frac{\partial v_h|_K}{\partial n_{K,\sigma}} dS = 0 \;\forall K\in\mathcal{T}:\sigma\in\mathcal{E}^N\cap\mathcal{E}_K\right\}
\end{align*}
With the above notation on hand we can define the Morley type reconstruction for the solvent concentration and electric potential.
\begin{defn}\label{defn:Morley_solvent}
	Let the weights $w_L(a_i^K)$ ($i=1,\dots,3$ and $K,L\in\mathcal{T}$ with $a_i^K\in\overline{K}\cap\overline{L}$) be given according to \cite[Section 3.3]{coudiere1999}.
	\begin{enumerate}[(i)]
		\item For a finite volume solution $u_0^j$ of \eqref{eqn:scheme}-\eqref{eqn:scheme_phi} we define its Morley type reconstruction $\widehat{u_0}^j$ as the unique element $\widehat{u_0}^j\in V_h^0$ satisfying
			\begin{align*}
				\widehat{u_0}^j(a_i^K) &= \sum_{L\in\mathcal{T}:a_i^K\in \overline{L}}w_L(a_i^K)u_0^j(L)\quad\forall K\in\mathcal{T},i\in\{1,2,3\}\\
				\int_\sigma \frac{\partial \widehat{u_0}^j|_K}{\partial n_{K,\sigma}}\, dS &= F_{\sigma}^K(u_{0}^j)\left(u_{0,\sigma}^j+\sum_{i=1}^n u_{i,\sigma}^j\right)\quad\forall K\in\mathcal{T},\sigma\in \mathcal{E}_K.
			\end{align*}
		\item For a finite volume solution $\Phi^j$ of \eqref{eqn:scheme}-\eqref{eqn:scheme_phi} we define its Morley type reconstruction $\widehat{\Phi}^j$ as the unique element $\widehat{\Phi}^j\in V_h^0$ satisfying
			\begin{align*}
				\widehat{\Phi}^j(a_i^K) &= \sum_{L\in\mathcal{T}:a_i^K\in \overline{L}}w_L(a_i^K)\Phi^j(L)\quad\forall K\in\mathcal{T},i\in\{1,2,3\}\\
				\int_\sigma \frac{\partial \widehat{\Phi}^j|_K}{\partial n_{K,\sigma}}\, dS &= F_{\sigma}^K(\Phi^j)\quad\forall K\in\mathcal{T},\sigma\in \mathcal{E}_K.
			\end{align*}
	\end{enumerate}
\end{defn}
\begin{remark}
	\begin{enumerate}[(i)]
		\item
			The Morley type reconstruction for the solvent concentration is different from that presented in \cite{nicaise2005} and \cite{nicaise2006}.
			In \cite{nicaise2005} the $C^0$ reconstruction on triangles uses the polynomial base $q+pb_K$ for $q\in\mathbb{P}_2(K)$ instead of $\mathbb{P}_1(K)$ and use $\{p(m_i)\}_{i=1,2,3}$ (with $m_i$ the edge-midpoints of the triangle $K$) in addition to the degrees of freedom used here.
			For simplicity, we stick to the reconstruction presented in Definition \ref{defn:Morley_solvent}, since it uses less degrees of freedom and fulfills our needs.
		\item In the definition of $\widehat{u_0}^j$ we used $F_\sigma^K(u_0^j)\left(u_{0,\sigma}^j+\sum_{i=1}^nu_{i,\sigma}^j\right)$ instead of $F_\sigma^K(u_0^j)$ for the integral of the normal derivatives.
			In the proof of Theorem \ref{thm:H1_heat_bound} and Proposition \ref{prop:MorleyOrth}, we need that the integral of the normal derivatives is given by the numerical fluxes used in the finite volume scheme to derive the orthogonality condition \eqref{eqn:orth}.
			Recall that $\left(u_{0,\sigma}^j+\sum_{i=1}^nu_{i,\sigma}^j\right)\approx 1$.
	\end{enumerate}
\end{remark}
For the other species $u_i$ ($i=1,\dots,n$) we also use the polynomial space $P_K$, but with degrees of freedom
\begin{align*}
	\Sigma_K^j &= \{v(a_i^K)\}_{i=1,2,3}\cup\left\{\int_\sigma \widehat{u_0}^j\frac{\partial v}{\partial n_{K,\sigma}}dS\right\}_{\sigma\in\mathcal{E}_\sigma}
\end{align*}
on every triangle and time step $j=0,\dots,J$.
We now prove that $(K,P_K,\Sigma_K^j)$ defines a finite element for every $K\in\mathcal{T}$.
The proof is similar to \cite[Lemma 4.1]{nicaise2006} and \cite[Lemma 4.1]{nilssen2000}.
\begin{prop}
	If $\widehat{u_0}^j>0$ on $\overline{K}$, then the triple $(K,P_K,\Sigma_K^j)$ is a $C^0$-finite element.
\end{prop}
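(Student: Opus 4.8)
The plan is to establish the two defining properties of a finite element---equality of $\dim P_K$ with the number of functionals in $\Sigma_K^j$, together with unisolvence---and then to verify the $C^0$ interelement continuity separately. For the dimension count I would note that $b_K$ is a cubic bubble (the normalized product of the three barycentric coordinates $\lambda_1\lambda_2\lambda_3$), so that for nonzero $p\in\mathbb{P}_1(K)$ the product $pb_K$ has degree three or four; hence the sum $v=q+pb_K$ is direct and $\dim P_K=\dim\mathbb{P}_1(K)+\dim\mathbb{P}_1(K)=6$. Since a triangle has three vertices and three edges, $\Sigma_K^j$ contains exactly six functionals, so it remains to prove that the only $v\in P_K$ annihilating all of them is $v=0$.

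For unisolvence, suppose $v=q+pb_K$ is annihilated by every functional in $\Sigma_K^j$. As $b_K$ vanishes at the vertices, the vertex conditions give $q(a_i^K)=0$ for $i=1,2,3$, and a linear polynomial vanishing at the three vertices of $K$ is identically zero, so $v=pb_K$. On any edge $\sigma\in\mathcal{E}_K$ the bubble vanishes, whence the product rule yields $\frac{\partial v}{\partial n_{K,\sigma}}=p\,(\nabla b_K\cdot n_{K,\sigma})$ on $\sigma$. Writing $\sigma_i$ for the edge opposite $a_i^K$, on which $\lambda_i=0$, one computes $\nabla b_K\cdot n_{K,\sigma_i}=\lambda_j\lambda_k\,(\nabla\lambda_i\cdot n_{K,\sigma_i})$ there, with $\{i,j,k\}=\{1,2,3\}$ and $\nabla\lambda_i\cdot n_{K,\sigma_i}$ a strictly negative constant. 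Expanding $p$ in the basis $\{\lambda_1,\lambda_2,\lambda_3\}$ of $\mathbb{P}_1(K)$ and using $\lambda_i=0$ on $\sigma_i$, the three edge conditions become a homogeneous $3\times3$ linear system for the coefficients of $p$; its diagonal entries vanish (the $c_i$-term carries a factor $\lambda_i$ on $\sigma_i$), while each off-diagonal entry equals a strictly negative constant times an integral of the form $\int_{\sigma_i}\widehat{u_0}^j\,\lambda_j\lambda_k\lambda_m\,dS$ with $m\in\{j,k\}$.

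This is exactly the step where the hypothesis $\widehat{u_0}^j>0$ on $\overline{K}$ is used: it forces every such edge integral to be strictly positive, and expanding the determinant of the resulting matrix gives a sum of products of these positive quantities, hence a nonzero determinant. I expect this determinant computation to be the main obstacle, since without positivity of $\widehat{u_0}^j$ the integrals need not be sign-definite and the system could be singular, so unisolvence would genuinely fail. Invertibility forces all coefficients of $p$ to vanish, so $p=0$ and $v=0$, proving unisolvence. For the $C^0$ property I would finally observe that on each edge $\sigma$ the bubble part vanishes, so the trace of $v$ on $\sigma$ coincides with the trace of its linear part $q$, which is the affine interpolant of the two endpoint vertex values. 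Because these vertex values are the degrees of freedom shared by adjacent elements, the two one-sided traces on any interior edge agree, so the globally assembled functions are continuous; this is precisely the interelement conformity underlying $V_h^0\subset H^1_D(\Omega)$.
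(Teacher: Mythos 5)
Your proof is correct, and it shares the paper's skeleton --- the count $\dim P_K=6=\operatorname{card}\Sigma_K^j$, the vertex conditions forcing $q=0$, and the observation that on each edge $b_K$ vanishes so the weighted flux functional only sees $p$ times the factor $\lambda_j\lambda_k\,(\nabla\lambda_i\cdot n_{K,\sigma_i})$ --- but your concluding step is genuinely different from the paper's. The paper finishes qualitatively: since $\widehat{u_0}^j>0$ makes the weight sign-definite on the interior of each edge, the vanishing of each edge integral forces $p$ to have a root in the interior of every edge, and the zero set of a nonzero affine function is a line, which can meet the interiors of at most two edges of a triangle, whence $p=0$. You instead expand $p=\sum_i c_i\lambda_i$ and reduce the three edge conditions to a homogeneous hollow $3\times3$ system whose off-diagonal entries all have the same strict sign (this is exactly where $\widehat{u_0}^j>0$ enters, in the same place the paper uses it); the determinant of such a matrix is $a_{12}a_{23}a_{31}+a_{13}a_{21}a_{32}$, a sum of two products of three same-signed entries, hence nonzero, so $p=0$. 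Both routes are sound. Yours is more explicit and quantitative (one could in principle extract a stability constant from the determinant), while the paper's intermediate-value argument is coordinate-free; note, however, that neither transfers verbatim to $d=3$, since a plane can cross the interiors of all four faces of a tetrahedron and a hollow $4\times4$ matrix with positive entries need not be invertible, so the tetrahedral case the paper only sketches would require extra work under either strategy. Two further points in your favor: you actually verify the directness of the sum $\mathbb{P}_1(K)+\mathbb{P}_1(K)b_K$ and the $C^0$ interelement property (edge traces reduce to the affine part, fixed by the shared vertex values), both of which the paper asserts without proof; and your identity $\nabla b_K\cdot n_{K,\sigma_i}=\lambda_j\lambda_k\,(\nabla\lambda_i\cdot n_{K,\sigma_i})$ on $\sigma_i$ is in effect a corrected, explicit form of the paper's factorization $b_K=b_\sigma\lambda_\sigma$, in which the roles of the factor vanishing on $\sigma$ and the factor positive on its interior are swapped relative to the standard bubble definitions the paper cites.
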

\begin{proof}
	Since $\dim P_K =6 =\operatorname{card}\Sigma_K$ it suffices to show that if $w=q+pb_K\in P_K$
	\begin{align*}
		\lambda(w) = 0\quad\forall\lambda\in\Sigma_K^j
	\end{align*}
	implies $w=0$.
	The condition $0=w(a_i^K) = q(a_i^K)+p(a_i^K)b_K(a_i^K) = q(a_i^K)$ and $q\in \mathbb{P}_1(K)$ implies $q=0$.\\
	%We proceed similar to \cite[Lemma 4.1]{nilssen2000} to show $p=0$ on $K$.
	Let $\sigma\in\mathcal{E}_K$ be an arbritrary edge of $K$.
	Since the zero-set of $b_\sigma$ is a subset of the zero-set of $b_K$ we can factor out $b_\sigma$
	\begin{align*}
		b_K = b_\sigma \lambda_\sigma,
	\end{align*}
	with $\lambda_\sigma \neq 0$ in the interior of $\sigma$.
	The properties of $b_\sigma$ imply that $\frac{\partial b_\sigma}{\partial n_{\sigma,K}}<0$ on the interior of $\sigma$.
	For every edge $\sigma\in\mathcal{E}_K$ holds
	\begin{align*}
		0=\int_\sigma \widehat{u_0}^j\frac{\partial w}{\partial n_{\sigma,K}}\,dS
		=\int_\sigma \widehat{u_0}^j\frac{\partial (pb_K)}{\partial n_{\sigma,K}}\,dS
		=\int_\sigma \widehat{u_0}^jp\lambda_\sigma\frac{\partial b_\sigma}{\partial n_{\sigma,K}}\,dS.
	\end{align*}
	Thus, $\widehat{u_0}^jp$ has a root on the interior of every edge $\sigma\in\mathcal{E}_K$.
	Since $\widehat{u_0}^j>0$, we conclude, that $p\in\mathbb{P}_1(K)$ has a root on the interior of every edge $\sigma\in\mathcal{E}_K$.
	This implies $p=0$ and with the above $w=0$.
\end{proof}
We define the Morley finite element space for the reconstructions of $u_{i,h}^j$ by
\begin{align*}
	V_{j,h} = \left\{v_h\in H^1_D(\Omega):\right.&v_h|_K\in P_K\;\forall K\in\mathcal{T}\\
										&\int_\sigma \widehat{u_0}^j\frac{\partial v_h|_K}{\partial n_\sigma} dS = \int_\sigma \widehat{u_0}^j\frac{\partial v_h|_L}{\partial n_\sigma} dS \;\forall \sigma\in\mathcal{E},K,L\in\mathcal{T}:\sigma = K\cap L,\\
										&\left.\int_\sigma \widehat{u_0}^j\frac{\partial v_h|_K}{\partial n_\sigma} dS = 0 \;\forall \sigma\in\mathcal{E}^N\cap\mathcal{E}_K\right\}.
\end{align*}
\begin{defn}
	The Morley type reconstruction $\widehat{u_i}^j$ for the species $u_i^j$ at time $j$ is the unique element $\widehat{u_i}^j\in V_{j,h}$ satisfying
	\begin{align*}
		\widehat{u_i}^j(a_i^K) &= \sum_{L\in\mathcal{T}:a_i^K\in \overline{L}}w_L(a_i^K)u_{i,L}^j\quad\forall K\in\mathcal{T},i\in\{1,2,3\}\\
		\int_\sigma \widehat{u_0}^j\frac{\partial \widehat{u_i}^j|_K}{\partial n_{K,\sigma}}\, dS &= u_{0,\sigma}^jF^K_\sigma(u_{i}^j)\quad\forall K\in\mathcal{T},\sigma\in \mathcal{E}_K,
	\end{align*}
	where we again use the weights $w_L(a_i^K)$ chosen according to \cite{coudiere1999}
\end{defn}
\begin{remark}\label{remark:convection}
	\textcite{nicaise2006} uses terms of the form $\alpha_\sigma b_\sigma b_K$ ($\alpha_\sigma\in\R$ for every edge $\sigma\in\mathcal{E}_K$) to add degrees of freedom corresponding to the convective fluxes.
	In our experiments, the convection term $u_{i,\sigma}^jF_\sigma^K(u_0^j)$ is large compared to $u_i^j$ so that the coefficient $\alpha_\sigma$ becomes large and introduces artificial bumps of the form $b_\sigma b_K$ in the middle of the triangle $K$.
	This yields convergence rates of order $\frac12$ rather than linear convergence.
\end{remark}
For the reconstruction in time we use a linear interpolation between the time steps.
Namely, we denote the space-time reconstruction by
\begin{align*}
	\widehat{u_i}(t) := \frac{t-t_{j-1}}{\tau_j} \widehat{u_i}^j + \frac{t_j-t}{\tau_j} \widehat{u_i}^{j-1}\quad\text{ for }i=0,\dots,n,
\end{align*}
for $t\in[t_{j-1},t_j]$.
We use a linear reconstruction in time, since it is easy and is sufficient for our needs.
\section{Error estimates for the diffusion equation}\label{section:heat}
The solvent concentration in the reduced model \eqref{eqn:reduced} solves the diffusion equation.
In Theorem \ref{thm:abstract_error}, we derive a bound of the difference $u_i-v_i$ that requires bounds for $\norm{u_0-v_0}_{L^\infty([0,T]\times\Omega)}$ and $\norm{\nabla (u_0-v_0)}_{L^2([0,T]\times\Omega)}$.
In this section, we derive such bounds if $u_0$ is given by the reconstruction of the finite volume solution given in Section \ref{section:scheme}.
Following a similar strategy as in \cite{kyza2018} and \cite{demlow2023}, we first derive the $L^\infty$ bound with the help of Green's function for the Poisson equation on $\Omega$.
The proof presented below uses a uniform with respect to the second argument bound on the $W^{1,p}$ norm of Green's function on $\Omega$ for some $p^*\in\left[1,\frac{d}{d-1}\right)$ (see Condition \ref{conditions} (C6)).
For the $L^2$ bound of the gradient of $u_0-v_0$ we use the stability estimate from \cite[Proposition 4.5]{bartels2016}.
The residual is then estimated by the properties of the reconstruction similar to \cite{nicaise2005,nicaise2006}.\\
\subsection{$L^\infty$ norm estimates for the diffusion equation}
We look at the classical diffusion equation with Dirichlet and Neumann boundary conditions
\begin{align}\label{eqn:heat}
	\begin{split}
		\partial_t v&= \Delta v\quad \text{on }\Omega\times[0,T]\\
		v &= v^D \quad \text{on }\Gamma_D\times[0,T]\\
		\nabla v\cdot n &= 0\quad\text{on }\Gamma_N\times[0,T]
	\end{split}
\end{align}
with $\Omega,\Gamma_D$ and $\Gamma_N$ as in Conditions \ref{conditions} (C1),(C7) and initial and boundary conditions satisfying Conditions (C2),(C3),(C6).
More on these conditions in Remark \ref{remark:uniform}.
%{\change Furthermore, assume that $u^D$ is constant in time and on connected subsets of $\Gamma_D$ and there exists a harmonic continuation $u^D\in W^{1,q}(\Omega)$ with $q>d$ and $\nabla u_D\cdot n=0$.}% in the dual space of $H^{\frac12}(\Gamma_N)$.}
To show an upper error bound in the maximum norm for the reconstruction of the finite volume solution we follow the approach of \cite{kyza2018, demlow2012, demlow2023}.
In the upcoming discussion we need Green's function $G$ for the domain $\Omega$ satisfying
\begin{align*}
	-\Delta_x G(x;y) = \delta(y-x)\quad \forall x,y\in\Omega,
\end{align*}
with boundary conditions
\begin{align*}
	\nabla G\cdot n &= 0\quad\text{on }\Gamma_N\\
	G &=0\quad\text{on }\Gamma_D.\\
\end{align*}
This yields using Green's identity
\begin{align}\label{eqn:Green}
	f(y) = \int_\Omega \nabla G(x;y)\cdot\nabla f(x)\;dx\quad\forall f\in H^1_D(\Omega)\cap W^{1,q}(\Omega)
\end{align}
with $q> d$ which implies that $W^{1,q}(\Omega)$ embedds into $C^0(\Omega)$.
In the following, we use an elliptic reconstruction in every time step $t_j$.
We define the elliptic reconstruction $w^j$ for time step $j$ as the solution of
\begin{align}\label{def_ell_recon}
	\int_\Omega \nabla w^j\cdot\nabla f\,dx &= \int_\Omega A^jf\,dx\quad\forall f\in H^1_D(\Omega)
\end{align}
with
\begin{align}\label{eqn:def_A^j}
	A^j := \begin{cases}
		-\Delta_h u_{0,h}^0 &\text{if } j=0\\
			-\frac{u_0^j-u_0^{j-1}}{\tau_j} &\text{if } j>0.
		\end{cases}
\end{align}
and boundary conditions
\begin{align*}
	\nabla w^j\cdot n &= 0\quad\text{on }\Gamma_N\\
	w^j &= u^D\quad\text{on }\Gamma_D.
\end{align*}
In \eqref{eqn:def_A^j}, we use the discrete Laplacian defined by%$\Delta_h:V_h^0\to V_h^0$ defined for $f_h\in V_h^0$ by (similar to \cite[Definition 4.13]{bartels2016})
\begin{align*}
%	\left(-\Delta_h f_h,g_h\right) :=\left(\nabla f_h,\nabla g_h\right)\quad\forall g_h\in V_h^0.
	-\Delta_h u_{0,h}^0|_K =\frac1{\abs{K}}\sum_{\sigma\in\mathcal{E}_K}\left(u_{0,\sigma}^j+\sum_{i=1}^nu_{i,\sigma}^j\right) F_\sigma^K(u_0^0)\quad\forall K\in\mathcal{T}.
\end{align*}
The reason for this is, that we cannot use $\Delta \widehat{u_0}^0$ for $w^0$, since $\Delta\widehat{u_0}^0\notin L^2(\Omega)$ and we need $-A^0\in L^2(\Omega)$ in Lemma \ref{lemma:ParabolicBound}.
Condition \ref{conditions} (C6) and (C7) then guarantee that the elliptic reconstruction $w^0\in H^2(\Omega)$ (see Remark \ref{remark:uniform}.\\
Following \cite{demlow2009}, we split the error $e:=v_0-\widehat{u_0}$ into an elliptic error $\varepsilon := w-\widehat{u_0}$ and a parabolic error $\rho:=v_0-w$.
Due to \eqref{eqn:heat}, $\widehat{u_0}^j$ can be understood as a numerical approximation of the Poisson equation $-\Delta w = -A^j$.
Thus, we can use error estimates for elliptic problems to get a pointwise error bound for $\varepsilon$.
The parabolic error $\rho$ solves the diffusion equation
\begin{align}\label{eqn:parabolic_error}
	\partial_t \rho -\Delta \rho = R(t)-\partial_t\varepsilon
\end{align}
with the temporal residual $R(t):=-\ell_{j-1}A^{j-1}-\ell_jA^j-\frac{\widehat{u_0}^j-\widehat{u_0}^{j-1}}{\tau_j}$ for $t\in[t_{j-1},t_j]$.
Therefore, we can use Duhamel's principle and the heat semigroup to bound $\rho$.\\
We will proceed as follows: First, we bound $\varepsilon$ pointwise using the properties of the Morley type reconstruction and Green's function at each time level $t_j$.
We can represent the elliptic error at time $t_j$ by \eqref{eqn:Green} and obtain a pointwise bound by estimating the right hand side.
After that, we use the $L^\infty$-contractivity of the heat semigroup on $\Omega$ with homogeneous Neumann and Dirichlet boundary conditions to bound $\norm{\rho}_{L^\infty([0,T]\times\Omega)}$.
For this we also use the elliptic error estimate that was derived in the first step.
Combining the error estimates for $\norm{\varepsilon^j}_{L^\infty([0,T]\times\Omega)}$ and $\norm{\rho}_{L^\infty([0,T]\times\Omega)}$, we obtain the error estimate for $\norm{e}_{L^\infty([0,T]\times\Omega)}$.
%\begin{ass}\label{ass:uniform}\hfill
%	To establish the maximum norm error bound, we need a uniform bound of the $W^{1,p^*}_D(\Omega)$ norm of Green's function {\changes for} some $1\leq p^*\leq \frac{d}{d-1}$, i.e. {\changes there exists $C_{\text{Green},p^*}$ such that} $G$ satisfies
%	\begin{align*}
%		\norm{\nabla_xG(x;y)}_{L^{p^*}(\Omega)}\leq C_{\text{Green},{p^*}}\quad\forall y\in\Omega.
%	\end{align*}
%	{\change Additionally, assume that the Dirichlet boundary conditions $u^D$ are constant in time and on connected subsets of $\Gamma_D$ and there exists a harmonic continuation $u^D\in W^{1,q}(\Omega)$ with $\nabla u^D\cdot n=0$ in the dual space of $H^{\frac12}(\Gamma_N)$ such that $\frac1q+\frac1{p^*}=1$.}
%\end{ass}
\begin{remark}\label{remark:uniform}
%	\begin{enumerate}[(i)]
%		\item 	
%			There are certain cases where \ref{conditions} (C6) can be proven to hold.
%			These are
%			\begin{itemize}
%				\item for $d\geq 2$, $\Gamma_D=\partial\Omega$ and $1\leq p^*<\frac{d}{d-1}$. The proof for the case $d=2$ can be found in \cite[Theorem 2.12, Remark 2.19]{dong2009} and for $d>2$ in \cite[Theorem 1.1]{gurter1982}.
%				\item for $d\geq 2$, $\Gamma_N=\partial\Omega$ and $1\leq p^*<\frac{d}{d-1}$. The proof can be found in \cite[Theorem 1]{mitrea2010}.
%			\end{itemize}
%			Condition \ref{conditions} (C6) holds according to \cite[Theorem 1.1]{gurter1982}, for $d>2$, $\Gamma_D =\partial\Omega$ and $1\leq p^*<\frac{d}{d-1}$.
%			For $d=2$ Condition \ref{conditions} (C6) holds according to \cite[Theorem 2.12, Remark 2.19]{dong2009} and \cite[Theorem 2.2]{demlow2012} for $\partial\Omega=\Gamma_D$ and $1\leq p^*<\frac{d}{d-1}$.
%			For $\Gamma_N=\partial\Omega$ and $1\leq p^*<\frac{d}{d-1}$, Condition \ref{conditions} (C6) is true according to \cite[Theorem 1]{mitrea2010}.
%			In most circumstances the constant $C_{\text{Green},p}$ is not accessible. 
%			In our numerical experiments in Section \ref{section:numerics}, we set $C_{\text{Green},p}=1$.
%		\item
			Condition \ref{conditions} (C6) and (C7) guarantees that the elliptic reconstruction satisfies $w^j\in W^{1,q}(\Omega)$ with $\frac1q+\frac1{p^*}=1$.
			Indeed, the elliptic reconstruction $w^j$ can be decomposed $w^j=\tilde{w}^j+u^D$ with $\tilde{w}^j\in H^1_D(\Omega)$.
			Using \eqref{def_ell_recon} we can write
			\begin{align*}
				w^j(y) = \int_\Omega G(x;y) A^j(x)\,dx+u^D(y).
			\end{align*}
			Since $A^j\in L^\infty(\Omega)$, we get that $w^j\in W^{1,q}(\Omega)$.
%	\end{enumerate}
\end{remark}
We first prove a bound of the elliptic error $\varepsilon$ for every time step.
The next proposition establishes a quasi-orthogonality property for $\varepsilon$ to piecewiese constant functions similar to \cite[Lemma 5.3]{nicaise2005}.
Since we cannot use piecewiese constant functions as test functions in the weak formulation of the Poisson problem, the error cannot be orthogonal to those functions.
The proposition below is central to the following considerations.
\begin{prop}\label{prop:MorleyOrth}
	Let $1\leq p\leq p^*$.
	The elliptic error $\varepsilon^j := w^j -\widehat{u_0}^j$ at time step $j$ satisfies
	\begin{multline*}
		\int_\Omega\nabla (w^j-\widehat{u_0}^j)\cdot\nabla f\,dx = \sum_{K\in \mathcal{T}}\left(\int_K (-A^j-\Delta \widehat{u_0}^j)(f-\M_K f) dx\right.\\
										\left.-\frac12\sum_{\sigma\in\mathcal{E}_K\setminus\mathcal{E}^D}\int_{\sigma} \jump{\nabla \widehat{u_0}^j}(f-\M_K f) dS\right) \quad \forall f\in W^{1,p}_D(\Omega)
	\end{multline*}
\end{prop}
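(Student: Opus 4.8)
The plan is to start directly from the weak characterisation \eqref{def_ell_recon} of the elliptic reconstruction and to integrate the discrete part by parts element by element. By \eqref{def_ell_recon},
\[
\int_\Omega\nabla(w^j-\widehat{u_0}^j)\cdot\nabla f\,dx = \int_\Omega A^j f\,dx - \sum_{K\in\mathcal{T}}\int_K\nabla\widehat{u_0}^j\cdot\nabla f\,dx,
\]
and since $\widehat{u_0}^j|_K\in P_K$ is polynomial, Green's formula on each simplex gives
\[
\int_K\nabla\widehat{u_0}^j\cdot\nabla f\,dx = -\int_K\Delta\widehat{u_0}^j\,f\,dx + \sum_{\sigma\in\mathcal{E}_K}\int_\sigma\frac{\partial\widehat{u_0}^j|_K}{\partial n_{K,\sigma}}f\,dS.
\]
I would then regroup the boundary integrals edge by edge. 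For $\sigma\in\mathcal{E}^D$ the factor $f$ vanishes because $f\in W^{1,p}_D(\Omega)$; for interior and Neumann edges the two (respectively one) one-sided contributions combine, by the definition of $\jump{\cdot}$, into $\int_\sigma\jump{\nabla\widehat{u_0}^j}f\,dS$. This produces the exact identity
\[
\int_\Omega\nabla(w^j-\widehat{u_0}^j)\cdot\nabla f\,dx = \sum_{K\in\mathcal{T}}\int_K(A^j+\Delta\widehat{u_0}^j)f\,dx - \sum_{\sigma\in\mathcal{E}^i\cup\mathcal{E}^N}\int_\sigma\jump{\nabla\widehat{u_0}^j}f\,dS.
\]

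The heart of the argument is to replace $f$ by $f-\M_K f$ in every term. For the volume contributions this rests on the cell-wise cancellation
\[
\int_K(A^j+\Delta\widehat{u_0}^j)\,dx = 0\qquad\forall K\in\mathcal{T},
\]
which I would obtain by matching two ingredients: on one hand, by the divergence theorem together with the edge degrees of freedom fixed in Definition \ref{defn:Morley_solvent}, $\int_K\Delta\widehat{u_0}^j\,dx=\sum_{\sigma\in\mathcal{E}_K}\int_\sigma\frac{\partial\widehat{u_0}^j|_K}{\partial n_{K,\sigma}}\,dS=\sum_{\sigma\in\mathcal{E}_K}F_\sigma^K(u_0^j)\big(u_{0,\sigma}^j+\sum_{i=1}^n u_{i,\sigma}^j\big)$; on the other hand $\int_K A^j\,dx$ reproduces, up to sign, exactly the same flux sum, via the solvent scheme \eqref{eqn:scheme_0} with $z=0$ for $j>0$ and via the definition of the discrete Laplacian $\Delta_h$ for $j=0$. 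This is precisely why the reconstruction was built with the weighted fluxes $F_\sigma^K(u_0^j)\big(u_{0,\sigma}^j+\sum_{i=1}^n u_{i,\sigma}^j\big)$ rather than $F_\sigma^K(u_0^j)$. Since $\M_K f$ is constant on $K$, the cancellation lets me subtract it freely, turning $\int_K(A^j+\Delta\widehat{u_0}^j)f\,dx$ into $\int_K(A^j+\Delta\widehat{u_0}^j)(f-\M_K f)\,dx$, which is the $-A^j-\Delta\widehat{u_0}^j$ term of the claim.

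For the edge contributions the analogous tool is the orthogonality $\int_\sigma\jump{\nabla\widehat{u_0}^j}\,dS=0$, valid for every $\sigma\in\mathcal{E}^i\cup\mathcal{E}^N$: the conformity and Neumann constraints defining $V_h^0$ force the two one-sided normal-derivative integrals to cancel on interior edges and the single one to vanish on Neumann edges. Hence $\int_\sigma\jump{\nabla\widehat{u_0}^j}\,c\,dS=0$ for any constant $c$, so each edge integral is unchanged under $f\mapsto f-\M_K f$. Writing $\sum_{\sigma\in\mathcal{E}^i\cup\mathcal{E}^N}$ as $\tfrac12\sum_{K}\sum_{\sigma\in\mathcal{E}_K\setminus\mathcal{E}^D}$, an interior edge $\sigma=K\cap L$ is visited from both sides; inserting $\M_K f$ and $\M_L f$ respectively by orthogonality, the two half-contributions recombine into the single term $\int_\sigma\jump{\nabla\widehat{u_0}^j}f\,dS$, which is where the factor $\tfrac12$ of the statement originates.

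I expect the main obstacle to be exactly this edge bookkeeping. Rewriting $\sum_{\sigma\in\mathcal{E}^i\cup\mathcal{E}^N}\int_\sigma(\cdots)$ as $\tfrac12\sum_K\sum_{\sigma\in\mathcal{E}_K\setminus\mathcal{E}^D}\int_\sigma(\cdots)$ is transparent for interior edges (counted twice) but delicate for Neumann edges, which are counted only once; one must verify carefully that the Neumann constraint in $V_h^0$ is exactly what makes the weighting consistent there, and keep track of the orientation of $\jump{\nabla\widehat{u_0}^j}$ so that the one-sided means $\M_K f$ and $\M_L f$ both drop out. Everything else — the element-wise Green formula, the vanishing of the Dirichlet edges since $f\in W^{1,p}_D(\Omega)$, and the two orthogonality relations — becomes routine once the reconstruction's degrees of freedom from Definition \ref{defn:Morley_solvent} and the scheme \eqref{eqn:scheme_0} are substituted.
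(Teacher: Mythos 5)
Your proposal is correct and follows essentially the same route as the paper's proof: the cell-wise mean-value identity $\int_K (A^j+\Delta \widehat{u_0}^j)\,dx=0$ (the paper's \eqref{eqn:same_mean}, obtained exactly as you say from the edge degrees of freedom in Definition \ref{defn:Morley_solvent} matched against the solvent scheme \eqref{eqn:scheme_0} for $j>0$ and the discrete Laplacian for $j=0$), element-wise Green's formula, and the vanishing edge averages $\int_\sigma \jump{\nabla \widehat{u_0}^j}\,dS=0$ enforced by the $V_h^0$ constraints, which together allow inserting $\M_K f$ in both the volume and the jump terms. The one delicate point you flag — Neumann edges being visited only once in the $\frac12$-weighted double sum $\frac12\sum_{K}\sum_{\sigma\in\mathcal{E}_K\setminus\mathcal{E}^D}$ — is genuine but is glossed over in the paper's own proof in exactly the same way (as written, Neumann edges enter with half their exact weight, and the sign of the volume term $(-A^j-\Delta\widehat{u_0}^j)$ versus your $(A^j+\Delta\widehat{u_0}^j)$ is likewise a benign discrepancy; both are immaterial downstream since Lemma \ref{lemma:EllipticBound} only uses norms of these quantities), so your argument is on the same footing as the paper's.
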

\begin{proof}
	Let $f\in W^{1,p}_D(\Omega)$.
	Similar to \cite[Lemma 5.3]{nicaise2005}, we conclude using the definition of the reconstruction (see Definition \ref{defn:Morley_solvent})
	\begin{align}\label{eqn:same_mean}
		\int_K \Delta \widehat{u_0}^j\,dx
		=\sum_{\sigma\in\mathcal{E}_K}\int_\sigma\nabla \widehat{u_0}^j\cdot n_{K,\sigma}\,dS
		=\sum_{\sigma\in\mathcal{E}_K} \left(u_{0,\sigma}^j+\sum_{i=1}^nu_{i,\sigma}^j\right)F_\sigma^K(u_0^j)
		= \int_K -A^j\,dx
	\end{align}
	for every $K\in\mathcal{T}$.
	With \eqref{eqn:same_mean} and integration by parts we derive
	\begin{align}\label{eqn:orth}
		\int_\Omega \nabla (w^j-\widehat{u_0}^j)\cdot\nabla f\,dx
		&=\sum_{K\in \mathcal{T}}\int_K (-A^j-\Delta \widehat{u_0}^j)(f-\M_K f) dx -\frac12\sum_{K\in\mathcal{T}}\sum_{\sigma\in\mathcal{E}_K\setminus\mathcal{E}^D}\int_\sigma\left\llbracket\nabla \widehat{u_0}^j\right\rrbracket f ds
	\end{align}
	Due to the conservation property of the numerical flux, i.e.\\
	$\int_\sigma \nabla \widehat{u_0}^j\cdot n_{K,\sigma}\,dS = F_\sigma^K(u_0^j)=-F_\sigma^L(u_0^j)=-\int_\sigma \nabla \widehat{u_0}^j\cdot n_{L,\sigma}\,dS$ for $\sigma = \overline{K}\cap\overline{L}$, we get
	\begin{multline*}
		\int_\Omega\nabla (w^j-\widehat{u_0}^j)\cdot\nabla f\,dx=\sum_{K\in \mathcal{T}}\int_K (-A^j-\Delta \widehat{u_0}^j)(f-\M_K f)\, dx\\
		-\frac12\sum_{K\in\mathcal{T}}\sum_{\sigma\in\mathcal{E}_K\setminus\mathcal{E}^D}\int_\sigma\left\llbracket\nabla \widehat{u_0}^j\right\rrbracket (f-\M_K f) dS.
	\end{multline*}
\end{proof}
We can now estimate the elliptic error in the maximum norm for every time step.
For this we use Proposition \ref{prop:MorleyOrth} with $f=G$ and Theorem \ref{scaledTrace} to estimate the jump terms.
\begin{lemma}\label{lemma:EllipticBound}
	If Condition \ref{conditions} (C1)-(C3),(C6) and (C7) are satisfied with $p^*$, then the elliptic error $\varepsilon^j$ at time $t_j$ can be bounded by
	\begin{align*}
		\norm{\varepsilon^j}_{L^\infty}\leq \eta_{S,q}^jC_{\text{Green},p}
	\end{align*}
	with $\frac1{p^*}+\frac1q=1$ and
	\begin{align*}
		\eta_{S,q}^j &:=
		\begin{dcases}
			2^{\frac1{p^*}}\left(\sum_{K\in\mathcal{T}}\left(C_{P,p^*}^qh_K^q\norm{A^j+\Delta \widehat{u_0}^j}_{L^q(K)}^q+\frac{C_{\text{app},p^*}^qN_\partial^{q-1}}{2^q}h_K\sum_{\sigma\in\mathcal{E}_K\setminus\mathcal{E}^D}\norm{\llbracket\nabla \widehat{u_0}^j\rrbracket}_{L^q(\sigma)}^q\right)\right)^\frac1q &\text{for }q<\infty\\[2\baselineskip]
			\max_{K\in\mathcal{T}}\left(C_{P,1}h_K\norm{A^j+\Delta \widehat{u_0}^j}_{L^\infty(K)}+\frac{C_{\text{app},1}}{2}\sum_{\sigma\in\mathcal{E}_K\setminus\mathcal{E}^D}\norm{\llbracket\nabla \widehat{u_0}^j\rrbracket}_{L^\infty(\sigma)}\right)&\text{for }q=\infty.
		\end{dcases}
	\end{align*}
\end{lemma}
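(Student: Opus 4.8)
The plan is to represent the elliptic error pointwise via Green's function and then substitute the quasi-orthogonality identity of Proposition \ref{prop:MorleyOrth}. Fix $y\in\Omega$. By Remark \ref{remark:uniform} we have $w^j\in W^{1,q}(\Omega)$, while $\widehat{u_0}^j$ is a continuous piecewise polynomial; moreover Condition \ref{conditions} (C3) guarantees that $w^j$ and $\widehat{u_0}^j$ attain the same Dirichlet data on $\Gamma_D$, so $\varepsilon^j=w^j-\widehat{u_0}^j\in H^1_D(\Omega)\cap W^{1,q}(\Omega)$ with $q>d$. Hence the representation \eqref{eqn:Green} applies and yields
\[
\varepsilon^j(y)=\int_\Omega \nabla G(\cdot;y)\cdot\nabla \varepsilon^j\,dx=\int_\Omega \nabla(w^j-\widehat{u_0}^j)\cdot\nabla G(\cdot;y)\,dx.
\]
Condition \ref{conditions} (C6) gives $G(\cdot;y)\in W^{1,p^*}_D(\Omega)$, and thus $G(\cdot;y)\in W^{1,p}_D(\Omega)$ for every $1\leq p\leq p^*$ since $\Omega$ is bounded. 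I may therefore take $f=G(\cdot;y)$ in Proposition \ref{prop:MorleyOrth} to rewrite $\varepsilon^j(y)$ as a sum over $K\in\mathcal{T}$ of element-residual terms $\int_K(-A^j-\Delta\widehat{u_0}^j)(G-\M_K G)\,dx$ and jump terms $\tfrac12\int_\sigma\jump{\nabla\widehat{u_0}^j}(G-\M_K G)\,dS$.

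The next step bounds each local contribution by a residual indicator multiplied by $\norm{\nabla G(\cdot;y)}_{L^{p^*}(K)}$. On each element I apply Hölder's inequality with the conjugate exponents $q$ and $p^*$, combined with the Poincaré inequality $\norm{G-\M_K G}_{L^{p^*}(K)}\leq C_{P,p^*}h_K\norm{\nabla G}_{L^{p^*}(K)}$ for the volume term, and with the edge approximation estimate of Theorem \ref{pFaceInterpolation}, $\norm{G-\M_K G}_{L^{p^*}(\sigma)}\leq C_{\text{app},p^*}h_K^{1-1/p^*}\norm{\nabla G}_{L^{p^*}(K)}$, for the jump terms. Collecting the two contributions on $K$ into one local weight $a_K$ and applying the discrete Hölder inequality over the elements gives $\abs{\varepsilon^j(y)}\leq\big(\sum_K a_K^q\big)^{1/q}\big(\sum_K\norm{\nabla G}_{L^{p^*}(K)}^{p^*}\big)^{1/p^*}$, whose second factor is exactly $\norm{\nabla G(\cdot;y)}_{L^{p^*}(\Omega)}\leq C_{\text{Green},p^*}$ by (C6).

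It then remains to unfold $\big(\sum_K a_K^q\big)^{1/q}$ into the stated form of $\eta_{S,q}^j$. Splitting $a_K$ into its volume and jump parts, the convexity bound $(a+b)^q\leq 2^{q-1}(a^q+b^q)$ produces the prefactor $2^{1/p^*}=2^{(q-1)/q}$; the local edge count $\abs{\mathcal{E}_K\setminus\mathcal{E}^D}\leq N_\partial$ together with the power-mean inequality converts the sum of edge norms into $N_\partial^{q-1}\sum_\sigma\norm{\jump{\nabla\widehat{u_0}^j}}_{L^q(\sigma)}^q$; and the identity $1/p^*+1/q=1$ collapses the edge power $h_K^{(1-1/p^*)q}$ to $h_K$. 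Since $y\in\Omega$ is arbitrary, taking the supremum yields the claimed $L^\infty$ bound for $q<\infty$. The $q=\infty$ (hence $p^*=1$) case is obtained by replacing the discrete Hölder step with $\sum_K(\cdots)\norm{\nabla G}_{L^1(K)}\leq\max_K(\cdots)\norm{\nabla G}_{L^1(\Omega)}\leq\max_K(\cdots)C_{\text{Green},1}$. The only genuinely delicate point is the exponent bookkeeping in this final step---checking that the weights, the factor $2^{1/p^*}$, the count $N_\partial^{q-1}$, and the power $h_K$ assemble precisely into $\eta_{S,q}^j$; the remaining estimates are direct invocations of the cited inequalities.
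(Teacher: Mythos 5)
Your proposal is correct and follows essentially the same route as the paper: Green's representation \eqref{eqn:Green} applied to $\varepsilon^j$, the quasi-orthogonality identity of Proposition \ref{prop:MorleyOrth} with $f=G(\cdot;y)$, elementwise H\"older with the Poincar\'e inequality and Theorem \ref{pFaceInterpolation}, and a discrete H\"older step producing the factors $2^{1/p^*}$, $N_\partial^{q-1}$ and the collapsed edge power $h_K$, with (C6) bounding $\norm{\nabla G(\cdot;y)}_{L^{p^*}(\Omega)}$ uniformly in $y$. Your explicit verification that $\varepsilon^j\in H^1_D(\Omega)\cap W^{1,q}(\Omega)$ (so that \eqref{eqn:Green} applies) and that $G(\cdot;y)$ is an admissible test function in Proposition \ref{prop:MorleyOrth} is a detail the paper leaves implicit, but it does not change the argument.
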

\begin{proof}
	Using \eqref{eqn:Green} and Proposition \ref{prop:MorleyOrth} yields
	\begin{align*}
		(w^j-\widehat{u_0}^j)(y)
		&=\int_\Omega \nabla (w^j-\widehat{u_0}^j)\cdot\nabla G(\cdot;y)\,dx\\
		&=\sum_{K\in\mathcal{T}}\int_K (-A^j-\Delta \widehat{u_0}^j)(G-\M_K G) dx
		-\frac12\sum_{\sigma\in\mathcal{E}_K\setminus\mathcal{E}^D}\int_{\sigma} \left\llbracket\nabla \widehat{u_0}^j \right\rrbracket(G-\M_K G)\, dS\\
		&\leq \sum_{K\in\mathcal{T}}\norm{-A^j-\Delta \widehat{u_0}^j}_{L^q(K)}\norm{G-\M_KG}_{L^{p^*}(K)}
		+\frac12\sum_{\sigma\in\mathcal{E}_K\setminus\mathcal{E}^D}\norm{\left\llbracket\nabla \widehat{u_0}^j\right\rrbracket}_{L^q(\sigma)}\norm{G-\M_K G}_{L^{p^*}(\sigma)}.
	\end{align*}
	Using Theorem \ref{pFaceInterpolation} and the Cauchy-Schwarz inequality we can estimate
	\begin{align*}
		\int_\Omega\nabla (w^j-\widehat{u_0}^j)\cdot\nabla G\,dx
		&\leq \sum_{K\in\mathcal{T}}\norm{-A^j-\Delta \widehat{u_0}^j}_{L^q(K)}C_{P,p^*}h_K\norm{\nabla G}_{L^{p^*}(K)}\\
		&+\frac12\sum_{\sigma\in\mathcal{E}_K\setminus\mathcal{E}^D}\norm{\left\llbracket\nabla \widehat{u_0}^j\right\rrbracket}_{L^q(\sigma)}C_{\text{app},p^*}h_K^{1-\frac1{p^*}}\norm{\nabla G}_{L^{p^*}(K)}\\
		&\leq2^{\frac1{p^*}}\left(\sum_{K\in\mathcal{T}}\left(C_{P,p^*}^qh_K^q\norm{A^j+\Delta \widehat{u_0}^j}_{L^q(K)}^q +\frac{C_{\text{app},p^*}^qN_\partial^{q-1}}{2^q} h_K\sum_{\sigma\in\mathcal{E}_K\setminus\mathcal{E}^D}\norm{\llbracket\nabla \widehat{u_0}^j\rrbracket}_{L^q(\partial K)}^q\right)\right)^{\frac1q}\\
		&\left(\sum_{K\in\mathcal{T}}\norm{\nabla G}_{L^{p^*}(K)}^{p^*}\right)^{\frac1{p^*}}\\
		&\leq \eta_{S,q}^j\norm{\nabla G}_{L^{p^*}(\Omega)}.
	\end{align*}
	Where $N_\partial=\max_{K\in\mathcal{T}}\abs{\mathcal{E}_K}$, see Definition \ref{defn:mesh}.
	The proof for $q=\infty$ is completely analogous.
\end{proof}
We now study the parabolic error $\rho$.
For this we need the heat semigroup on $\Omega$ with homogeneous Neumann and Dirichlet conditions on $\Gamma_N$ and $\Gamma_D$ respectively, denoted by $\left(e^{t\Delta}\right)_{t\geq0}$.
Furthermore, we note that in \cite[Chapter 4]{ouhabaz2009} it is proved that
\begin{align*}
	\norm{e^{t\Delta}f}_{L^\infty(\Omega)}\leq \norm{f}_{L^\infty(\Omega)}
\end{align*}
for every $f\in L^\infty(\Omega)$ and $t>0$.
%For more information see \cite[Chapter 4]{ouhabaz2009}.
Notice, that since $w$ and $v_0$ satisfy the boundary conditions exactly, we only need to consider homogeneous Dirichlet and Neumann boundary conditions.
Further, we define the temporal residual
\begin{align*}
	R(t,\cdot) = -\ell_{j-1}(t)A^{j-1}-\ell_{j}(t)A^j-\frac{\widehat{u_0}^j-\widehat{u_0}^{j-1}}{\tau}\quad t\in[t_{j-1},t_j]
\end{align*}
with $\ell_{j-1}(t) := \frac{t_j-t}{\tau_j}$ and $\ell_j(t):=\frac{t_{j-1}+t}{\tau_j}$.
\begin{lemma}\label{lemma:ParabolicBound}
	If Condition \ref{conditions} (C1)-(C3), (C6) and (C7) are satisfied, then the parabolic error $\rho := v_0-w$ satisfies for $t\in (t_{j-1},t_j)$ and $j=1,\dots,J$
	\begin{align*}
		\norm{\rho(t,\cdot)}_{L^\infty(\Omega)} \leq \norm{\rho(t_{j-1},\cdot)}_{L^\infty(\Omega)}+\eta_{T,\infty}^j+\tau_j\dot{\eta}_{S,q}^j C_{\text{Green},p^*}
	\end{align*}
	with $\frac1{p^*}+\frac1q=1$ and
	\begin{alignat*}{2}
		\eta_{T,\infty}^j &:= \int_{t_{j-1}}^{t_j}\norm{R(t,\cdot)}_{L^\infty(\Omega)}\;dt = \int_{t_{j-1}}^{t_j}\norm{-\ell_{j-1}A^{j-1}-\ell_jA^j-\frac{\widehat{u_0}^j-\widehat{u_0}^{j-1}}{\tau_j}}_{L^\infty(\Omega)} dt&&\\
		\dot{\eta}_{S,q}^j&:= \tau_j^{-1}2^{\frac1{p^*}}\left(\sum_{K\in\mathcal{T}}\left(C_{P,p^*}^qh_K^q\norm{A^j-A^{j-1}+\Delta (\widehat{u_0}^j-\widehat{u_0}^{j-1})}_{L^q(K)}^q\right.\right.\\
			&\quad\left.\left.+\frac{C_{\text{app},p^*}^qN_\partial^{q-1}}{2^q}h_K\sum_{\sigma\in\mathcal{E}_K\setminus\mathcal{E}^D}\norm{\left\llbracket\nabla (\widehat{u_0}^j-\widehat{u_0}^{j-1})\right\rrbracket}_{L^q(\sigma)}^q\right)\right)^{\frac1q}&& \text{for }q<\infty\\
		\dot{\eta}_{S,\infty}^j&:=\tau_j^{-1}\max_{K\in\mathcal{T}}\left(C_{P,1}h_K\norm{A^j-A^{j-1}+\Delta (\widehat{u_0}^j-\widehat{u_0}^{j-1})}_{L^\infty(K)}\right.\\
			&\quad\left.+\frac{C_{\text{app},1}}{2}\sum_{\sigma\in\mathcal{E}_K\setminus\mathcal{E}^D}\norm{\left\llbracket\nabla (\widehat{u_0}^j-\widehat{u_0}^{j-1})\right\rrbracket}_{L^\infty(\sigma)}\right) &&\text{for } q=\infty.
	\end{alignat*}
\end{lemma}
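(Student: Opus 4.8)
The plan is to represent the parabolic error through Duhamel's principle and then exploit the $L^\infty$-contractivity of the heat semigroup noted above. Writing $(e^{t\Delta})_{t\geq0}$ for the semigroup with homogeneous Dirichlet and Neumann data (admissible here because $w$ and $v_0$ match the boundary conditions exactly, so $\rho$ has homogeneous boundary values), equation \eqref{eqn:parabolic_error} gives on $[t_{j-1},t_j]$
\begin{align*}
	\rho(t) = e^{(t-t_{j-1})\Delta}\rho(t_{j-1}) + \int_{t_{j-1}}^t e^{(t-s)\Delta}\left(R(s)-\partial_t\varepsilon(s)\right)\,ds.
\end{align*}
Taking the $L^\infty(\Omega)$ norm and using $\norm{e^{t\Delta}g}_{L^\infty(\Omega)}\leq\norm{g}_{L^\infty(\Omega)}$ together with the triangle inequality yields
\begin{align*}
	\norm{\rho(t,\cdot)}_{L^\infty(\Omega)}\leq \norm{\rho(t_{j-1},\cdot)}_{L^\infty(\Omega)}+\int_{t_{j-1}}^{t}\norm{R(s,\cdot)}_{L^\infty(\Omega)}\,ds+\int_{t_{j-1}}^{t}\norm{\partial_t\varepsilon(s,\cdot)}_{L^\infty(\Omega)}\,ds.
\end{align*}
Bounding the upper limit by $t_j$, the middle term is exactly $\eta_{T,\infty}^j$, so the only remaining work is the $\partial_t\varepsilon$ integral.

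For that integral I would use that both the elliptic reconstruction $w$ and the Morley reconstruction $\widehat{u_0}$ are interpolated linearly in time on $[t_{j-1},t_j]$ (this is precisely the interpolation underlying the derivation of \eqref{eqn:parabolic_error}), so $\varepsilon=w-\widehat{u_0}$ is affine in $t$ and $\partial_t\varepsilon = (\varepsilon^j-\varepsilon^{j-1})/\tau_j$ is constant on the interval. Hence the last integral is at most $\tfrac{t-t_{j-1}}{\tau_j}\norm{\varepsilon^j-\varepsilon^{j-1}}_{L^\infty(\Omega)}\leq\norm{\varepsilon^j-\varepsilon^{j-1}}_{L^\infty(\Omega)}$, and it remains to bound this increment.

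The key observation is that $\varepsilon^j-\varepsilon^{j-1}=(w^j-w^{j-1})-(\widehat{u_0}^j-\widehat{u_0}^{j-1})$ is itself the elliptic error of a time-differenced problem. By linearity of \eqref{def_ell_recon}, the increment $w^j-w^{j-1}$ solves the elliptic reconstruction problem with right-hand side $A^j-A^{j-1}$; and the defining relations in Definition \ref{defn:Morley_solvent} show that $\widehat{u_0}^j-\widehat{u_0}^{j-1}$ inherits the mean-matching identity \eqref{eqn:same_mean} with $A^j$ replaced by $A^j-A^{j-1}$. Consequently Proposition \ref{prop:MorleyOrth} and the estimate of Lemma \ref{lemma:EllipticBound} carry over verbatim to this increment, with the element residual $A^j+\Delta\widehat{u_0}^j$ and the jumps $\jump{\nabla\widehat{u_0}^j}$ replaced by $A^j-A^{j-1}+\Delta(\widehat{u_0}^j-\widehat{u_0}^{j-1})$ and $\jump{\nabla(\widehat{u_0}^j-\widehat{u_0}^{j-1})}$. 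This gives $\norm{\varepsilon^j-\varepsilon^{j-1}}_{L^\infty(\Omega)}\leq \tau_j\dot{\eta}_{S,q}^jC_{\text{Green},p^*}$, which is precisely the missing term, and the $q=\infty$ branch is treated identically with the second branch of the estimator.

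I expect the main obstacle to be this reduction of $\partial_t\varepsilon$ to a time-differenced elliptic error, and in particular verifying that the quasi-orthogonality of Proposition \ref{prop:MorleyOrth} survives the differencing, namely that \eqref{eqn:same_mean} holds for $\widehat{u_0}^j-\widehat{u_0}^{j-1}$ so that no spurious element-mean term is produced when passing through Green's representation \eqref{eqn:Green}.
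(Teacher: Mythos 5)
Your proposal is correct and follows essentially the same route as the paper's proof: Duhamel's principle with the $L^\infty$-contractive heat semigroup, splitting off $\eta_{T,\infty}^j$, and then bounding $\partial_t\varepsilon$ (which, being piecewise affine in time, equals $\tau_j^{-1}(\varepsilon^j-\varepsilon^{j-1})$) by applying the Green's-function representation together with the quasi-orthogonality of Proposition \ref{prop:MorleyOrth} to the time-differenced data $A^j-A^{j-1}$ and $\widehat{u_0}^j-\widehat{u_0}^{j-1}$, exactly as the paper does by repeating the argument of Lemma \ref{lemma:EllipticBound}. Your verification that \eqref{eqn:same_mean} survives the differencing (by linearity, since it holds at each time level, including $j=0$ thanks to the choice of the discrete Laplacian in $A^0$) is precisely the point the paper leaves implicit, and your Duhamel formula even corrects the paper's typographical $e^{(t-t_j)\Delta}$ to the intended $e^{(t-t_{j-1})\Delta}$.
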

\begin{proof}
	One can easily verify that
	\begin{align*}
		\partial_t\rho-\Delta\rho = -\ell_{j-1}A^{j-1}-\ell_{j}A^j-\frac{\widehat{u_0}^j-\widehat{u_0}^{j-1}}{\tau_j}-\partial_t\varepsilon = R(t)-\partial_t\varepsilon
	\end{align*}
	for all $t\in (t_{j-1},t_j)$.
	Hence, using Duhamel's principle and the continuous semigroup $e^{t\Delta}$, we can express $\rho$ as
	\begin{align*}
		\rho(t) = e^{(t-t_j)\Delta}\rho(t_{j-1})+\int_{t_{j-1}}^te^{(t-s)\Delta}(R(s)-\partial_t\varepsilon(s))\;ds.
	\end{align*}
	Using this expression we can bound
	\begin{align*}
		\norm{\rho(t,\cdot)}_{L^\infty(\Omega)}
		&\leq \norm{\rho(t_{j-1},\cdot)}_{L^\infty(\Omega)}+\int_{t_{j-1}}^t\norm{R(s,\cdot)}_{L^\infty(\Omega)}+\norm{\partial_t\varepsilon(s,\cdot)}_{L^\infty(\Omega)}\;ds\\
		&\leq \norm{\rho(t_{j-1},\cdot)}_{L^\infty(\Omega)}+\eta_T^j+\tau_j\norm{\partial_t\varepsilon}_{L^\infty([t_{j-1},t_j]\times\Omega)}.
	\end{align*}
	It only remains to show $\norm{\partial_t\varepsilon}_{L^\infty([t_{j-1},t_j]\times\Omega)}\leq\dot{\eta}_{S,q}^j$.
	To do this we write
	\begin{align*}
		\partial_t\varepsilon = \tau_j^{-1}(w^{j-1}-w^j-\widehat{u_0}^{j-1}+\widehat{u_0}^j)
	\end{align*}
	and use the same arguments as in Lemma \ref{lemma:EllipticBound} to arrive at
	\begin{multline*}
		\langle\nabla\partial_t\varepsilon,\nabla G \rangle
		\leq \tau_j^{-1}2^{\frac1{p^*}}\left(\sum_{K\in\mathcal{T}}\left(C_{P,p^*}^qh_K^q\norm{A^j-A^{j-1}+\Delta (\widehat{u_0}^j-\widehat{u_0}^{j-1})}_{L^q(K)}^q\right.\right.\\
		\left.\left.+\frac{C_{\text{app},p}^qN_\partial^{q-1}}{2^q}h_K\sum_{\sigma\in\mathcal{E}_K\setminus\mathcal{E}^D}\norm{\left\llbracket\nabla (\widehat{u_0}^j-\widehat{u_0}^{j-1})\right\rrbracket}_{L^q(\sigma)}^q\right)\right)^{\frac1q}\norm{\nabla G}_{L^{p^*}(\Omega)}
	\end{multline*}
	Combining the two estimates finishes the proof.
\end{proof}
\begin{thm}\label{thm:max_heat_bound}
	If Condition \ref{conditions} (C1)-(C3),(C6) and (C7) are satisfied for some exponent $1\leq p^*<\frac{d}{d-1}$, then the $L^\infty(0,T;L^\infty(\Omega))$ norm of $e:=\widehat{u_0}-v_0$ satisfies
	\begin{align*}
		\max_{t\in[0,T]}\norm{e(t,\cdot)}_{L^\infty(\Omega)} \leq \norm{e(0,\cdot)}_{L^\infty(\Omega)}+\sum_{j=1}^J\eta_{T,\infty}^j+\sum_{j=1}^J\tau_j\dot{\eta}_{S,q}^jC_{\text{Green},p^*}+\max_{0\leq j\leq J} \eta_{S,q}^jC_{\text{Green},p^*}.
	\end{align*}
	With $\frac1{p^*}+\frac1q=1$ and the estimators $\eta_{T,\infty}^j,\dot{\eta}_{S,q}^j$ defined in Lemma \ref{lemma:ParabolicBound} and $\eta_{S,q}^j$ defined in Lemma \ref{lemma:EllipticBound}.
\end{thm}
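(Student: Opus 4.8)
The plan is to assemble the theorem from the two estimates already in hand: the pointwise elliptic bound of Lemma \ref{lemma:EllipticBound} and the recursive parabolic bound of Lemma \ref{lemma:ParabolicBound}. Recall the splitting $e = \widehat{u_0}-v_0 = -\varepsilon-\rho$ with $\varepsilon = w-\widehat{u_0}$ and $\rho = v_0-w$. Hence for every $t$ the triangle inequality gives $\norm{e(t,\cdot)}_{L^\infty(\Omega)} \le \norm{\varepsilon(t,\cdot)}_{L^\infty(\Omega)} + \norm{\rho(t,\cdot)}_{L^\infty(\Omega)}$, so it suffices to control each term uniformly in $t$ and then take the maximum over $[0,T]$. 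The heavy analytic work, namely the Green's-function representation \eqref{eqn:Green}, the quasi-orthogonality of the Morley reconstruction (Proposition \ref{prop:MorleyOrth}), and the $L^\infty$-contractivity of the heat semigroup, is already encapsulated in the two lemmas, so no new hard estimate is required here.

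First I would bound the elliptic contribution. Both $w$ and $\widehat{u_0}$ are piecewise linear in time on each interval $[t_{j-1},t_j]$, so for fixed $x$ the value $\varepsilon(t,x)$ is the convex combination $\ell_{j-1}(t)\,\varepsilon^{j-1}(x)+\ell_j(t)\,\varepsilon^j(x)$ of the two nodal elliptic errors. Consequently $\norm{\varepsilon(t,\cdot)}_{L^\infty(\Omega)}\le\max\{\norm{\varepsilon^{j-1}}_{L^\infty(\Omega)},\norm{\varepsilon^j}_{L^\infty(\Omega)}\}$ on that interval, and taking the maximum over all intervals together with Lemma \ref{lemma:EllipticBound} yields $\max_{t}\norm{\varepsilon(t,\cdot)}_{L^\infty(\Omega)} \le \max_{0\le j\le J}\eta_{S,q}^j\,C_{\text{Green},p^*}$, which is the last term in the asserted bound.

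Next I would treat the parabolic part by telescoping the recursion of Lemma \ref{lemma:ParabolicBound}. For $t\in(t_{j-1},t_j)$ that lemma gives $\norm{\rho(t,\cdot)}_{L^\infty(\Omega)} \le \norm{\rho(t_{j-1},\cdot)}_{L^\infty(\Omega)} + \eta_{T,\infty}^j + \tau_j\dot{\eta}_{S,q}^j\,C_{\text{Green},p^*}$; applying this successively over $j=1,\dots,J$ and accumulating the contributions produces $\max_t\norm{\rho(t,\cdot)}_{L^\infty(\Omega)} \le \norm{\rho(0,\cdot)}_{L^\infty(\Omega)} + \sum_{j=1}^J \eta_{T,\infty}^j + \sum_{j=1}^J \tau_j\dot{\eta}_{S,q}^j\,C_{\text{Green},p^*}$. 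It then remains to express the starting value $\rho(0,\cdot) = v_0^0-w^0$ through the initial error: writing $\rho(0) = (v_0^0-\widehat{u_0}^0)-(w^0-\widehat{u_0}^0) = -e(0)-\varepsilon^0$ gives $\norm{\rho(0,\cdot)}_{L^\infty(\Omega)} \le \norm{e(0,\cdot)}_{L^\infty(\Omega)} + \norm{\varepsilon^0}_{L^\infty(\Omega)}$, where the last summand is bounded by $\eta_{S,q}^0 C_{\text{Green},p^*}$ and therefore subsumed into the elliptic maximum term. Adding the elliptic and parabolic contributions then gives the claimed estimate.

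I expect the only genuinely delicate points to be the two pieces of bookkeeping rather than any new inequality. First, one must verify the time-interpolation claim for $\varepsilon$, i.e.\ that the linear-in-time reconstruction introduces no interior-in-time maximum exceeding the nodal values $\norm{\varepsilon^{j-1}}_{L^\infty(\Omega)}$ and $\norm{\varepsilon^j}_{L^\infty(\Omega)}$; this is where the convex-combination structure of $\widehat{u_0}$ and $w$ is essential. Second, one must rewrite $\rho(0)$ consistently with the definitions of $\widehat{u_0}^0$ and $w^0$ so that the initial term collapses to $\norm{e(0,\cdot)}_{L^\infty(\Omega)}$ up to the $j=0$ elliptic estimator, which is exactly the term already present in the maximum on the right-hand side.
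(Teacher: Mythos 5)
Your proposal is correct and takes essentially the same route as the paper: the same splitting of $e$ into $\varepsilon$ and $\rho$, the nodal reduction of $\max_{t}\norm{\varepsilon(t,\cdot)}_{L^\infty(\Omega)}$ via the linear-in-time interpolation, the telescoped (inductive) application of Lemma \ref{lemma:ParabolicBound}, and the bound $\norm{\rho(0,\cdot)}_{L^\infty(\Omega)}\leq\norm{e(0,\cdot)}_{L^\infty(\Omega)}+\eta_{S,q}^0C_{\text{Green},p^*}$. Your closing step of ``subsuming'' $\eta_{S,q}^0C_{\text{Green},p^*}$ into $\max_{0\leq j\leq J}\eta_{S,q}^jC_{\text{Green},p^*}$ mirrors the paper's own final line and shares its minor imprecision (strictly, this term adds to the maximum once more rather than being absorbed by it), so it is no gap relative to the paper's argument.
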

\begin{proof}
	With Lemma \ref{lemma:ParabolicBound} and Lemma \ref{lemma:EllipticBound} it follows by induction over the time steps $j=0,\dots,J$ that
	\begin{align*}
		\max_{t\in[0,T]}\norm{e(t,\cdot)}_{L^\infty(\Omega)} 
		&\leq \max_{t\in[0,T]}\left(\norm{\rho(t,\cdot)}_{L^\infty(\Omega)}+\norm{\varepsilon(t,\cdot)}_{L^\infty(\Omega)}\right)\\
		&\leq \max_{t\in(t_{J-1},t_J]}\norm{\rho(t,\cdot)}_{L^\infty(\Omega)}+\max_{t\in[0,t_{J-1}]}\norm{\rho(t,\cdot)}_{L^\infty(\Omega)}+\max_{0\leq j\leq J}\norm{\varepsilon(t_j,\cdot)}_{L^\infty(\Omega)}\\
		&\leq \norm{\rho(t_{J-1},\cdot)}_{L^\infty(\Omega)}+\eta_{T,\infty}^J+\tau_J\dot{\eta}_S^JC_{\text{Green},p^*}+\max_{t\in[0,t_{j-1}]}\norm{\rho(t,\cdot)}_{L^\infty(\Omega)}+\max_{0\leq j\leq J} \norm{\varepsilon_j}_{L^\infty(\Omega)}\\
		&\leq \norm{e(0,\cdot)}_{L^\infty(\Omega)}+\sum_{j=1}^J\eta_{T,\infty}^j+\sum_{j=1}^J\tau_j\dot{\eta}_S^jC_{\text{Green},p^*}+\max_{0\leq j\leq J} \eta_{S,q}^jC_{\text{Green},p^*}
	\end{align*}
	where we also used
	\begin{align*}
		\norm{\rho(0,\cdot)}_{L^\infty(\Omega)}
		\leq \norm{e(0,\cdot)}_{L^\infty(\Omega)}+\norm{\varepsilon(0,\cdot)}_{L^\infty(\Omega)}
		\leq \norm{e(0,\cdot)}_{L^\infty(\Omega)}+\eta_{S,q}^0C_{\text{Green},p^*}.
	\end{align*}
\end{proof}
\subsection{Gradient error bounds}
We now derive an upper bound for $\norm{\nabla (\widehat{u_0}-v_0)}_{L^2([0,T]\times\Omega)}$.
For this we use a classical stability framework for the diffusion equation.
The proof is similar to that in \cite[Proposition 4.5]{bartels2016}.
The bound on $e$ then only relies on the $L^2([0,T];H^1_D(\Omega)')$ norm of the residual $R_0:=-\sum_{i=1}^nR_i$.
\begin{thm}\label{thm:H1_heat_bound}
	The $L^2$-norm in space and time of $\nabla e=\nabla(\widehat{u_0}-v_0)$ is bounded as follows
	\begin{multline*}
		\sup_{t\in[0,T]}\norm{e(t,\cdot)}_\Omega^2+\int_0^T \norm{\nabla e(t,\cdot)}_{L^2(\Omega)}^2 dt \\
		\leq 2\norm{\widehat{u_0}(0,\cdot)-v_0(0,\cdot)}_{L^2(\Omega)}^2+\sum_{j=0}^J\tau_j\left(\eta_{S,2}^j+C_{F,2,\Gamma_D}\norm{\partial_t\widehat{u_0}^j-\frac{u_{0,h}^j-u_{0,h}^{j-1}}{\tau_j}}_\Omega+\norm{\nabla(\widehat{u_0}^j-\widehat{u_0}^{j-1})}_{\Omega}\right)^2
	\end{multline*}
	with $\eta_{S,2}^j$ defined in Lemma \ref{lemma:EllipticBound}.
\end{thm}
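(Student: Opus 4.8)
The plan is to run the classical parabolic energy argument of \cite[Proposition 4.5]{bartels2016}, treating the residual $R_0$ of the conforming reconstruction as data, and then to estimate the dual norm of $R_0$ on each time slab by inserting the elliptic reconstruction $w^j$ and invoking the quasi-orthogonality of Proposition \ref{prop:MorleyOrth}. First I would fix the weak formulations. Because $\widehat{u_0}(t,\cdot)$ and $v_0(t,\cdot)$ both attain the Dirichlet datum $u^D$ on $\Gamma_D$ (Condition \ref{conditions} (C3) makes the reconstruction exact there), the error $e(t,\cdot)=\widehat{u_0}(t,\cdot)-v_0(t,\cdot)$ lies in $H^1_D(\Omega)$ and is an admissible test function, with $\partial_t e\in L^2(0,T;H^1_D(\Omega)')$. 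Subtracting the weak form $\langle\partial_t v_0,\phi\rangle+\int_\Omega\nabla v_0\cdot\nabla\phi\,dx=0$ of \eqref{eqn:heat} from the analogous expression for $\widehat{u_0}$ defines the residual via
\[
	\langle R_0(t,\cdot),\phi\rangle:=\langle\partial_t\widehat{u_0}(t,\cdot),\phi\rangle+\int_\Omega\nabla\widehat{u_0}(t,\cdot)\cdot\nabla\phi\,dx=\langle\partial_t e,\phi\rangle+\int_\Omega\nabla e\cdot\nabla\phi\,dx
\]
for all $\phi\in H^1_D(\Omega)$.

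Next I would carry out the energy estimate. Choosing $\phi=e$ gives $\tfrac12\tfrac{d}{dt}\norm{e}_\Omega^2+\norm{\nabla e}_\Omega^2=\langle R_0,e\rangle\le\norm{R_0(t,\cdot)}_{H^1_D(\Omega)'}\norm{\nabla e}_\Omega$, and Young's inequality absorbs the gradient, so that $\tfrac{d}{dt}\norm{e}_\Omega^2+\norm{\nabla e}_\Omega^2\le\norm{R_0(t,\cdot)}_{H^1_D(\Omega)'}^2$. Integrating in time and controlling the pointwise-in-time $L^2$ norm and the space-time gradient seminorm separately yields the factor $2$ in front of $\norm{\widehat{u_0}(0,\cdot)-v_0(0,\cdot)}_\Omega^2$, together with the time integral of the squared dual residual norm, which after the slab-wise bound below collapses to $\sum_{j}\tau_j(\cdots)^2$.

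The heart of the proof is bounding $\norm{R_0(t,\cdot)}_{H^1_D(\Omega)'}$ on each slab $[t_{j-1},t_j]$. Writing the time interpolant as $\widehat{u_0}(t,\cdot)=\widehat{u_0}^j-\ell_{j-1}(t)(\widehat{u_0}^j-\widehat{u_0}^{j-1})$, recalling $\partial_t\widehat{u_0}=\tau_j^{-1}(\widehat{u_0}^j-\widehat{u_0}^{j-1})$ and $-A^j=\tau_j^{-1}(u_{0,h}^j-u_{0,h}^{j-1})$, and inserting $w^j$ through its defining identity $\int_\Omega\nabla w^j\cdot\nabla\phi\,dx=\int_\Omega A^j\phi\,dx$ together with $\varepsilon^j=w^j-\widehat{u_0}^j$, I obtain the decomposition
\[
	\langle R_0(t,\cdot),\phi\rangle=\int_\Omega\left(\partial_t\widehat{u_0}^j-\frac{u_{0,h}^j-u_{0,h}^{j-1}}{\tau_j}\right)\phi\,dx-\int_\Omega\nabla\varepsilon^j\cdot\nabla\phi\,dx-\ell_{j-1}(t)\int_\Omega\nabla(\widehat{u_0}^j-\widehat{u_0}^{j-1})\cdot\nabla\phi\,dx.
\]
The first (consistency) term is bounded by Cauchy--Schwarz and the Friedrichs inequality (Theorem \ref{thm:friedirchsineq}), producing $C_{F,2,\Gamma_D}\norm{\partial_t\widehat{u_0}^j-\tau_j^{-1}(u_{0,h}^j-u_{0,h}^{j-1})}_\Omega\norm{\nabla\phi}_\Omega$. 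The middle (elliptic residual) term is estimated by Proposition \ref{prop:MorleyOrth} with $f=\phi$, followed by the Poincar\'e and edge-approximation bounds (Theorem \ref{pFaceInterpolation}), exactly as in Lemma \ref{lemma:EllipticBound} with $p^*=q=2$, giving $\eta_{S,2}^j\norm{\nabla\phi}_\Omega$. The last (time-interpolation) term is controlled using $0\le\ell_{j-1}(t)\le1$ by $\norm{\nabla(\widehat{u_0}^j-\widehat{u_0}^{j-1})}_\Omega\norm{\nabla\phi}_\Omega$. Dividing by $\norm{\nabla\phi}_\Omega$ bounds $\norm{R_0(t,\cdot)}_{H^1_D(\Omega)'}$ by the sum in the parentheses of the theorem, which is constant on the slab; hence $\int_{t_{j-1}}^{t_j}\norm{R_0}_{H^1_D(\Omega)'}^2\,dt\le\tau_j(\cdots)^2$, and summing over $j$ finishes the estimate.

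The energy estimate itself is routine; the main obstacle is the residual decomposition, specifically recognizing that the conforming spatial part $\nabla\widehat{u_0}^j$ must be measured against the elliptic reconstruction $w^j$ (whose Neumann fluxes were matched to the numerical diffusive flux in Definition \ref{defn:Morley_solvent}) rather than directly, so that the nonconforming consistency error enters only through the cleanly estimable quantity $\eta_{S,2}^j$ afforded by the quasi-orthogonality. One must then carefully separate the remaining two contributions, namely the time-interpolation error $\nabla(\widehat{u_0}^j-\widehat{u_0}^{j-1})$ and the discrepancy between the nodal time difference $\partial_t\widehat{u_0}^j$ and the cell-average time difference $\tau_j^{-1}(u_{0,h}^j-u_{0,h}^{j-1})$, which account for the other two terms in the bound.
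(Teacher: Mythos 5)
Your proposal is correct and follows essentially the same route as the paper: the stability estimate of \cite[Proposition 4.5]{bartels2016} for the error $e$ in terms of $\norm{R_0}_*$, followed by the identical slab-wise residual decomposition — inserting the elliptic reconstruction $w^j$ via its defining identity to produce the three terms $\left(\nabla(\widehat{u_0}^j-w^j),\nabla f\right)$ (bounded by $\eta_{S,2}^j$ through Proposition \ref{prop:MorleyOrth} as in Lemma \ref{lemma:EllipticBound} with $p^*=q=2$), the time-derivative mismatch (bounded via Theorem \ref{thm:friedirchsineq}), and the time-interpolation term $-\ell_{j-1}(t)\left(\nabla(\widehat{u_0}^j-\widehat{u_0}^{j-1}),\nabla f\right)$, which is exactly the paper's splitting $I+II+III$. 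The only cosmetic difference is that you sketch the energy argument behind the Bartels estimate rather than merely citing it, which changes nothing of substance.
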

\begin{proof}
	We use the stability estimate from \cite[Proposition 4.5]{bartels2016}.
	Namely, the error $e:= v_0-\widehat{u_0}$ satisfies
	\begin{align*}
		\sup_{t\in[0,T]}\norm{e(t)}_{\Omega}^2+\norm{\nabla e}_{[0,T]\times\Omega}\leq2\norm{e(0)}_{\Omega}^2+\norm{R_0}_*^2.
	\end{align*}
	We now only need to bound $\norm{R_0}_*^2$.
	Let $f\in H^1_D(\Omega)$. For every $t\in(t_{j-1},t_j)$ we have
	\begin{align*}
		\langle R_0(t), f\rangle
		&= \left( \partial_t \widehat{u_0}(t),f\right) +\left( \nabla \widehat{u_0}(t),\nabla f\right)\\
		&= \left( \partial_t \widehat{u_0}-\frac{u_{0,h}^j-u_{0,h}^{j-1}}{\tau_j},f\right)+\frac1{\tau_j}\left( u_{0,h}^j-u_{0,h}^{j-1},f\right) +\left( \ell_j\nabla \widehat{u_0}^j+\ell_{j-1}\widehat{u_0}^{j-1},\nabla f\right)\\
		&=\left( \nabla \left(\widehat{u_0}^j-w^j\right),\nabla f\right)+ \left( \partial_t \widehat{u_0}-\frac{u_{0,h}^j-u_{0,h}^{j-1}}{\tau_j},f\right)+\left( (\ell_j-1)\nabla \widehat{u_0}^j+\ell_{j-1}\nabla \widehat{u_0}^{j-1},\nabla f\right)\\
		&=: I+II+III.
	\end{align*}
	We bound $I$ using Proposition \ref{prop:MorleyOrth}.
	We perform the same steps as in the proof of Lemma \ref{lemma:EllipticBound} to arrive at
	\begin{equation}
		\begin{aligned}\label{eqn:R}
			\left( \nabla \left(\widehat{u_0}^j-w^j\right),\nabla f\right)
			&= \sum_{K\in \mathcal{T}}\left(\int_K (-A^j-\Delta \widehat{u_0}^j)(f-\M_K f) dx\right.
			\left.-\frac12\sum_{\sigma\in\mathcal{E}_K\setminus\mathcal{E}^D}\int_{\sigma} \jump{\nabla \widehat{u_0}^j}(f-\M_K f) dS\right)\\
			&\leq\eta_{S,2}^j\norm{\nabla f}_{\Omega}.
		\end{aligned}
	\end{equation}
	For $II$ we use Hölder's inequality and the Poincaré-Friedrichs inequality (Theorem \ref{thm:friedirchsineq}) to derive
	\begin{align}\label{eqn:R_e}
		\left( \partial_t \widehat{u_0}-\frac{u_{0,h}^j-u_{0,h}^{j-1}}{\tau_j},f\right)
		\leq \norm{\partial_t \widehat{u_0}-\frac{u_{0,h}^j-u_{0,h}^{j-1}}{\tau_j}}_{L^2(\Omega)}\norm{f}_{\Omega}
		\leq C_{F,2,\Gamma_D}\norm{\partial_t \widehat{u_0}-\frac{u_{0,h}^j-u_{0,h}^{j-1}}{\tau_j}}_{\Omega}\norm{\nabla f}_{\Omega}.
	\end{align}
	Notice, that \eqref{eqn:R_e} is not optimal in the sense that we estimate $\norm{\partial_t (\widehat{u_0}-u_{0,h})}_{H^{1}_D(\Omega)'}\lesssim\norm{\partial_t \widehat{u_0}-\frac{u_{0,h}^j-u_{0,h}^{j-1}}{\tau_j}}_\Omega$.
	Since the $L^2$ norm of $\partial_t \widehat{u_0}-\frac{u_{0,h}^j-u_{0,h}^{j-1}}{\tau_j}$ converges linearly as does the rest of the estimator we may use the $L^2$ norm, which is easier to compute than the $H^1_D(\Omega)'$ norm, without changing the convergence behavior of the overall estimator.\\
	For $III$ we use Hölder's inequality
	\begin{equation}
		\begin{aligned}\label{eqn:R_T}
			\left( (\ell_j-1)\nabla \widehat{u_0}^j+\ell_{j-1}\nabla \widehat{u_0}^{j-1},\nabla f\right)
			&\leq \norm{(\ell_j-1)\nabla \widehat{u_0}^j+\ell_{j-1}\nabla \widehat{u_0}^{j-1}}_{\Omega}\norm{\nabla f}_{\Omega}\\
			&\leq \norm{\nabla \widehat{u_0}^j-\nabla \widehat{u_0}^{j-1}}_{\Omega}\norm{\nabla f}_{\Omega}.
		\end{aligned}
	\end{equation}
	Combining estimates \eqref{eqn:R}-\eqref{eqn:R_T} yields the desired result.
\end{proof}
\section{Stability frameworks}\label{section:abstract_error}
We first prove a stability framework for the reduced model \eqref{eqn:reduced} with initial conditions \eqref{eqn:initial} and boundary conditions \eqref{boundaryCond}.
This means we bound the difference $u_i-v_i$ ($i=1,\dots,n$), with $(v_i)_{i=0}^n$ a weak solution in the sense of Theorem \ref{thm:existence} and $(u_i)_{i=0}^n$ a weak solution of the perturbed system \eqref{eqn:fullperturbed}.
This upper bound relies on bounds for the $L^\infty([0,T]\times\Omega)$ norm of $u_0-v_0$ and the $L^2([0,T]\times\Omega)$ norm of $\nabla(u_0-v_0)$, that were derived in Section \ref{section:heat}.\\
After that we present a stability framework for the general model \eqref{eqn:general} with the same boundary conditions \eqref{boundaryCond}.
The upper bound for $u_i-v_i$ ($i=1,\dots,n$) follows in the same way as for the reduced model.
We also need upper bounds for $u_0-v_0$ and $\Phi-\Psi$, since we cannot use the bound from Section \ref{section:heat}.\\
For the stability frameworks, we need the Assumption \ref{ass:perturbed} on the weak solution $(u_0,\dots,u_n,\Phi)$ of the perturbed system \eqref{eqn:fullperturbed}.
Recall $X(q) := L^{\frac{2q}{q-d}}(0,T;L^q(\Omega))$.
%Assumption \ref{ass:perturbed} is automatically satisfied by the numerical scheme and reconstruction presented in Section \ref{section:scheme}.
In Section \ref{section:residual}, when we derive the a posteriori error estimate the solution to the pertubed system is the reconstruction of the numerical solution.
%Since Assumption \ref{ass:perturbed} is automatically satisfied by the reconstruction of the numerical solution from Section \ref{section:scheme}, we do not need Assumption \ref{ass:perturbed} for the a posteriori error estimate.
Hence, we do not need Assumption \ref{ass:perturbed} for the a posteriori error estimate.
\begin{ass}\label{ass:perturbed}
	\begin{enumerate}[(i)]
		\item Assume that the Dirichlet boundary conditions are exactly satisfied by the solution of the perturbed system, i.e.
			\begin{align*}
				u_i = v_i^D\quad \text{on }\Gamma_D.
			\end{align*}
		\item We assume that the solvent concentration fulfills $\nabla u_0\in X(q)$ for some $q>d$.
			If $z\neq 0$, we assume additionally that $F:=\nabla u_0-u_0z\beta\nabla\Phi\in X(q)$ and $\nabla u_i\in L^\infty(0,T;L^{\tilde{q}}(\Omega))$ for some $q,\tilde{q}>2$ for $d=2$ and $q,\tilde{q}\geq 3$ for $d=3$.
	\end{enumerate}
\end{ass}
\begin{remark}
	\begin{enumerate}[(a)]
		\item Assumption \ref{ass:perturbed} (i) states, that the perturbed system satisfies the same Dirichlet boundary conditions as the original system.
			If we did not impose Assumption (i), we would have to bound
			\begin{align*}
				\langle u_0\partial_n u_i-u_i\partial_n u_0-v_0\partial_n v_i+v_i\partial_n v_0,u_i-v_i\rangle_{H^{-\frac12}(\Gamma_D),H^{\frac12}(\Gamma_D)}
			\end{align*}
			in the proof of Theorem \ref{thm:abstract_error} and Theorem \ref{thm:abstract_general}.
			It is unclear how this can be done.
			If the boundary conditions satisfy (C3), then the reconstruction of the numerical solution from Section \ref{section:scheme} satisfies Assumption \ref{ass:perturbed} (i) automatically.
%			In most applications this assumption is not a severe restriction, since the Dirichlet boundary conditions are constant on connected subsets, that model connections to comparably large reservoirs with constant concentration.
		\item We do not assume that the Neumann boundary condition is exactly satisfied.
			The perturbed Neumann boundary conditions are incorporated in the right hand side $R_i\in H^1_D(\Omega)$. % and shows up in the residual bound in Section \ref{Section:Residual_bound}.
			Hence, we do not see the boundary error explicitly in this setting.
		\item We need the additional regularity assumptions in (ii) for the full model, since we cannot utilize the classical maximal regularity results for the diffusion equation.
			Since the reconstruction is composed of continuous piecewise polynomial function, Assumption \ref{ass:perturbed} (ii) is automatically satisfied.
	\end{enumerate}
\end{remark}
\subsection{Stability framework for the reduced system}\label{subsec:stability_reduced}
We now establish a stability framework for the simplified ion transport model.
Let, in this subsection, $(v_i)_{i=0,\dots,n}$ be a weak solution of \eqref{eqn:reduced} and $(u_i)_{i=0,\dots,n}$ be a weak solution of the perturbed system
\begin{align}\label{eqn:simpert}
	&\begin{aligned}
	\partial_t u_i - \operatorname{div}(u_0\nabla u_i-u_0\nabla u_i) &= R_i\quad i=1,\dots,n\\
	\sum_{i=0}^n u_i &= 1
	\end{aligned}\quad \text{in }\Omega\times(0,T)
\end{align}
with $R_i\in L^2(0,T;H^1_D(\Omega)')$ and $(u_0,\dots,u_n)$ satisfying Assumption \ref{ass:perturbed}.
%In Section \ref{section:residual}, the solution to the perturbed system will be a suitable reconstruction of a numerical solution.
%In that case, $u_0$ satisfies Assumption \ref{ass:perturbed} if the Dirichlet boundary conditions are piecewise linear, since it is a continuous piecewise polynomial function (see Section \ref{section:scheme}).
For this stability framework, it does not matter how the solution to the perturbed system is obtained, as long as it has the required regularity and satisfies Assumption \ref{ass:perturbed}.
This stability framework can be used with other numerical methods or different reconstructions, since we only impose minimal conditions on the residuals $R_i$ and the regularity conditions on $u_0,\dots,u_n$.\\
%We write $\langle\cdot,\cdot\rangle$ for the duality pairing of $H^1_D(\Omega)'$ and $H^1_D(\Omega)$.
To obtain the bound, we subtract the weak formulations for \eqref{eqn:reduced} and \eqref{eqn:simpert} and test with $u_i-v_i$.
In the end, we use the Gronwall lemma.
Note, that according to \cite[Section 5.9, Theorem 3.9]{evans2010} $v_i\in C([0,T],L^2(\Omega))$ for all $i=0,\dots,n$ and $T>0$.
The positivity of the heat semigroup $(e^{t\Delta})_{t\geq0}$ implies that $\gamma<v_0(t,x)$ for all $(t,x)\in[0,T]\times\Omega$ (see \cite[Section 4.1]{ouhabaz2009}).
\begin{thm}\label{thm:abstract_error}
	Let the data satisfy Condition \ref{conditions} (C1) and (C2).
	Let $(v_0,\dots,v_n)$ be a weak solution of \eqref{equations}-\eqref{eqn:initial} and $(u_0,\dots,u_n)$ be a weak solution of \eqref{eqn:simpert} with the boundary and initial conditions \eqref{boundaryCond}-\eqref{eqn:initial}.
	Under Assumption \ref{ass:perturbed}, the difference $u_i-v_i$ for $i=1,\dots,n$ is bounded as follows
	\begin{multline*}
		\max_{t\in[0,T]} \norm{u_i-v_i}_{\Omega}^2+\int_0^{T}\norm{\sqrt{v_0}\nabla(u_i-v_i)}_{\Omega}^2\,dt
		\leq\\
		\left(2\norm{u_i^0-v_i^0}^2_{\Omega}
		+\frac{12}\gamma\norm{v_0-u_0}_{L^\infty([0,T]\times\Omega)}^2\norm{\nabla u_i}_{[0,T]\times\Omega}^2
		+\frac{12}\gamma\norm{\nabla(u_0-v_0)}_{[0,T]\times\Omega}^2
		+\frac{12}\gamma \norm{R_i}_{*}^2 \right)\\
		\exp\left(2C_G^{\frac{2}{1-\theta}}(1+C_{F,2,\Gamma_D})^{\frac{\theta}{1-\theta}}\frac{\mu}{\gamma^{\frac{1+\theta}{1-\theta}}}\norm{\nabla u_0}_{X(q)}^{\frac{2}{1-\theta}}\right).
	\end{multline*}
	with $\theta = \frac d2-\frac dp$ and $\mu= \frac{1-\theta}{2}\left(\frac{1}{2(1+\theta)}\right)^\frac{\theta+1}{\theta-1}$.
\end{thm}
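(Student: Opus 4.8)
The plan is to follow the strategy announced before the statement: subtract the weak formulations of \eqref{eqn:reduced} and \eqref{eqn:simpert}, test with the difference $e_i := u_i - v_i$, isolate a coercive diffusion term, and close with Gronwall's lemma. First I would observe that, by Assumption \ref{ass:perturbed} (i), both $u_i$ and $v_i$ carry the same Dirichlet datum $v_i^D$ on $\Gamma_D$, so $e_i\in L^2(0,T;H^1_D(\Omega))$ is an admissible test function; combined with $\partial_t e_i\in L^2(0,T;H^1_D(\Omega)')$ this makes $t\mapsto\norm{e_i(t)}_\Omega^2$ absolutely continuous with $\langle\partial_t e_i,e_i\rangle=\tfrac12\tfrac{d}{dt}\norm{e_i}_\Omega^2$. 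Testing the difference of the two equations with $e_i$ and integrating from $0$ to an arbitrary $t^\ast\in[0,T]$ then produces $\tfrac12\norm{e_i(t^\ast)}_\Omega^2-\tfrac12\norm{u_i^0-v_i^0}_\Omega^2$ from the time derivative and $\int_0^{t^\ast}\langle R_i,e_i\rangle\,dt$ from the residual.

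The decisive algebraic step is to rewrite the flux difference so that the coercive term appears explicitly. I would use
\[
(u_0\nabla u_i-u_i\nabla u_0)-(v_0\nabla v_i-v_i\nabla v_0)=v_0\nabla e_i+e_0\nabla u_i-v_i\nabla e_0-e_i\nabla u_0,
\]
with $e_0:=u_0-v_0$. Pairing with $\nabla e_i$ and integrating yields the good term $\int_\Omega v_0\abs{\nabla e_i}^2\,dx=\norm{\sqrt{v_0}\nabla e_i}_\Omega^2$ together with three error contributions. The terms $\int_\Omega e_0\nabla u_i\cdot\nabla e_i$ and $\int_\Omega v_i\nabla e_0\cdot\nabla e_i$ are routine: via Cauchy--Schwarz, the bounds $v_i\le1$ and $v_0>\gamma$ (the latter holding throughout $[0,T]\times\Omega$ by positivity of the heat semigroup, as recalled before the statement), the conversion $\norm{\nabla e_i}_\Omega\le\gamma^{-1/2}\norm{\sqrt{v_0}\nabla e_i}_\Omega$, and Young's inequality, each splits into a small multiple of $\norm{\sqrt{v_0}\nabla e_i}_\Omega^2$ to be absorbed on the left and a forcing contribution $\tfrac1\gamma\norm{e_0}_{L^\infty}^2\norm{\nabla u_i}_\Omega^2$, respectively $\tfrac1\gamma\norm{\nabla e_0}_\Omega^2$. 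The residual is treated identically, yielding $\tfrac1\gamma\norm{R_i}_\ast^2$; after absorption and time integration these reproduce the $\tfrac{12}{\gamma}(\cdots)$ block in the statement.

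The delicate term is $\int_\Omega e_i\nabla u_0\cdot\nabla e_i\,dx$, which I expect to be the main obstacle, since it carries the unknown $e_i$ itself against the gradient data. Here I would apply Hölder with exponents $(p,q,2)$ satisfying $\tfrac1p=\tfrac12-\tfrac1q$, giving $\norm{e_i}_{L^p}\norm{\nabla u_0}_{L^q}\norm{\nabla e_i}_\Omega$, and then estimate $\norm{e_i}_{L^p}$ by the Gagliardo--Nirenberg inequality (Theorem \ref{thm:Nirenberg}), using $\norm{e_i}_{H^1}^2\le(1+C_{F,2,\Gamma_D}^2)\norm{\nabla e_i}_\Omega^2$ from Poincaré--Friedrichs. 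A short computation gives $\theta=d/q$, hence $\tfrac{2}{1-\theta}=\tfrac{2q}{q-d}$, which is precisely the temporal exponent encoded in $X(q)$. Rewriting all gradient factors in the $\sqrt{v_0}$-weighted norm (picking up powers of $\gamma^{-1}$) and applying Young's inequality with the conjugate exponents $\tfrac{2}{1+\theta}$ and $\tfrac{2}{1-\theta}$ absorbs a $\norm{\sqrt{v_0}\nabla e_i}_\Omega^2$ piece on the left and leaves a term $g(t)\norm{e_i}_\Omega^2$ whose coefficient is proportional to $\norm{\nabla u_0(t)}_{L^q}^{2/(1-\theta)}$, the proportionality being assembled from $C_G$, the factor $(1+C_{F,2,\Gamma_D}^2)$, and powers of $\gamma^{-1}$; the optimal choice of Young parameter is exactly what generates the constant $\mu$. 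Keeping $\tfrac12$ of the coercive term on the left and invoking the integral Gronwall lemma yields $\norm{e_i(t^\ast)}_\Omega^2\le A\exp\!\big(\int_0^{t^\ast}2g\big)$ with $A$ the initial-plus-forcing data; substituting this bound back into the gradient integral controls $\int_0^T\norm{\sqrt{v_0}\nabla e_i}_\Omega^2$ by the same right-hand side, and $\int_0^T g\,dt=C\,\norm{\nabla u_0}_{X(q)}^{2/(1-\theta)}$ produces the stated exponential factor.
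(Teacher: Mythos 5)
Your proposal is correct and follows essentially the same route as the paper's proof: subtracting the weak formulations, testing with $u_i-v_i$, splitting the flux difference so that $\norm{\sqrt{v_0}\nabla(u_i-v_i)}_\Omega^2$ appears as the coercive term, bounding the routine terms with H\"older and $v_0>\gamma$, treating the critical term $\int_\Omega (u_i-v_i)\nabla u_0\cdot\nabla(u_i-v_i)\,dx$ via Gagliardo--Nirenberg (Theorem \ref{thm:Nirenberg}) with $\theta=\frac d2-\frac dp$ and Young's inequality with exponents $\frac{2}{1+\theta}$, $\frac{2}{1-\theta}$, and closing with Gronwall. Your explicit flux identity and the verification that $\theta=d/q$ yields the temporal exponent $\frac{2q}{q-d}$ of $X(q)$ only make explicit what the paper does implicitly in \eqref{eqn:abstract_err_gron}--\eqref{eq:Nirenberg}, so there is no substantive difference.
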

\begin{proof}
	Subtracting the weak formulations for $u_i$ and $v_i$ and testing with $u_i-v_i$ yields
	\begin{align*}
		\frac12\frac{d}{dt}\int_\Omega(u_i-v_i)^2\,dx
		= &-\int_\Omega \left(u_0\nabla u_i-u_i\nabla u_0-v_0\nabla v_i+v_i\nabla v_0\right)\cdot\left(\nabla (u_i-v_i)\right)\,dx\\
		&+\langle R_i,u_i-v_i\rangle.
	\end{align*}
	Rearranging terms and using Hölder's inequality yields
	\begin{equation}\label{eqn:abstract_err_gron}
		\begin{aligned}
		\frac12\frac{d}{dt}\norm{u_i-v_i}_{\Omega}^2
		\leq &-\norm{\sqrt{v_0}\nabla(u_i-v_i)}_{\Omega}^2+\norm{v_0-u_0}_{L^\infty(\Omega)}\norm{\nabla u_i}_{\Omega}\norm{\nabla(u_i-v_i)}_{\Omega}\\
		&+\norm{v_i}_{L^\infty(\Omega)}\norm{\nabla(u_0-v_0)}_{\Omega}\norm{\nabla(u_i-v_i)}_{\Omega}\\
		&+\norm{u_i-v_i}_{L^p(\Omega)}\norm{\nabla u_0}_{L^q(\Omega)}\norm{\nabla(u_i-v_i)}_{\Omega}\\
		&+\norm{R_i}_{H^1_D(\Omega)'}\norm{\nabla (u_i-v_i)}_{\Omega},
		\end{aligned}
	\end{equation}
	with $\frac1p+\frac1q=\frac12$ and $q>d$.
	To bound the $L^p$ norm of $u_i-v_i$ we use the Gagliardo-Nirenberg inequality (Theorem \ref{thm:Nirenberg}) with $\theta=\frac d2-\frac dp$
	\begin{equation}
	\begin{aligned}
		\norm{u_i-v_i}_{L^p(\Omega)}\label{eq:Nirenberg}
		&\leq C_G\norm{u_i-v_i}_{H^1(\Omega)}^\theta\norm{u_i-v_i}_{\Omega}^{1-\theta}\\
		&\leq C_G\left(1+C_{F,2,\Gamma_D}^2\right)^{\frac{\theta}{2}}\norm{u_i-v_i}_{\Omega}^{1-\theta}\norm{\nabla(u_i-v_i)}_{\Omega}^{\theta}.
	\end{aligned}
	\end{equation}
	Using \eqref{eqn:abstract_err_gron},\eqref{eq:Nirenberg} and Young's inequality we arrive at
	\begin{align*}
		\frac12\frac{d}{dt}\norm{u_i-v_i}_{\Omega}^2
		\leq &-\frac12\norm{\sqrt{v_0}\nabla(u_i-v_i)}_{\Omega}^2
		+\frac6{\gamma}\norm{v_0-u_0}_{L^\infty(\Omega)}^2\norm{\nabla u_i}_{\Omega}^2
		+\frac6{\gamma}\norm{v_i}_{L^\infty(\Omega)}^2\norm{\nabla(u_0-v_0)}_{\Omega}^2\\
		&+C_G^\frac2{1-\theta}(1+C_{F,2,\Gamma_D}^2)^{\frac{\theta}{1-\theta}}\frac{\mu}{\gamma^{\frac{1+\theta}{1-\theta}}}\norm{\nabla u_0}_{L^q(\Omega)}^{\frac{2}{1-\theta}}\norm{u_i-v_i}_{\Omega}^2
		+\frac6{\gamma}\norm{R_i}_{H^1_D(\Omega)'}^2,
	\end{align*}
	with $\mu := \frac{1-\theta}{2}\left(\frac{1}{2(1+\theta)}\right)^\frac{\theta+1}{\theta-1}$.
	The claim then follows with the classical Gronwall inequality.
	\begin{multline*}
		\max_{t\in[0,T]} \norm{u_i-v_i}_{\Omega}^2+\int_0^{T}\norm{\sqrt{v_0}\nabla(u_i-v_i)}_{\Omega}^2\,dt
		\leq\\
		\left(2\norm{u_i^0-v_i^0}^2_{\Omega}
		+\frac{12}\gamma\norm{v_0-u_0}_{L^\infty([0,T]\times\Omega)}^2\norm{\nabla u_i}_{[0,T]\times\Omega}^2
		+\frac{12}\gamma\norm{\nabla(u_0-v_0)}_{[0,T]\times\Omega}^2
		+\frac{12}\gamma \norm{R_i}_{*}^2 \right)\\
		\exp\left(2C_G^{\frac{2}{1-\theta}}(1+C_{F,2,\Gamma_D}^2)^{\frac{\theta}{1-\theta}}\frac{\mu}{\gamma^{\frac{1+\theta}{1-\theta}}}\norm{\nabla u_0}_{X(q)}^{\frac{2}{1-\theta}}\right).
	\end{multline*}
\end{proof}
\subsection{A stability framework for the general model}\label{abserror:general}
We now derive a stability framework for the general model.
Let $(u_0,\dots,u_n,\Phi)$ be a weak solution of the perturbed system \eqref{eqn:fullperturbed} satisfying Assumption \ref{assumptions}.
Let $(v_0,\dots,v_n,\Psi)$ be a weak solution for the general model \eqref{eqn:general} in the sense of Theorem \ref{thm:existence}.
The main differences, compared to the previous section, are that we need to control the difference of the electric potential and that the solvent no longer solves the diffusion equation and we do not have a good bound for the difference $v_0-u_0$ in $L^\infty([0,T]\times\Omega)$.\\
We first derive a stability framework for the electric potential.
To do so, we note that the electric potential solves the Poisson equation.
Therefore, we can use the classical stability estimate for the Poisson equation obtained by the Poincar\'e inequality
\begin{align}\label{eqn:electricpot}
	\norm{\nabla (\Phi-\Psi)}_{\Omega} \leq C_{F,2,\Gamma_D}\frac{\abs{z}}{\lambda^2}\norm{u_0-v_0}_{\Omega}+\frac1{\lambda^2}\norm{R_\Phi}_{H^1_D(\Omega)'}\quad\forall t\in[0,T].
\end{align}
For the ion concentrations we can perform similar steps as in Theorem \ref{thm:abstract_error} with $F:=\nabla u_0-z\beta u_0\nabla \Phi$ and $G:=\nabla v_0-z\beta v_0\nabla \Psi$ instead of $\nabla u_0$ and $\nabla v_0$ respectively.
Furthermore, to estimate $\norm{(u_0-v_0)\nabla u_i}_{[0,T]\times\Omega}$ without using $\norm{u_0-v_0}_{L^\infty([0,T]\times\Omega)}$ we utilize the Sobolev inequality
\begin{align*}
	\int_0^T \int_\Omega (u_0-v_0)\nabla u_i\cdot\nabla (u_i-v_i)\,dx\,dt
	&\leq \int_0^T\norm{u_0-v_0}_{L^p(\Omega)}\norm{\nabla u_i}_{L^{\tilde{q}}(\Omega)}\norm{\nabla(u_i-v_i)}_{\Omega}\,dt\\
	&\leq C_S\norm{u_0-v_0}_{L^2(0,T;H^1(\Omega))}\norm{\nabla u_i}_{L^\infty(0,T;L^{\tilde{q}}(\Omega))}\norm{\nabla (u_i-v_i)}_{L^2(0,T;L^2(\Omega))},
\end{align*}
with $\frac1p+\frac1{\tilde{q}}=\frac12$ and $\tilde{q}>2$ for $d=2$ and $\tilde{q}\geq 3$ for $d=3$.
Therefore, here we need Assumption \ref{ass:perturbed} (ii), i.e. $\nabla u_i\in L^\infty(0,T;L^{\tilde{q}}(\Omega))$ instead of $L^2(0,T;L^2(\Omega))$.
This gives us the estimate
\begin{multline*}
	\max_{t\in[0,T]} \norm{u_i-v_i}_{\Omega}^2+\int_0^{T}\norm{\sqrt{v_0}\nabla(u_i-v_i)}_{\Omega}^2\,dt
	\leq\\
	\left(2\norm{u_i^0-v_i^0}^2_{\Omega}
	+\frac{12}\gamma\norm{\nabla (v_0-u_0)}_{[0,T]\times\Omega}^2\norm{\nabla u_i}_{L^\infty(0,T;L^q(\Omega))}^2
	+\frac{12}\gamma\norm{F-G}_{[0,T]\times\Omega}^2
	+\frac{12}\gamma \norm{R_i}_{*}^2 \right)\\
	\exp\left(2C_G^{\frac{2}{1-\theta}}(1+C_{F,2,\Gamma_D}^2)^{\frac{\theta}{1-\theta}}\frac{\mu}{\gamma^{\frac{1+\theta}{1-\theta}}}\norm{F}_{X(q)}^{\frac{2}{1-\theta}}\right)
\end{multline*}
with $\theta$ and $\mu$ from Theorem \ref{thm:abstract_error}.
Furthermore, we can bound the difference $F-G$ by the use of Sobolev inequality, Poincar\'e-Friedrichs inequality and equation \eqref{eqn:electricpot}, Theorem \ref{thm:friedirchsineq}
\begin{align*}
	\norm{F-G}_{[0,T]\times\Omega}^2
	&\leq\int_0^T\left(\norm{\nabla (u_0-v_0)}_{\Omega}+\abs{z\beta}\norm{(u_0-v_0)\nabla\Phi}_{\Omega}+\abs{z\beta}\norm{v_0\nabla(\Phi-\Psi)}_{\Omega}\right)^2\,dt\\
	&\leq\int_0^T\left(\norm{\nabla (u_0-v_0)}_{\Omega}+\abs{z\beta}\norm{u_0-v_0}_{L^p(\Omega)}\norm{\nabla\Phi}_{L^{\tilde{q}}(\Omega)}+\abs{z\beta}\norm{\nabla(\Phi-\Psi)}_{\Omega}\right)^2\,dt\\
	&\leq2\norm{\nabla (u_0-v_0)}_{[0,T]\times\Omega}^2\left(1+C_S\sqrt{1+C_{F,2,\Gamma_D}^2}\abs{z\beta}\norm{\nabla\Phi}_{L^\infty(0,T;L^{\tilde{q}}(\Omega))} +C_{F,2,\Gamma_{D}}\frac{\abs{z}^2\beta}{\lambda^2}\right)^2\\
	&\;+2\frac{\abs{z\beta}^2}{\lambda^2}\norm{R_\Phi}_{*}^2,
\end{align*}
where we used $0\leq v_0\leq 1$ in the second estimate.
We now turn our attention to the solvent concentration.
Since the solvent concentration does not solve the diffusion equation in the general model, we cannot use the classical stability framework here.
To control the difference $u_0-v_0$, we test the weak formulation for the solvent concentration equation \eqref{eqn:solvent} with $u_0-v_0$ and use Hölder's inequality and $0\leq v_0\leq1$ to arrive at
\begin{align*}
	\frac12\frac{d}{dt}\norm{u_0-v_0}_{\Omega}^2
	&\leq -\norm{\nabla (u_0-v_0)}_{\Omega}^2+\abs{z\beta}\norm{u_0-v_0}_{L^p(\Omega)}\norm{\nabla\Phi}_{L^{q}(\Omega)}\norm{\nabla(u_0-v_0)}_{\Omega}\\
	&+\frac{\abs{z\beta}}4\norm{\nabla(\Phi-\Psi)}_{\Omega}\norm{\nabla(u_0-v_0)}_{\Omega}+\norm{R_0}_{H^1_D(\Omega)'}\norm{\nabla(u_0-v_0)}_\Omega.
\end{align*}
The right hand side contains the $L^p$ norm of the difference $u_0-v_0$.
We estimate this term using the Gagliardo-Nirenberg inequality
%In this case we cannot use the Sobolev inequality, because then we cannot apply Gronwall's inequality.
with $\theta =\frac d2-\frac dp$ similar to equation \eqref{eq:Nirenberg} and use equation \eqref{eqn:electricpot} to arrive at
\begin{align*}
	\frac12\frac{d}{dt}\norm{u_0-v_0}_{\Omega}^2
	&\leq -\frac12\norm{\nabla (u_0-v_0)}_{\Omega}^2
	+C_G^{\frac2{1-\theta}}\mu\norm{u_0-v_0}^2_{\Omega}\norm{\nabla\Phi}^{\frac2{1-\theta}}_{L^q(\Omega)}\\
	&+\frac{\abs{z\beta}^2}{8}\left(C_{F,2,\Gamma_D}^2\frac{\abs{z}^2}{\lambda^4}\norm{u_0-v_0}^2_{\Omega}+\norm{R_\Phi}^2_{H^1_D(\Omega)'}\right)+2\norm{R_0}_{H^1_D(\Omega)'}^2.
\end{align*}
Applying Gronwall's inequality yields
\begin{multline*}
	\max_{t\in[0,T]}\norm{u_0-v_0}_{\Omega}^2+\int_0^T\norm{\nabla (u_0-v_0)}_{\Omega}^2\,dt
	\leq \left(2\norm{u_0^0-v_0^0}_{\Omega}^2
	+\frac{\abs{z\beta}^2}{4}\norm{R_\Phi}_{*}^2+4\norm{R_0}_{*}^2\right)\\
	\exp\left(2C_G^{\frac{2}{1-\theta}}(1+C_{F,2,\Gamma_D}^2)^{\frac{\theta}{\theta-1}}\mu\norm{\nabla\widehat{\Phi}}_{X(q)}^{\frac{2}{1-\theta}}+C_{F,2,\Gamma_D}^2T\frac{\abs{z}^4\beta^2}{8\lambda^4}\right).
\end{multline*}
We now summarize the inequalities from above in one theorem.
\begin{thm}\label{thm:abstract_general}
	Let the data satisfy Condition \ref{conditions} (C1) and (C2).
	Let Assumption \ref{assumptions} and \ref{ass:perturbed} hold.
	Let $(u_0,\dots,u_n, \Phi)$ be a weak solution of \eqref{eqn:fullperturbed} and $(v_0,\dots,v_n,\Psi)$ be a weak solution of \eqref{equations}-\eqref{eqn:initial}.
	The electric potentials $\Phi$ and $\Psi$ satisfy
	\begin{align*}
		\norm{\nabla (\Phi-\Psi)}_{\Omega} \leq C_{F,2,\Gamma_D}\frac{\abs{z}}{\lambda^2}\norm{u_0-v_0}_{\Omega}+\norm{R_\Phi}_{H^1_D(\Omega)'}\quad\forall t\in[0,T].
	\end{align*}
	The difference $u_0-v_0$ satisfies
	\begin{multline*}
		\max_{t\in[0,T]}\norm{u_0-v_0}_{\Omega}^2+\int_0^T\norm{\nabla (u_0-v_0)}_{\Omega}^2\,dt
		\leq \left(2\norm{u_0^0-v_0^0}_{\Omega}^2+\frac{\abs{z\beta}^2}{4}\norm{R_\Phi}_{*}^2+4\norm{R_0}_{*}^2\right)\\
		\exp\left(2C_G^{\frac{2}{1-\theta}}(1+C_{F,2,\Gamma_D}^2)^{\frac{\theta}{1-\theta}}\mu\norm{\nabla\widehat{\Phi}}_{X(q)}^{\frac{2}{1-\theta}}+C_{F,2,\Gamma_D}^2T\frac{\abs{z}^4\beta^2}{8\lambda^4}\right).
	\end{multline*}
	The difference $u_i-v_i$ for $i=1,\dots,n$ satisfies
	\begin{multline*}
		\max_{t\in[0,T]} \norm{u_i-v_i}_{\Omega}^2+\int_0^{T}\norm{\sqrt{v_0}\nabla(u_i-v_i)}_{\Omega}^2\,dt
		\leq\\
		\left(\norm{u_i^0-v_i^0}^2_{\Omega}
		+\frac{5}\gamma\norm{\nabla (v_0-u_0)}_{[0,T]\times\Omega}^2\norm{\nabla u_i}_{L^\infty(0,T;L^{\tilde{q}}(\Omega))}^2
		+\frac{5}\gamma\norm{F-G}_{[0,T]\times\Omega}^2
		+\frac{5}\gamma\norm{R_i}_{*}^2 \right)\\
		\exp\left(2C_G^{\frac{2}{1-\theta}}(1+C_{F,2,\Gamma_D}^2)^{\frac{\theta}{1-\theta}}\frac\mu{\gamma^{\frac{1+\theta}{1-\theta}}}\norm{F}_{X(q)}^{\frac{2}{1-\theta}}\right),
	\end{multline*}
	with $\tilde{q},q>2$ for $d=2$ and $\tilde{q},q\geq 3$ for $d=3$ and $\theta=\frac d2-\frac dp,\mu=\frac{1-\theta}{2}\left(\frac{1}{2(1+\theta)}\right)^\frac{\theta+1}{\theta-1}$.
\end{thm}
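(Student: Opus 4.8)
The plan is to establish the three displayed inequalities separately and then collect them, treating the electric-potential estimate as the driver that couples into the other two. Since all three quantities $u_i-v_i$, $u_0-v_0$ and $\Phi-\Psi$ interact through the field, I would order the steps so that each estimate only feeds into the next.

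First I would handle the potential. Because $-\lambda^2\Delta\Psi = z(1-v_0)$ holds exactly while $-\lambda^2\Delta\Phi - z(1-u_0)=R_\Phi$, the difference satisfies $-\lambda^2\Delta(\Phi-\Psi)=z(v_0-u_0)+R_\Phi$ weakly. By Assumption \ref{ass:perturbed} (i) both potentials carry the same Dirichlet datum, so $\Phi-\Psi\in H^1_D(\Omega)$ is an admissible test function; testing and applying Cauchy--Schwarz together with the Poincar\'e--Friedrichs inequality (Theorem \ref{thm:friedirchsineq}) reproduces \eqref{eqn:electricpot}, i.e.\ the first asserted bound. This step is pointwise in time and needs no regularity beyond $H^1_D$.

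Next I would bound the solvent concentration. Under Condition (C5) the solvent obeys \eqref{eqn:solvent}, and summing the perturbed equations over $i$ yields the corresponding perturbed solvent equation with residual $R_0:=-\sum_{i=1}^nR_i$. Subtracting the weak forms and testing with $u_0-v_0$ gives $\tfrac12\tfrac{d}{dt}\norm{u_0-v_0}_\Omega^2\leq-\norm{\nabla(u_0-v_0)}_\Omega^2+(\text{drift})+\langle R_0,u_0-v_0\rangle$. The troublesome term $(u_0-v_0)\nabla\Phi$ is controlled by H\"older together with the Gagliardo--Nirenberg inequality (Theorem \ref{thm:Nirenberg}), whose $H^1$-factor is absorbed into the dissipation by Young's inequality, while the potential term is eliminated through the first bound. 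What remains is a differential inequality $\tfrac{d}{dt}\norm{u_0-v_0}_\Omega^2\leq c\,\norm{\nabla\Phi}_{L^q(\Omega)}^{2/(1-\theta)}\norm{u_0-v_0}_\Omega^2+(\text{residuals})$, and Gronwall in time delivers the second bound, with $\norm{\nabla\widehat{\Phi}}_{X(q)}$ appearing in the exponential.

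Finally I would treat the ion concentrations by repeating the argument of Theorem \ref{thm:abstract_error} with the full flux $F=\nabla u_0-z\beta u_0\nabla\Phi$ (resp.\ $G$) in place of $\nabla u_0$ (resp.\ $\nabla v_0$). The one genuinely new estimate is $\int_0^T\int_\Omega(u_0-v_0)\nabla u_i\cdot\nabla(u_i-v_i)$: since in the general model the $L^\infty([0,T]\times\Omega)$ control of $u_0-v_0$ from Section \ref{section:heat} is no longer available, I would use the Sobolev inequality together with the enhanced regularity $\nabla u_i\in L^\infty(0,T;L^{\tilde q}(\Omega))$ from Assumption \ref{ass:perturbed} (ii), trading $\norm{u_0-v_0}_{L^\infty}$ for $\norm{u_0-v_0}_{L^2(0,T;H^1)}$. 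The coefficient $\norm{F-G}_{[0,T]\times\Omega}^2$ is then expanded by the triangle inequality into the gradient difference, the term $\abs{z\beta}(u_0-v_0)\nabla\Phi$ (Sobolev plus $0\leq v_0\leq 1$) and $\abs{z\beta}v_0\nabla(\Phi-\Psi)$ (the first bound again), producing a factor $\norm{\nabla(u_0-v_0)}_{[0,T]\times\Omega}^2$ plus a residual; a closing Gronwall in the spirit of Theorem \ref{thm:abstract_error} finishes the estimate. The main obstacle is precisely this loss of the maximum-principle--based $L^\infty$ bound on $u_0-v_0$: in the reduced model $v_0$ solved the heat equation and contractivity of $(e^{t\Delta})_{t\geq0}$ supplied that bound for free, whereas the electric drift in \eqref{eqn:solvent} destroys that structure. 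Making the Sobolev substitution work forces the stronger hypothesis on $\nabla u_i$ and careful bookkeeping so that every occurrence of $u_0-v_0$ is reduced to its $L^2(0,T;H^1)$ norm, which the solvent estimate controls, while keeping the exponents $\theta,\mu,\gamma$ consistent across the three Gronwall applications is the delicate part.
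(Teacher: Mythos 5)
Your proposal is correct and follows essentially the same route as the paper: the Poisson stability estimate with Poincar\'e--Friedrichs for $\Phi-\Psi$, testing the solvent equation with $u_0-v_0$ and absorbing the drift via Gagliardo--Nirenberg and Young before applying Gronwall, and rerunning the argument of Theorem \ref{thm:abstract_error} with $F$ and $G$ in place of $\nabla u_0$ and $\nabla v_0$, replacing the unavailable $L^\infty$ control of $u_0-v_0$ by the Sobolev substitution using $\nabla u_i\in L^\infty(0,T;L^{\tilde q}(\Omega))$ and expanding $\norm{F-G}$ by the triangle inequality exactly as in Section \ref{abserror:general}. The only differences are cosmetic: your ordering (potential, solvent, ions) reflects the logical dependencies more directly than the paper's presentation order, and you attribute the use of $0\leq v_0\leq 1$ to the $(u_0-v_0)\nabla\Phi$ term rather than to $v_0\nabla(\Phi-\Psi)$, which is a harmless slip.
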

Let us comment on our stability theory, Theorem \ref{thm:abstract_general}, and its relation to commonly used methods for stability and uniqueness of cross diffusion systems.
\begin{remark}\label{Whyl2}
	\textcite{gerstenmayer2018} showed a uniqueness result for the general model \eqref{eqn:general} using the Gajewski method.
	They use the entropy functional
	\begin{align*}
		H[v] = \sum_{i=0}^n\int_\Omega v_i(\log(v_i)-1)\,dx
	\end{align*}
	for a solution $v=(v_0,\dots,v_n)$ of \eqref{eqn:general}.
	Following the route of Gajewski (see \cite{gajewski1994}), this leads to the metric
	\begin{align*}
		d(u,v) = H[u]+H[v]-2H\left[\frac{u+v}{2}\right]
	\end{align*}
	for two solutions $v=(v_0,\dots,v_n)$ and $u=(u_0,\dots,u_n)$.
	This metric can be seen as a symmetrization of the often used relative entropy (see \cite[Remark 4]{chen2018}).\\
	This entropy structure is often used to prove weak uniqueness and weak-strong uniqueness results for cross-diffusion systems see also \cite{burger2010,zamponi2015,berendsen2020,jungel2015,hopf2022}.
	Instead, we use the $L^2$ norm to establish a stability framework.
	Using the $L^2$ norm in this context is advantageous.
	The $L^2$ metric is weaker in the sense that
	\begin{align*}
		\norm{u-v}_{L^2(\Omega)}^2\leq \norm{\sqrt{u}-\sqrt{v}}_{L^2(\Omega)}^2\leq d(u,v)
	\end{align*}
	for two solution $v=(v_0,\dots,v_n)$ and $u=(u_0,\dots,u_n)$.
	This allows us to obtain an a posteriori error estimate.
	More precisely, in the Gajewski metric setting the equation for $u_i$ is tested with $\log\left(\frac{2u_i}{u_i+v_i}\right)$.
	It is unclear to us how to bound the residual tested with this function, since $\log\left(\frac{2u_i}{u_i+v_i}\right)\notin H^1(\Omega)$ for $u_i=0$ outside of a set of measure zero.
	Whether this problem can be overcome by the use of another numerical method or by reconstructing in a different way is an interesting question beyond the scope of this paper.\\
	One reason the Gajewski approach is used in the weak-strong uniqueness proof is that \textcite{gerstenmayer2018} aim to avoid the assumption that $\nabla u_0\in L^\infty(0,T;L^q(\Omega))$ for one solution.
	In the setting of a posteriori analysis, the role of $u_0$ is played by a continuous piecewise polynomial function and therefore $\nabla u_0\in L^\infty(0,T;L^q(\Omega))$ is satisfied.
%	Furthermore, in the bound we only use the $L^2(0,T;L^q(\Omega))$ norm of $\nabla u_0$ and this is bounded if $\nabla u_0^0\in L^2(\Omega)$ and $q<\frac{2d}{d-2}$.\\
%	The question remains, why is an $L^2$ stability result available for the model \eqref{equations}-\eqref{eqn:initial}?
%	One part of the answer is that the $L^2$ metric is also a Gajewski metric, but with entropy functional
%	\begin{align*}
%		H[f] = \int_\Omega f^2\,dx.
%	\end{align*}
%	In this case the diffusion matrix is given by
%	\begin{align*}
%		(A(u))_{i,j} =
%		\begin{cases}
%			1-\sum_{\substack{j=1\\i\neq j}}^n u_j &\text{if } i=j\\
%			u_i &\text{if } i\neq j.
%		\end{cases}
%	\end{align*}
%	One can easily see that the matrix $A(u)$ has $n$ eigenvectors and spectrum $\{1,u_0\}$ and is therefore positive semidefinite.
\end{remark}
\section{A posteriori error estimator}\label{section:residual}
We now establish a posteriori error estimates using the stability frameworks from Section \ref{section:abstract_error} and the reconstruction defined in Section \ref{section:scheme}.
The only thing left is to establish a computable upper bound for the residuals $R_i$ in the $L^2(0,T;H^1_D(\Omega)')$-norm.
To achieve this we use the properties of the Morley type reconstruction from Section \ref{subsec:Morley}.
We prove the first key property in Lemma \ref{orth_recon}.
The residual for the reduced model is given by
\begin{align*}
	\langle R_i,f\rangle
	= \int_\Omega \partial_t \widehat{u_i}\,f\,dx +\int_\Omega \left(\widehat{u_0}\nabla \widehat{u_i}-\widehat{u_i}\nabla \widehat{u_0}\right)\cdot\nabla f\,dx
\end{align*}
for all $f\in H^1_D(\Omega)$.
We first present the bound of the residual for the reduced model \eqref{eqn:reduced} in detail.
Together with the stability framework and bounds on the difference $u_0-v_0$ from Section \ref{section:heat}, we can state the a posteriori error estimate for the difference $u_i-v_i$ ($i=1,\dots,n$).\\
The bound for the general model can be deduced in a similar fashion. 
We state the residual bound and a posteriori estimate for the general model in Section \ref{subsec:general_residual}.

\subsection{Bounding the $H^1_D(\Omega)'$ Norm of the residual for the reduced model}\label{Section:Residual_bound}
Lemma \ref{orth_recon} is crucial to estimate the element residual in Theorem \ref{thm:Residual_bound}.
Since we do not reconstruct with respect to convection, we get an error term on the right hand side, that is not present in \cite[Lemma 5.2]{nicaise2006}.
\begin{lemma}\label{orth_recon}
	The reconstruction $\widehat{u_i}$ satisfies
	\begin{align*}
		\int_K \partial_t u_{i,h}^j dx -\int_K \operatorname{div}(\widehat{u_0}^j\nabla \widehat{u_i}^j-\widehat{u_i}^j\nabla \widehat{u_0}^j) dx=-\sum_{\sigma\in\mathcal{E}_K}\left(u_{i,\sigma}^jF_{\sigma}^K(u_0^j)-\int_\sigma \widehat{u_i}^j\nabla \widehat{u_0}^j\cdot n_{K,\sigma}\, dS\right)
	\end{align*}
	for all $K\in\mathcal{T}$, $i=1,\dots,n$ and $j=1,\dots,J$.
\end{lemma}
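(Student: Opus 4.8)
The plan is to expand the two integrals on the left-hand side of the claimed identity separately and to observe that the diffusive part of the numerical flux cancels, leaving exactly the convective discrepancy on the right. We are in the reduced regime, so $z_i=0$ in the numerical flux and it reads $\mathcal{F}_{i,K,\sigma}^j = -u_{0,\sigma}^j F_\sigma^K(u_i^j) + u_{i,\sigma}^j F_\sigma^K(u_0^j)$.

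First I would treat the time-derivative term. Interpreting $u_{i,h}$ as piecewise constant in space and piecewise linear in time, so that on $(t_{j-1},t_j)$ its time derivative is the difference quotient, gives $\int_K \partial_t u_{i,h}^j\,dx = \abs{K}\frac{u_{i,K}^j-u_{i,K}^{j-1}}{\tau_j}$. The finite volume scheme \eqref{eqn:scheme} then turns this into $-\sum_{\sigma\in\mathcal{E}_K}\mathcal{F}_{i,K,\sigma}^j = \sum_{\sigma\in\mathcal{E}_K}\bigl(u_{0,\sigma}^j F_\sigma^K(u_i^j) - u_{i,\sigma}^j F_\sigma^K(u_0^j)\bigr)$.

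Next I would rewrite the divergence term. Since $\widehat{u_0}^j$ and $\widehat{u_i}^j$ are polynomial, hence smooth, on each $K$, the divergence theorem gives $\int_K \operatorname{div}(\widehat{u_0}^j\nabla\widehat{u_i}^j-\widehat{u_i}^j\nabla\widehat{u_0}^j)\,dx = \sum_{\sigma\in\mathcal{E}_K}\int_\sigma(\widehat{u_0}^j\nabla\widehat{u_i}^j-\widehat{u_i}^j\nabla\widehat{u_0}^j)\cdot n_{K,\sigma}\,dS$. The crucial input is the defining edge condition of the Morley reconstruction of $\widehat{u_i}^j$, namely $\int_\sigma \widehat{u_0}^j\,\nabla\widehat{u_i}^j\cdot n_{K,\sigma}\,dS = u_{0,\sigma}^j F_\sigma^K(u_i^j)$, which identifies the first edge integral with $u_{0,\sigma}^j F_\sigma^K(u_i^j)$ on every edge, including the Neumann edges where both sides vanish. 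Hence the divergence term equals $\sum_{\sigma\in\mathcal{E}_K}\bigl(u_{0,\sigma}^j F_\sigma^K(u_i^j) - \int_\sigma \widehat{u_i}^j\nabla\widehat{u_0}^j\cdot n_{K,\sigma}\,dS\bigr)$.

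Subtracting the two expressions, the terms $u_{0,\sigma}^j F_\sigma^K(u_i^j)$ — the discretized diffusive flux $\widehat{u_0}^j\nabla\widehat{u_i}^j$ of the species — cancel edgewise, and what remains is exactly $-\sum_{\sigma\in\mathcal{E}_K}\bigl(u_{i,\sigma}^j F_\sigma^K(u_0^j)-\int_\sigma\widehat{u_i}^j\nabla\widehat{u_0}^j\cdot n_{K,\sigma}\,dS\bigr)$, the claimed identity. No step is genuinely hard; the only point requiring care is the bookkeeping of signs and flux conventions, in particular applying the Morley edge condition with the correct outer normal $n_{K,\sigma}$ so that the diffusive contributions match term by term. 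The surviving term, reflecting that we did not reconstruct against the convective flux $u_{i,\sigma}^j F_\sigma^K(u_0^j)$, is precisely the extra error contribution announced before the lemma.
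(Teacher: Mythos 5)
Your proposal is correct and follows essentially the same route as the paper's proof: Gauss' theorem on each element $K$, the defining edge condition $\int_\sigma \widehat{u_0}^j\nabla\widehat{u_i}^j\cdot n_{K,\sigma}\,dS = u_{0,\sigma}^jF_\sigma^K(u_i^j)$ of the Morley reconstruction, and the finite volume scheme (with $z=0$) to replace $\int_K\partial_t u_{i,h}^j\,dx$ by $\sum_{\sigma\in\mathcal{E}_K}\bigl(u_{0,\sigma}^jF_\sigma^K(u_i^j)-u_{i,\sigma}^jF_\sigma^K(u_0^j)\bigr)$. The only cosmetic difference is bookkeeping: the paper adds and subtracts the convective flux within one chain of equalities, while you compute the two sides separately and cancel the diffusive contributions edgewise, which is the same argument.
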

\begin{proof}
	We first use Gauss' theorem on every element $K\in\mathcal{T}$ to arrive at
	\begin{align*}
		\int_K \partial_t u_{i,h}^j dx -\int_K \operatorname{div}(\widehat{u_0}^j\nabla \widehat{u_i}^j-\widehat{u_i}^j\nabla \widehat{u_0}^j) dx
		&= \int_K \partial_t u_{i,h}^j dx -\sum_{\sigma\in\mathcal{E}_K}\int_\sigma \left(\widehat{u_0}^j\nabla \widehat{u_i}^j-\widehat{u_i}^j\nabla \widehat{u_0}^j\right)\cdot n_{K,\sigma} dS\\
		&= \int_K \partial_t u_{i,h}^j dx
		-\sum_{\sigma\in\mathcal{E}_K}\left(u_{0,\sigma}^jF^K_{\sigma}(u_i^j)-u_{i,\sigma}^jF_{\sigma}^K(u_0^j)\right)\\
		&-\sum_{\sigma\in\mathcal{E}_K}\left(u_{i,\sigma}^jF_{\sigma}^K(u_0^j)-\int_\sigma \widehat{u_i}^j\nabla \widehat{u_0}^j\cdot n_{K,\sigma}\, dS\right).
	\end{align*}
	The claim now follows from the definition of the numerical scheme.
\end{proof}
\begin{remark}
	Notice that we used $\partial_t u_{i,h}^j$ in Lemma \ref{orth_recon} with $u_{i,h}$ the piecewiese constant in space finite volume solution and not the reconstruction $\widehat{u_i}$.
	The reason for this is, that the finite volume solution solves the equation
	\begin{align*}
		m(K)\partial_t u_{i,h}^j = \sum_{\sigma\in\mathcal{E}_K} u_{0,\sigma}^j F_{\sigma}^K(u_i^j)-u_{i,\sigma}^j F_\sigma^K(u_0^j)
	\end{align*}
	for all $i=1,\dots,n$, $j=0,\dots,J$ and $K\in\mathcal{T}$.
	The reconstruction is done in such a way, that we treat $\partial_t u_{i,h}^j$ as a right hand side.
	Therefore, similar to \cite[Lemma 5.2]{nicaise2006} we have to use $\partial_t u_{i,h}^j$ instead of $\partial_t \widehat{u_i}^j$.
\end{remark}
We can now bound the $H^1_D(\Omega)'$ norm of $R_i$.
\begin{thm}\label{thm:Residual_bound}
	The residual $R_i$ is bounded in $H^1_D(\Omega)'$ by
	\begin{align*}
		\norm{R_i(t)}_{H^1_D(\Omega)'} &\leq R_{S,i}^j+R_{T,i}^j+R_{R,i}^j\quad\forall t\in[t_{j-1},t_j]
	\end{align*}
	where the spacial bound $R_{S,i}^j$, temporal bound $R_{T,i}^j$ and reconstruction bound $R_{R,i}^j$ are given by
	\begin{align*}
		R_{S,i}^j &= \sqrt{2}\left(\sum_{K\in\mathcal{T}}\left(h_K^2C_{P,2}^2\norm{\partial_t u_{i,h}^j-\operatorname{div}\left(\widehat{u_0}^j\nabla \widehat{u_i}^j-\widehat{u_i}^j\nabla \widehat{u_0}^j\right)}_K^2\right.\right.\\
		&\left.\left.+\frac{N_\partial}{4} C_{\text{app},2}^2h_K\sum_{\sigma\in\mathcal{E}_K\setminus\mathcal{E}^D}\norm{\llbracket \widehat{u_0}^j\nabla \widehat{u_i}^j-\widehat{u_i}^j\nabla \widehat{u_0}^j\rrbracket}_\sigma^2\right)\right)^\frac12\\
		R_{T,i}^j &= \norm{\widehat{u_0}^j\nabla \widehat{u_i}^j-\widehat{u_i}^j\nabla \widehat{u_0}^j-\widehat{u_0}^{j-1}\nabla \widehat{u_i}^{j-1}+\widehat{u_i}^{j-1}\nabla \widehat{u_0}^{j-1}}_\Omega\\
		R_{R,i}^j &=C_{F,2,\Gamma_D}\norm{\partial_t (\widehat{u_i}^j-u_{i,h}^j)}_{\Omega} +\frac{C_{\text{app},2}N_\partial^\frac12}{2}\left(\sum_{K\in\mathcal{E}}\sum_{\sigma\in\mathcal{E}_K\setminus\mathcal{E}^N}h_\sigma\norm{\widehat{u_i}^j\dc{\nabla \widehat{u_0}^j}\cdot n_{K,\sigma}-u_{i,\sigma}\frac{F_{\sigma}^K(u_0^j)}{\abs{\sigma}}}_{\sigma}^2\right)^\frac12.
	\end{align*}
\end{thm}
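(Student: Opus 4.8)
The plan is to test the residual against an arbitrary $f\in H^1_D(\Omega)$, peel off a temporal and a time-derivative reconstruction contribution, and then treat the remaining elliptic-type residual at the new time level $t_j$ by element-wise integration by parts against the mean-corrected test function $f-\M_K f$, exactly as in the proof of Lemma \ref{lemma:EllipticBound}. Writing $\mathcal{G}^j := \widehat{u_0}^j\nabla\widehat{u_i}^j-\widehat{u_i}^j\nabla\widehat{u_0}^j$, I would first replace the time-dependent flux $\widehat{u_0}(t)\nabla\widehat{u_i}(t)-\widehat{u_i}(t)\nabla\widehat{u_0}(t)$ by its value $\mathcal{G}^j$ at $t_j$; after a short computation exploiting that the reconstruction is piecewise linear in time, Cauchy--Schwarz bounds this time-interpolation error by $R_{T,i}^j\norm{\nabla f}_\Omega$. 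Next I would split $\partial_t\widehat{u_i}=\partial_t u_{i,h}+\partial_t(\widehat{u_i}-u_{i,h})$, where $\partial_t u_{i,h}$ is the piecewise constant finite-volume difference quotient; the term $\int_\Omega\partial_t(\widehat{u_i}-u_{i,h})f\,dx$ is controlled by Cauchy--Schwarz and the Poincar\'e--Friedrichs inequality (Theorem \ref{thm:friedirchsineq}) by $C_{F,2,\Gamma_D}\norm{\partial_t(\widehat{u_i}-u_{i,h})}_\Omega\norm{\nabla f}_\Omega$, the first summand of $R_{R,i}^j$.

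The core of the argument is the estimate of $\int_\Omega\partial_t u_{i,h}\,f\,dx+\int_\Omega\mathcal{G}^j\cdot\nabla f\,dx$. I would integrate the flux integral by parts on each cell and combine it with the element integral of $\partial_t u_{i,h}$, so that the element residual $\partial_t u_{i,h}^j-\operatorname{div}\mathcal{G}^j$ appears. Tested against $f-\M_K f$, the Poincar\'e inequality produces the factor $C_{P,2}h_K$ and hence the volume contribution to $R_{S,i}^j$. The mean contributions $\M_K f\int_K(\partial_t u_{i,h}^j-\operatorname{div}\mathcal{G}^j)\,dx$ do not vanish, but Lemma \ref{orth_recon} identifies each of them with the reconstruction flux mismatch $u_{i,\sigma}^jF_\sigma^K(u_0^j)-\int_\sigma\widehat{u_i}^j\nabla\widehat{u_0}^j\cdot n_{K,\sigma}\,dS$; this quasi-orthogonality is what makes the mean parts computable rather than forcing them to be discarded.

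It remains to organise the edge contributions. On interior edges the integration by parts produces the jump $\llbracket\mathcal{G}^j\rrbracket$, which, after subtracting a cell mean of $f$, is estimated by the edge approximation inequality (Theorem \ref{pFaceInterpolation}) with factor $C_{\text{app},2}h_K^{1/2}$, giving the jump part of $R_{S,i}^j$; the factors $\sqrt2$ and $N_\partial$ arise from Cauchy--Schwarz over the at most $N_\partial$ edges of each cell, as in Lemma \ref{lemma:EllipticBound}. The decisive point is that $\int_\Omega\partial_t u_{i,h}\,f\,dx$, rewritten through the finite-volume scheme as $\sum_K\M_K f\sum_\sigma\bigl(u_{0,\sigma}^jF_\sigma^K(u_i^j)-u_{i,\sigma}^jF_\sigma^K(u_0^j)\bigr)$, must be processed together with the mean edge contributions of $\mathcal{G}^j$ and the mismatch terms supplied by Lemma \ref{orth_recon}, so that the individually $O(1)$ quantities $\int_\Omega\partial_t u_{i,h}f\,dx$ and $\int_\Omega\mathcal{G}^j\cdot\nabla f\,dx$ cancel against each other up to mesh-scaled remainders. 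Since the reconstruction enforces $\int_\sigma\widehat{u_0}^j\nabla\widehat{u_i}^j\cdot n_{K,\sigma}\,dS=u_{0,\sigma}^jF_\sigma^K(u_i^j)$, which is conserved across edges, whereas the convective integral $\int_\sigma\widehat{u_i}^j\nabla\widehat{u_0}^j\cdot n_{K,\sigma}\,dS$ does not reproduce $u_{i,\sigma}^jF_\sigma^K(u_0^j)$, precisely the convective defect survives as the density $\widehat{u_i}^j\dc{\nabla\widehat{u_0}^j}\cdot n_{K,\sigma}-u_{i,\sigma}^jF_\sigma^K(u_0^j)/\abs{\sigma}$, tested against $f-\M_K f$ and bounded by Theorem \ref{pFaceInterpolation}; this is the second summand of $R_{R,i}^j$. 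Collecting the four groups and taking the supremum over $\norm{\nabla f}_\Omega\le1$ yields the claimed bound.

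I expect the main obstacle to be exactly this last reorganisation: unlike the elliptic reconstruction of \cite{nicaise2006}, here the convection $\widehat{u_i}^j\nabla\widehat{u_0}^j$ is not matched to the numerical flux, so the flux conservation of the scheme and the quasi-orthogonality of Lemma \ref{orth_recon} must be used in tandem to guarantee that the genuinely $O(1)$ divergence contributions cancel and leave only the mesh-scaled element residual, the flux jump, and the computable convective mismatch. Bookkeeping of which mean term cancels which, and checking that no un-scaled remainder is left behind, is the delicate part; everything else reduces to Cauchy--Schwarz combined with Theorems \ref{thm:friedirchsineq} and \ref{pFaceInterpolation}.
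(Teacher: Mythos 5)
Your proposal is correct and follows essentially the same route as the paper's proof: split off the temporal interpolation error ($R_{T,i}^j$) and the difference $\partial_t(\widehat{u_i}-u_{i,h})$ via Poincar\'e--Friedrichs, then treat the remaining time-level-$j$ residual by element-wise integration by parts against $f-\M_K f$, using Lemma \ref{orth_recon} to convert the cell means into the convective flux mismatch, the scheme's flux conservation to eliminate the $O(1)$ edge means, and Theorem \ref{pFaceInterpolation} for the jump and mismatch terms. The paper's argument organizes this as the split $I=I_1+I_2+I_3$ with the identities \eqref{eqn:jump_i}--\eqref{eqn:conservation_residual} doing exactly the bookkeeping you flag as delicate, so there is no substantive difference to report.
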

\begin{proof}
	Let $f\in H^1_D(\Omega)$.
	We split the residual into a temporal part $II$ and the rest $I$
	\begin{multline*}
		\langle R[u]_i,f\rangle
		= \underbrace{\langle \partial_t \widehat{u_i},f\rangle +\langle \widehat{u_0}^j\nabla \widehat{u_i}^j-\widehat{u_i}^j\nabla \widehat{u_0}^j,\nabla f\rangle}_{=:I}\\
		+\underbrace{\langle \widehat{u_0}\nabla \widehat{u_i}-\widehat{u_i}\nabla \widehat{u_0}-\widehat{u_0}^j\nabla \widehat{u_i}^j+\widehat{u_i}^j\nabla \widehat{u_0}^j,\nabla f\rangle}_{=:II}\\
	\end{multline*}
	for $t\in(t_{j-1},t_{j})$.
	For the first part $I$, we use Gauss' theorem on every triangle $K\in\mathcal{T}$ and Lemma \ref{orth_recon} to arrive at
	\begin{align*}
		I &=\int_\Omega \partial_t (\widehat{u_i}-u_{i,h})f\,dx+\sum_{K\in\mathcal{T}} \int_K (\partial_t u_{i,h} - \operatorname{div}(\widehat{u_0}^j\nabla \widehat{u_i}^j-\widehat{u_i}^j\nabla \widehat{u_0}^j))f\, dx\\
		&+\sum_{\sigma\in\mathcal{E}_K\setminus\mathcal{E}^D}\int_\sigma (\widehat{u_0}\nabla \widehat{u_i} - \widehat{u_i}\nabla \widehat{u_0})\cdot n_{K,\sigma} f\, dS\\
		&=\underbrace{\int_\Omega \partial_t (\widehat{u_i}-u_{i,h})f\,dx}_{=:I_1}+\underbrace{\sum_{K\in\mathcal{T}} \int_K (\partial_t u_{i,h} - \operatorname{div}(\widehat{u_0}^j\nabla \widehat{u_i}^j-\widehat{u_i}^j\nabla \widehat{u_0}^j))(f-\M_Kf) dx}_{=:I_2}\\
		&\underbrace{-\sum_{K\in\mathcal{T}}\sum_{\sigma\in\mathcal{E}_K}\left(u_{i,\sigma}^jF_{\sigma}^K(u_0^j)-\int_\sigma \widehat{u_i}^j\nabla \widehat{u_0}^j\cdot n_{K,\sigma}\,dS\right)\M_Kf +\sum_{\sigma\in\mathcal{E}_K\setminus\mathcal{E}^D}\int_\sigma\left(\widehat{u_0}^j\nabla \widehat{u_i}^j -\widehat{u_i}^j\nabla \widehat{u_0}^j\right)\cdot n_{K,\sigma} f\, dS}_{=:I_3} 
	\end{align*}
	For $I_1$, we use Hölder's inequality and Poincar\'e-Friedrichs inequality (Theorem \ref{thm:friedirchsineq}) to arrive at
	\begin{align}\label{eqn:bound_I_1}
		I_1\leq C_{F,2,\Gamma_D}\norm{\partial_t(\widehat{u_i}-u_{i,h})}_\Omega\norm{\nabla f}_\Omega.
	\end{align}
	Notice, that similar to the proof of Theorem \ref{thm:H1_heat_bound} we estimated the $H^1_D(\Omega)'$-norm of $\partial_t(\widehat{u_i}-u_{i,h})$ with the $L^2$-norm.
	Similarly, the $L^2$-norm converges linearly in $h$ and therefore we use the $L^2$-norm rather than the $H^1_D(\Omega)'$-norm.\\
	For $I_2$ we also use Hölder's inequality and Poincar\'e inequality on every element
	\begin{align}\label{eqn:bound_I_2}
		I_2 
		\leq C_{\text{P,2}}\sum_{K\in\mathcal{T}}h_K\norm{\partial_t u_{i,h} - \operatorname{div}\left(\widehat{u_0}^j\nabla \widehat{u_i}^j-\widehat{u_i}^j\nabla \widehat{u_0}^j\right)}_K\norm{\nabla f}_K.
	\end{align}
	For the last part $I_3$, we use the properties of the reconstruction to derive that
	\begin{align}\label{eqn:jump_i}
		\sum_{K\in\mathcal{T}}\sum_{\sigma\in\mathcal{E}_K\setminus\mathcal{E}^D}\int_\sigma\llbracket \widehat{u_0}^j\nabla \widehat{u_i}^j\rrbracket f\,dS
		=\sum_{K\in\mathcal{T}}\sum_{\sigma\in\mathcal{E}_K\setminus\mathcal{E}^D}\int_\sigma\llbracket \widehat{u_0}^j\nabla \widehat{u_i}^j\rrbracket (f-\M_K f)\,dS
	\end{align}
	and
	\begin{align}\label{eqn:jump_av_equation}
		\int_\sigma \widehat{u_i}^j\nabla\widehat{u_0}^j\cdot n_{K,\sigma}(f-\M_Kf)\,dS
		= \int_\sigma \left(\frac12\jump{\widehat{u_i}^j\nabla\widehat{u_0}^j}+\widehat{u_i}^j\dc{\nabla \widehat{u_0}^j}\right)(f-\M_Kf)\,dS.
	\end{align}
	Furthermore, with the conservation of mass and $f\in H^1_D(\Omega)$ follows
	\begin{align}\label{eqn:conservation_residual}
%		\sum_{K\in\mathcal{T}}\sum_{\sigma\in\mathcal{E}_K\setminus\mathcal{E}^N}u_{i,\sigma}^jF_\sigma^K(u_0^j)\M_
		\sum_{K\in\mathcal{T}}\sum_{\sigma\in\mathcal{E}_K\setminus\mathcal{E}^N} \int_\sigma u_{i,\sigma}^j\frac{F_\sigma^K(u_0^j)}{\abs{\sigma}}f\,dS = 0
	\end{align}
	Using \eqref{eqn:jump_av_equation} and \eqref{eqn:jump_i} for the second equality and \eqref{eqn:conservation_residual} in the last equality yields
	\begin{align*}
		I_3
		&= \sum_{K\in\mathcal{T}}\sum_{\sigma\in\mathcal{E}_K}\left(\int_\sigma \widehat{u_i}^j\nabla \widehat{u_0}^j\cdot n_{K,\sigma}(\M_Kf-f)+\left(\widehat{u_0}^j\nabla \widehat{u_i}^j\right)\cdot n_{K,\sigma} f\, dS-u_{i,\sigma}^jF_{\sigma}^K(u_0^j)\M_Kf\right) \\
		&= \sum_{K\in\mathcal{T}}\sum_{\sigma\in\mathcal{E}_K}\left(\int_\sigma \left(\frac12\jump{\widehat{u_i}^j\nabla \widehat{u_0}^j}+\widehat{u_i}^j\dc{\nabla \widehat{u_0}^j}\cdot n_{K,\sigma}\right)(\M_Kf-f)+\frac12\jump{\widehat{u_0}^j\nabla \widehat{u_i}^j} f\, dS-u_{i,\sigma}^jF_{\sigma}^K(u_0^j)\M_Kf\right) \\
		 &= \sum_{K\in\mathcal{T}}\left(\frac12\sum_{\sigma\in\mathcal{E}_K\setminus\mathcal{E}^D}\int_\sigma \llbracket \widehat{u_0}^j\nabla \widehat{u_i}^j-\widehat{u_i}^j\nabla \widehat{u_0}^j\rrbracket(f-\M_K f)\,dS\right.\\
		&\phantom{=}\left.+\sum_{\substack{\sigma\in\mathcal{E}_K\setminus\mathcal{E}^N}}\int_\sigma \left(u_{i,\sigma}^j\frac{F_{\sigma}^K(u_0^j)}{\abs{\sigma}} -\widehat{u_i}^j\dc{\nabla \widehat{u_0}^j}\cdot n_{K,\sigma}\right)(f-\M_Kf)\,dS\right).
	\end{align*}
	With Hölder and the trace inequality (Theorem \ref{pFaceInterpolation}) we arrive at
	\begin{multline}\label{eqn:bound_I_3_4}
		I_3\leq C_{\text{app},2}\frac12\sum_{K\in\mathcal{T}}h_K^\frac12\left(\sum_{\sigma\in\mathcal{E}_K\setminus\mathcal{E}^D}\norm{\llbracket \widehat{u_0}^j\nabla \widehat{u_i}^j-\widehat{u_i}^j\nabla \widehat{u_0}^j\rrbracket}_\sigma\right.\\
		\left.+\sum_{\sigma\in\mathcal{E}_K\setminus\mathcal{E}^N}\norm{u_{i,\sigma}\frac{F_{\sigma}^K(u_0^j)}{\abs{\sigma}}-\widehat{u_i}^j\dc{\nabla \widehat{u_0}^j}\cdot n_{K,\sigma}}_\sigma\right)\norm{\nabla f}_K
	\end{multline}
	Combining \eqref{eqn:bound_I_1},\eqref{eqn:bound_I_2} and \eqref{eqn:bound_I_3_4} we can estimate with the discrete Hölder inequality
	\begin{align*}
		I\leq \left(R_{S,i}+R_{R,i}\right)\norm{\nabla f}_\Omega
	\end{align*}

	We can now proceed with the temporal part $II$ using Hölder's inequality
	\begin{multline*}
		\int_\Omega\left( \widehat{u_0}\nabla \widehat{u_i}-\widehat{u_i}\nabla \widehat{u_0}-\widehat{u_0}^j\nabla \widehat{u_i}^j+\widehat{u_i}^j\nabla \widehat{u_0}^j\right)\cdot\nabla f\,dx\\
		\leq \norm{\widehat{u_0}^j\nabla \widehat{u_i}^j-\widehat{u_i}^j\nabla \widehat{u_0}^j-\widehat{u_0}^{j-1}\nabla \widehat{u_i}^{j-1}+\widehat{u_i}^{j-1}\nabla \widehat{u_0}^{j-1}}_\Omega\norm{\nabla f}_\Omega
	\end{multline*}
\end{proof}
\subsection{Estimator}
We can now state a reliable a posteriori error bound for the difference $u_i-v_i$.
For this we use the stability framework from Section \ref{section:abstract_error}, the a posteriori error bounds of the diffusion equation from Section \ref{section:heat} and the residual bound from Theorem \ref{thm:Residual_bound}.
\begin{thm}\label{thm:error_estimator}
	Let Condition \ref{conditions} (C1)-(C7) hold.
	Let $(v_0,\dots,v_n)$ be a weak solution to the reduced model \eqref{eqn:reduced} similar to Theorem \ref{thm:existence} and $(\widehat{u_0},\dots,\widehat{u_n})$ the Morley type reconstruction of a finite volume solution discussed in Section \ref{section:scheme}.
	The difference $\widehat{u_i}-v_i$ for $i=1,\dots,n$ is bounded by
	\begin{multline*}
		\max_{t\in[0,T]}\norm{\widehat{u_i}-v_i}_{\Omega}^2+\norm{\sqrt{v_0}\nabla (\widehat{u_i}-v_i)}_{[0,T]\times\Omega}^2\\
		\leq \left(2\norm{\widehat{u_i}^0-v_i^0}_{\Omega}+\frac{12}\gamma\norm{\widehat{u_0}^0-v_0^0}_\Omega^2+\frac{12}\gamma\sum_{j=0}^J\tau_j\left(\left(\eta_{R,i}^j\right)^2+\left(\eta_{2}^j\right)^2\right)+\frac{12}{\gamma}\norm{\nabla \widehat{u_i}}_{[0,T]\times\Omega}^2\left(\eta_\infty^J\right)^2\right)\\
		\times\exp\left(2C_G^{\frac{2}{1-\theta}}(1+C_{F,2,\Gamma_D}^2)^{\frac{\theta}{1-\theta}}\frac{\mu}{\gamma^{\frac{1+\theta}{1-\theta}}}\norm{\nabla \widehat{u_0}}_{X(q)}^{\frac2{1-\theta}}\right).
	\end{multline*}
	with $\theta=\frac{d}{2}-\frac{d}p$ and $\mu=\frac{1-\theta}{2}\left(\frac{1}{2(1+\theta}\right)^{\frac{1+\theta}{1-\theta}}$
	\begin{align*}
		\eta_\infty^J&:=\norm{v_0^0-\widehat{u_0}^0}_{L^\infty(\Omega)}+\sum_{j=1}^J\eta_{T,\infty}^j+\sum_{j=1}^J\dot{\eta}_{S,q}^jC_{\text{Green},p}+\max_{0\leq j\leq J} \eta_{S,q}^jC_{\text{Green},p}\\
		\eta_2^j&:=\eta_{S,2}^j+\norm{\nabla\left(\widehat{u_0}^j-\widehat{u_0}^{j-1}\right)}_{\Omega}\\
		\eta_{R,i}^j &:= R_{S,i}^j+R_{T,i}^j+R_{R,i}^j,
	\end{align*}
	where $\eta^j_{T,\infty},\dot{\eta}_{S,q}^j$ were defined in Lemma \ref{lemma:ParabolicBound}, $\eta_{S,q}^j$ was defined in Lemma \ref{lemma:EllipticBound} and $R_{S,i}^j,R_{T,i}^j, R_{R,i}^j$ were defined in Theorem \ref{thm:Residual_bound}.
\end{thm}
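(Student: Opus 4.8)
The plan is to assemble the estimate from three results already at hand: the abstract stability framework for the reduced model (Theorem \ref{thm:abstract_error}), the diffusion-equation error bounds of Section \ref{section:heat} (Theorems \ref{thm:max_heat_bound} and \ref{thm:H1_heat_bound}), and the residual bound of Theorem \ref{thm:Residual_bound}. The key observation is that the Morley-type reconstruction $(\widehat{u_0},\dots,\widehat{u_n})$, together with the residuals defined at the start of Section \ref{section:residual} via $\langle R_i,f\rangle = \int_\Omega \partial_t\widehat{u_i}\,f\,dx + \int_\Omega(\widehat{u_0}\nabla\widehat{u_i}-\widehat{u_i}\nabla\widehat{u_0})\cdot\nabla f\,dx$, is by construction a weak solution of the perturbed system \eqref{eqn:simpert}. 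Since the reconstruction is a continuous piecewise polynomial that satisfies the Dirichlet data exactly under Condition (C3), Assumption \ref{ass:perturbed} holds automatically; and because $v_0^0>\gamma$ by Condition (C4), the positivity of the heat semigroup forces $\gamma<v_0$ on $[0,T]\times\Omega$, as noted before Theorem \ref{thm:abstract_error}. Hence all hypotheses of the reduced stability framework are met with $u_i=\widehat{u_i}$ and $u_0=\widehat{u_0}$.

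First I would invoke Theorem \ref{thm:abstract_error} verbatim, bounding $\max_t\|\widehat{u_i}-v_i\|_\Omega^2 + \int_0^T\|\sqrt{v_0}\nabla(\widehat{u_i}-v_i)\|_\Omega^2\,dt$ by the Gronwall prefactor times $2\|\widehat{u_i}^0-v_i^0\|_\Omega^2 + \frac{12}{\gamma}\|v_0-\widehat{u_0}\|_{L^\infty([0,T]\times\Omega)}^2\|\nabla\widehat{u_i}\|_{[0,T]\times\Omega}^2 + \frac{12}{\gamma}\|\nabla(\widehat{u_0}-v_0)\|_{[0,T]\times\Omega}^2 + \frac{12}{\gamma}\|R_i\|_{*}^2$. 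The last three of these summands still involve the exact solvent $v_0$ or the abstract residual and are not yet computable, so the bulk of the work is to replace each by its a posteriori counterpart.

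The substitution then proceeds term by term. For the $L^\infty$ factor I would apply Theorem \ref{thm:max_heat_bound}, giving $\|v_0-\widehat{u_0}\|_{L^\infty([0,T]\times\Omega)}\le\eta_\infty^J$; this is precisely where the full set of Conditions (C6) and (C7) enters, since they underpin the Green's-function estimate exploited in Lemmas \ref{lemma:EllipticBound} and \ref{lemma:ParabolicBound}. For the gradient factor I would apply Theorem \ref{thm:H1_heat_bound}, which yields $\|\nabla(\widehat{u_0}-v_0)\|_{[0,T]\times\Omega}^2 \le 2\|\widehat{u_0}^0-v_0^0\|_\Omega^2 + \sum_{j=0}^J\tau_j(\eta_2^j)^2$ with $\eta_2^j$ as in the statement. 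Finally Theorem \ref{thm:Residual_bound} gives $\|R_i(t)\|_{H^1_D(\Omega)'}\le\eta_{R,i}^j$ on each slab $[t_{j-1},t_j]$, so that $\|R_i\|_{*}^2=\int_0^T\|R_i(t)\|_{H^1_D(\Omega)'}^2\,dt\le\sum_{j=0}^J\tau_j(\eta_{R,i}^j)^2$. Inserting these three computable bounds into the stability estimate, and observing that the $\|\nabla u_0\|_{X(q)}$ in the exponent becomes the fully computable $\|\nabla\widehat{u_0}\|_{X(q)}$, produces the claimed inequality after collecting the initial-data and estimator contributions.

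Since each step is a direct application of an earlier result, there is no genuine analytic obstacle; the difficulty is essentially organizational. The one point that demands care is the verification of the stability-framework hypotheses for the reconstruction, in particular Assumption \ref{ass:perturbed} and the uniform lower bound $\gamma<v_0$, together with the correct matching of exponents and constants, most notably confirming that the quantity $\|\nabla\widehat{u_0}\|_{X(q)}^{2/(1-\theta)}$ in the Gronwall prefactor is computable from the reconstruction alone and that the three a posteriori contributions assemble into the single error functional exactly as displayed.
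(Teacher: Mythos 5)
Your proposal is correct and follows exactly the paper's own proof: invoke the reduced-model stability framework (Theorem \ref{thm:abstract_error}) with the reconstruction as the perturbed solution, then substitute the computable bounds from Theorem \ref{thm:max_heat_bound} for the $L^\infty$ factor, Theorem \ref{thm:H1_heat_bound} for the gradient term, and Theorem \ref{thm:Residual_bound} for $\norm{R_i}_{*}$. In fact you are more careful than the paper, which leaves the verification of Assumption \ref{ass:perturbed} and the lower bound $\gamma<v_0$ implicit.
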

\begin{proof}
	Theorem \ref{thm:abstract_error} yields
	\begin{multline*}
		\max_{t\in[0,T]} \norm{\widehat{u_i}-v_i}_\Omega^2+\int_0^{T}\norm{\sqrt{v_0}\nabla(\widehat{u_i}-v_i)}_\Omega^2\,dt
		\leq\\
		\left(2\norm{\widehat{u_i}^0-v_i^0}^2_{\Omega}
		+\frac{12}\gamma\norm{v_0-\widehat{u_0}}_{L^\infty([0,T]\times\Omega)}^2\norm{\nabla \widehat{u_i}}_{[0,T]\times\Omega}^2
		+\frac{12}\gamma\norm{\nabla(\widehat{u_0}-v_0)}_{[0,T]\times\Omega}^2
		+\frac{12}\gamma \norm{R_i}_{*}^2 \right)\\
		\exp\left(2C_G^{\frac{2}{1-\theta}}(1+C_{F,2,\Gamma_D}^2)^{\frac{\theta}{1-\theta}}\frac{\mu}{\gamma^{\frac{1+\theta}{1-\theta}}}\norm{\nabla \widehat{u_0}}_{X(q)}^{\frac{2}{1-\theta}}\right).
	\end{multline*}
	We now can use the estimates from Theorem \ref{thm:Residual_bound}, Theorem \ref{thm:H1_heat_bound} and Theorem \ref{thm:max_heat_bound} to infer the claim.
\end{proof}
\subsection{Error estimator for the general model}\label{subsec:general_residual}
Similar to the reduced model, given the stability result, Theorem \ref{thm:abstract_error}, we now only need bounds on the residuals $R_0,\dots,R_n$ and $R_\Phi$ in the $H^1_D(\Omega)'$ norm.
%The bounds on $R_i$ ($i=0,\dots,n$) follow similarly to the bounds for the reduced model (see Theorem \ref{thm:Residual_bound}).
First we derive a bound for the residual $R_\Phi$.
Using integration by parts and the properties of the reconstruction we can bound the residual by
\begin{equation}\label{eqn:residual_bound_phi}
\begin{aligned}
	\langle R_\Phi,f\rangle
	&= \left( \nabla \widehat{\Phi},\nabla f\right)-\frac{z}{\lambda^2}\left( 1-\widehat{u_0},f\right)\\
	&= -\sum_{K\in\mathcal{T}}\int_K\left(\frac{z}{\lambda^2}(1-\widehat{u_0})+\Delta \widehat{\Phi}\right)f\,dx-\sum_{\sigma\in\mathcal{E}}\int_\sigma\llbracket\nabla \widehat{\Phi}\rrbracket f\,dS
\end{aligned}
\end{equation}
We use the same technique as in \cite[Corollary 5.4]{nicaise2005} to arrive at
\begin{align*}
	\langle R_\Phi,f\rangle
	&= -\sum_{K\in\mathcal{T}}\int_K\left(\frac{z}{\lambda^2}(1-u_{0,h})+\Delta \widehat{\Phi}\right)(f-\M_Kf)\,dx-\sum_{\sigma\in\mathcal{E}}\int_\sigma\llbracket\nabla \widehat{\Phi}\rrbracket (f-\M_Kf)\,dS+\int_\Omega\frac{z}{\lambda^2}(\widehat{u_0}-u_{0,h})f\,dx\\
	&	\leq \left(\sqrt{2}\left(\sum_{K\in\mathcal{T}}C_{P,2}^2h_K^2\norm{\frac{z}{\lambda^2}(1-u_{0,h})+\Delta \widehat{\Phi}}_K^2+\frac{C_{\text{app},2}^2h_KN_\partial}{4}\sum_{\sigma\in\mathcal{E}_K\setminus\mathcal{E}^D} \norm{\llbracket\nabla \widehat{\Phi}\rrbracket}_\sigma^2\right)^\frac12 \right.\\
	&\left.\quad+\frac{\abs{z}}{\lambda^2}C_{F,2,\Omega}\norm{\widehat{u_0}-u_{0,h}}_\Omega\right)\norm{\nabla f}_\Omega.
\end{align*}
We also can bound the residual $R_0$ using the same technique as in Section \ref{Section:Residual_bound}
\begin{align*}
	\norm{R_0(t)}_{H^1_D(\Omega)'}
	&\leq R_{T,0}^j+R_{S,0}^j+R_{R,0}^j =:\eta_{R,0}^j\quad\forall t\in[t_{j-1},t_j]
\end{align*}
with the temporal, spacial and residual bounds given by
\begin{equation}\label{eqn:residual_bound_0}
\begin{aligned}
	R_{T,0}^j &= \norm{\nabla \widehat{u_0}^j-\widehat{u_0}^j(1-\widehat{u_0}^j)\beta z\nabla\Phi^j-\nabla \widehat{u_0}^{j-1}+\widehat{u_0}^{j-1}(1-\widehat{u_0}^{j-1})\beta z\nabla\Phi^{j-1}}_\Omega\\
	R_{S,0}^j &= \sqrt{2}\left(\sum_{K\in\mathcal{T}}\left(C_{P,2}^2h_K^2\norm{\partial_t u_{0,h}-\operatorname{div}\left(\nabla \widehat{u_0}^j-\widehat{u_0}^j(1-\widehat{u_0}^j)\beta z\nabla \widehat{\Phi}^j\right)}_K^2\right.\right.\\
	&\left.\left.+\frac{N_\partial C_{\text{app},2}^2}{4}\sum_{\sigma\in\mathcal{E}\setminus\mathcal{E}^D}h_K\norm{\llbracket \nabla \widehat{u_0}^j-\widehat{u_0}^j(1-\widehat{u_0}^j)\nabla \widehat{\Phi}^j\rrbracket}_\sigma^2\right)\right)^\frac12\\
	R_{R,0}^j &=C_{F,2,\Gamma_N}\norm{\partial_t (\widehat{u_0}-u_{0,h})}_{\Omega}\\
	&+\frac{C_{\text{app},2}N_\partial^\frac12}{2}\left(\sum_{K\in\mathcal{T}}\sum_{\sigma\in\mathcal{E}\setminus\mathcal{E}^N}h_K\norm{\widehat{u_0}^j\beta z(1-\widehat{u_0}^j)\left\{\!\!\left\{\nabla \widehat{\Phi}^j\right\}\!\!\right\}\cdot n_{K,\sigma}-u_{0,\sigma}^j\beta z\left(\sum_{i=1}^nu_{i,\sigma}^j\right)\frac{F_{\sigma}^K(\Phi^j)}{\abs{\sigma}}}_{\sigma}^2\right)^\frac12.
\end{aligned}
\end{equation}
With \eqref{equations} we notice that $\sum_{i=1}^n u_{i,\sigma}^j$ is an approximation of $1-u_0$ on the edge $\sigma$.
The $R_{0,R}^j$ in \eqref{eqn:residual_bound_0} is also consistent with \eqref{eqn:scheme_0}.
Hence, we expect that the second term in $R_{0,R}^j$ is small.\\
The residuals $R_i$ can be bounded similar to Theorem \ref{thm:Residual_bound}
\begin{align*}
	\norm{R_i(t)}_{H^1_D(\Omega)'} &\leq R_{S,i}^j+R_{T,i}^j+R_{R,i}^j =:\eta_{R,i}^j\quad\forall t\in[t_{j-1},t_j]
\end{align*}
where the spacial bound $R_{S,i}^j$, temporal bound $R_{T,i}^j$ and reconstruction bound $R_{R,i}^j$ are given by
\begin{equation}\label{eqn:residual_bound_i}
	\begin{aligned}
		R_{S,i}^j &= \sqrt{2}\left(\sum_{K\in\mathcal{T}}\left(C_{P,2}^2h_K^2\norm{\partial_t u_{i,h}-\operatorname{div}\left(\widehat{u_0}^j\nabla \widehat{u_i}^j-\widehat{u_i}^j\nabla \widehat{u_0}^j+\widehat{u_i}^j\widehat{u_0}^j\beta z\nabla \widehat{\Phi}^j\right)}_K^2\right.\right.\\
		&+\left.\left.\frac{N_\partial C_{\text{app},2}^2}{4}\sum_{\sigma\in\mathcal{E}\setminus\mathcal{E}^D}h_K\norm{\llbracket \widehat{u_0}^j\nabla \widehat{u_i}^j-\widehat{u_i}^j\nabla \widehat{u_0}^j+\widehat{u_i}^j\widehat{u_0}^j\beta z\nabla \widehat{\Phi}^j\rrbracket}_\sigma^2\right)\right)^\frac12\\
		R_{T,i}^j &= \left\|\widehat{u_0}^j\nabla \widehat{u_i}^j-\widehat{u_i}^j\nabla \widehat{u_0}^j+\widehat{u_i}^j\widehat{u_0}^j\beta z\nabla \widehat{\Phi}^j\right.\\
		&\left.-\widehat{u_0}^{j-1}\nabla \widehat{u_i}^{j-1}+\widehat{u_i}^{j-1}\nabla \widehat{u_0}^{j-1}-\widehat{u_i}^{j-1}\widehat{u_0}^{j-1}\beta z\nabla \widehat{\Phi}^{j-1}\right\|_\Omega\\
		R_{R,i}^j &=C_{P,2,\Omega}\norm{\partial_t (\widehat{u_i}-u_{i,h})}_{\Omega}\\
		&+C_{\text{app},2}N_\partial^\frac12\left(\sum_{\sigma\in\mathcal{E}\setminus\mathcal{E}^N}h_K\norm{\widehat{u_i}^j\left\{\!\!\!\left\{\nabla \widehat{u_0}^j-\beta z\widehat{u_0}^j\nabla\Phi^j\right\}\!\!\!\right\}\cdot n_{K,\sigma}-u_{i,\sigma}\frac{F_{\sigma}^K(u_0^j)-\beta z F_{\sigma}^K(\Phi^j)}{\abs{\sigma}}}_{\sigma}^2\right)^\frac12.
	\end{aligned}
\end{equation}
Notice that the residual bound has the same structure as the residual bound in Theorem \ref{thm:Residual_bound}.
The only difference is, that the convective part has changed from $-\widehat{u_i}\nabla \widehat{u_0}$ for the reduced model to $-\widehat{u_i}(\nabla \widehat{u_0}-\beta z\widehat{u_0}\nabla \Phi)$.% and the numerical approximation to the convective part.
\begin{proof}[Proof of Theorem \ref{thm:main_estimator}]
	The claims follow from Theorem \ref{thm:abstract_general} and the bounds on the residuals from \eqref{eqn:residual_bound_phi}-\eqref{eqn:residual_bound_i}.
\end{proof}
\begin{remark}\label{remark:constants}
	All the constants appearing in the estimator in Theorem \ref{thm:main_estimator} except for $\gamma$ are computable for convex polygonal domains.
	The following bound on the Poincar\'e-Friedrichs constant $C_{F,2,\Gamma_D}$ for convex domains can be found in \cite{pauly2020}.
	\begin{align*}
		C_{P,2} \leq C_{F,2,\Gamma_D} \leq \frac{\operatorname{diam}(\Omega)}{\pi}.
	\end{align*}
	A bound on the Sobolev constant $C_S$ for a finite union of convex domains can be found in \cite{mizuguchi2017}.
	A bound for the constant in the Gagliardo-Nirenberg inequality used here can be found in Theorem \ref{thm:Nirenberg}.
\end{remark}
\section{Numerical Results}\label{section:numerics}
We now show the results of numerical experiments to validate the optimal scaling behavior of the error estimators from Theorem \ref{thm:error_estimator} and Theorem \ref{thm:main_estimator}.
The implementation is done in Python.
We define the estimated order of convergence (EOC) of two sequences $a(i)$ and $h(i)$ by
\begin{align*}
	\text{EOC}(a,h;i):=\frac{\log(a(i+1)/a(i))}{\log(h(i+1)/h(i))}.
\end{align*}
We also look at the effectivity index
\begin{align*}
	\operatorname{EI}_i &:=
	\begin{dcases}
		\frac{\sqrt{\max_{t\in[0,T]}\norm{v_0(t)-\widehat{u_0}(t)}_{\Omega}^2+\norm{\nabla(v_0-\widehat{u_0})}_{[0,T]\times\Omega}^2}}{\eta_2^J} &\text{for } i=0,\\
		\frac{\sqrt{\max_{t\in[0,T]}\norm{v_i(t)-\widehat{u_i}(t)}_{\Omega}^2+\norm{\sqrt{v_0}\nabla(v_i-\widehat{u_i})}_{[0,T]\times\Omega}^2}}{\eta_2^{i,J}} &\text{for } i=1,\dots,n.
	\end{dcases}\\
	 \operatorname{EI}_\Phi &:=\frac{\norm{\nabla (\widehat{\Phi}-\Psi)}_{[0,T]\times\Omega}}{\eta_\Phi^J}
\end{align*}
where $\eta_2^{i,J}$ and $\eta_2^J$ are the estimators defined in Theorem \ref{thm:error_estimator} for the reduced model and from Theorem \ref{thm:main_estimator}.
\subsection{The reduced model}
In what follows, we choose $\Omega=[0,1]^2$ with Dirichlet boundary $\Gamma_D:=\{0,1\}\times[0,1]$ and Neumann boundary $\Gamma_N:=(0,1)\times\{0,1\}$.
Furthermore, we set $T=1$ and consider the time intervall $[0,1]$.
%We prescribe the solution of the reduced model \eqref{eqn:reduced} by
We use
\begin{align*}
	v_1(t,x,y) &= 0.1+0.1x+tx(1-x)e^{-20x^2}\\
	v_2(t,x,y) &= 0.1-0.05x-0.5(1-\cos(t))x(1-x)\sin(x)\\
	v_3(t,x,y) &= 0.2+0.1x+tx(1-x)e^{-20(x-0.5)^2}.
\end{align*}
as a manufacured soluion to \eqref{eqn:reduced}.
With initial and boundary condition chosen accordingly.
With this choice, we arrive at $\gamma=0.3$.
We here fix $q=42$ and $\tilde{q}=2.1$ and the other constants are then given by
\begin{align}\label{eqn:constants}
	\mu\leq 1.08,\quad
	C_G\leq 1.02,\quad
	C_S\leq 12.02
\end{align}
with Theorem \ref{thm:Nirenberg} and \cite[Table 2]{mizuguchi2017} and the Poincar\'e and Poincar\'e-Friedrichs constant given in Remark \ref{remark:constants}.
For the maximum norm estimator $\eta_\infty^J$ from Theorem \ref{thm:max_heat_bound} we set $p^*=1$ and $C_{\text{Green},1}=1$, since this constant is not accessible (see Remark \ref{remark:ass}).
A sequence of approximate solutions is obtained from the scheme and reconstruction described in Section \ref{section:scheme} using mesh width $h=2^{-i-1}$ and time step size $\tau=2^{-i-1}$ for $i=0,\dots,5$.\\
In Table \ref{reduced_eoc_table} we see a linear convergence of the estimators $\eta_2^J,\eta_\infty^J$ and $\eta_2^{1,J}$.
The estimators for the other species are similar to $\eta_2^{1,J}$ and are therefore omitted.
The effectivity indices indicate that the estimator $\eta_2^{1,J}$ overestimates the error by a large factor but the estimator scales with the same order as the true error.
\begin{table}[h]
	\centering
\begin{tabular}{|l|l|l|l|l|l|l|l|l|}
	\hline
	i& $\eta_2^J$&EOC &$\eta_\infty^J$&EOC&$\eta_2^{1,J}$&EOC&$\operatorname{EI}_0$&$\operatorname{EI}_1$\\\hline
	0&1.458&1.1&18.14&0.98&118.1&1.2&0.16&0.00076\\
	1&0.6916&1.0&9.194&0.53&51.55&0.5&0.15&0.00084\\
	2&0.337&1.0&6.359&0.79&36.36&0.68&0.15&0.00055\\
	3&0.1655&1.0&3.669&0.96&22.67&0.91&0.15&0.00043\\
	4&0.0819&1.0&1.891&0.99&12.09&0.98&0.15&0.0004\\
	5&0.04072&-&0.9507&-&6.148&-&0.15&0.00039\\
	\hline
\end{tabular}
\caption{Estimators, EOC and effectivity index for the reduced model.}
\label{reduced_eoc_table}
\end{table}
\subsection{The general model}
In what follows, we choose $\Omega=[0,1]^2$ with Dirichlet boundary $\Gamma_D:=\{0,1\}\times[0,1]$ and Neumann boundary $\Gamma_N:=(0,1)\times\{0,1\}$.
%We fix the solution of the general model \eqref{eqn:general} with constants $\beta=z=\lambda=1$
We use
\begin{align*}
	v_1(t,x,y)&:=0.1+t^2\theta(y)x(1-x)\exp(-100(x-0.5)^2)\\
	v_2(t,x,y)&:=0.2x+0.1(1-x)-\frac12\sin(t)\theta(y))x(1-x)\cos(x)\\
	v_3(t,x,y)&:=0.2-0.1x-0.55tx(1-x)\theta(y)\exp(-100(x-0.5)^2)\\
	\Psi(t,x,y)&:=\frac{25}{3}(x-1)x(5x^2-2.6-2.4+t(x^2-x))\theta(y)
\end{align*}
with
\begin{align*}
	\theta(y) := \frac{y^4}3-2\frac{y^3}3+\frac{y^2}{3}+\frac{47}{48}
\end{align*}
as a manufacured solution to \eqref{eqn:general} with constants $\beta=z=\lambda=1$.
We again choose $q=42$ and $\tilde{q}=2.1$ and the same constants as in \eqref{eqn:constants}.
A sequence of approximate solutions is obtained again by using the mesh width $h=2^{-i-1}$ and time step size $\tau=2^{-i-1}$ for $i=0,\dots,5$.
In Table \ref{table:eoc_general_model} we see a linear convergence of the estimators $\eta_2^J,\eta_2^{1,J}$ and $\eta_\Phi^{J}$.
The estimators for the other species are similar to $\eta_2^{1,J}$ and are therefore omitted.
We see that the estimators $\eta_2^J,\eta_2^{1,J}$ for the general model are much larger than the estimator for the reduced model (see Table \ref{reduced_eoc_table}).
One reason for this is that due to the choice of example the norm appearing in the exponential term is much larger, i.e. $\norm{F}_{X(q)}^{\frac{2}{1-\theta}}\approx 4.7$ for the general model compared to $\norm{\nabla \widehat{u_0}}_{X(q)}^{\frac{2}{1-\theta}}\approx 0.4$ for the reduced model.
Furthermore, the maximum norm estimator for the solvent concentration in the reduced model enables us to use the $L^2([0,T]\times\Omega)$ norm of $\nabla \widehat{u_1}$ instead of the $L^\infty(0,T;L^q(\Omega))$ norm in the general model, also contributing to the smaller effectivity index in the second experiment.
The estimator of the error of solvent concentration $\eta_2^J$ for the general model has an additional exponential term that is large for this example.
\begin{table}[h]
	\centering
\begin{tabular}{|l|l|l|l|l|l|l|l|l|l|}
	\hline
	i& $\eta_2^J$&EOC &$\eta_2^{1,J}$&EOC&$\eta_\Phi^{J}$&EOC&$\operatorname{EI}_0$&$\operatorname{EI}_1$&$\operatorname{EI}_\Phi$\\\hline
	0&2.667e+07&-1.2&3.811e+11&-1.8&8.489e+06&-1.2&5.8e-09&8.7e-13&8.9e-08\\
	1&6.157e+07&0.46&1.292e+12&0.3&1.96e+07&0.46&1.4e-09&1.5e-13&1.9e-08\\
	2&4.484e+07&0.9&1.047e+12&0.86&1.427e+07&0.9&9e-10&8.3e-14&1.2e-08\\
	3&2.408e+07&0.98&5.784e+11&0.97&7.666e+06&0.98&8.1e-10&7.3e-14&1.1e-08\\
	4&1.218e+07&1.0&2.946e+11&1.0&3.878e+06&1.0&7.9e-10&7.1e-14&1.1e-08\\
	5&6.091e+06&-&1.475e+11&-&1.939e+06&-&7.9e-10&7e-14&1.1e-08\\
	\hline
\end{tabular}
\caption{Estimators, EOC and effectivity index for the general model.}
\label{table:eoc_general_model}
\end{table}\\
%The code for the experiments in this section is provided in a git-repository at \.url{https://git-ce.rwth-aachen.de/arne.berrens/a-posteriori-error-control-for-a-finite-volume-scheme-for-a-cross-diffusion-model-of-ion-transport}.\\
We see that the error estimators for the reduced model and the general model converge linearly.
Since the reconstruction presented in Section \ref{section:scheme} is mostly made up of piecewise linear polynomials and the error estimates bound a (weighted) $L^2(0,T;L^2(\Omega))$-norm of the difference of the gradients of the reconstruction and the weak solution, the linear convergence of the error estimators is expected.
	The research code associated with this article are available in the git-repository \url{https://git-ce.rwth-aachen.de/arne.berrens/a-posteriori-error-control-for-a-finite-volume-scheme-for-a-cross-diffusion-model-of-ion-transport}\cite{berrens_repo}.
	\paragraph{Funding}
	The research of JG was supported by the Deutsche Forschungsgemeinschaft (DFG, German Research Foundation) - SPP 2410 Hyperbolic Balance Laws in Fluid Mechanics: Complexity, Scales, Randomness (CoScaRa) within the project 525877563 (A posteriori error estimators for statistical solutions of barotropic Navier-Stokes equations). JG also acknowledges support by the German Science Foundation (DFG) via grant TRR 154 (Mathematical modelling, simulation and optimization using the example of gas networks), sub-project C05 (Project 239904186).
	\printbibliography[heading=bibintoc]
\end{document}